\documentclass[12pt,a4paper,onecolumn]{article}

\title{Nonhomogeneous boundary condition for spectral non-local operators}

\author{Ivan Bio\v{c}i\'{c} and Vanja Wagner}
\date{}

\usepackage[utf8]{inputenc}
\usepackage{fullpage}
\usepackage{amsmath}
\usepackage{amsthm}
\usepackage{amsfonts}
\usepackage{mathtools}
\usepackage[dvipsnames]{xcolor} 
\usepackage{amssymb} 
\usepackage{enumerate,enumitem}
\usepackage{comment,verbatim}
\usepackage{hyperref}
\usepackage{aligned-overset}
\usepackage{yfonts,dsfont}
\usepackage{cancel,xcolor,soul} 

\usepackage{hyperref}
\hypersetup{
	colorlinks=true,
	linkcolor=magenta,
}
\DeclareMathOperator\supp{supp}

\newtheorem{thm}{Theorem}[section]
\newtheorem{prop}[thm]{Proposition}
\newtheorem{cor}[thm]{Corollary}
\newtheorem{defn}[thm]{Definition}
\newtheorem{lem}[thm]{Lemma}
\theoremstyle{definition}
\newtheorem{rem}[thm]{Remark}

\newtheorem*{assumption*}{\assumptionnumber}
\providecommand{\assumptionnumber}{}
\makeatletter
\newenvironment{assumption}[2]
{%
	\renewcommand{\assumptionnumber}{(\textbf{#1#2})}%
	\begin{assumption*}%
		\protected@edef\@currentlabel{(\textbf{#1#2})}%
	}
	{%
	\end{assumption*}
}
\makeatother

\newcommand{\R}{\mathbb{R}}
\newcommand{\N}{\mathbb{N}}
\newcommand{\p}{\mathds{P}}
\newcommand{\ex}{\mathds{E}}
\newcommand{\G}{\mathds{G}}
\newcommand{\subsub}{\subset\subset}
\newcommand{\1}{\mathbf 1}
\newcommand{\de}{\delta_D}
\newcommand{\wh}{\widehat}
\newcommand{\wt}{\widetilde}
\newcommand{\la}{\langle}
\newcommand{\ra}{\rangle}
\newcommand{\ua}[1]{\underline{a}_{#1}}
\newcommand{\oa}[1]{\overline{a}_{#1}}
\newcommand{\ud}[1]{\underline{\delta}_{#1}}
\newcommand{\od}[1]{\overline{\delta}_{#1}}

\newcommand{\LLL}{L^1(D,V(\de(x))dx)}
\newcommand{\BB}{\mathcal{B}}
\newcommand{\EE}{\mathcal{E}}

\newcommand{\MM}{\mathcal{M}}

\newcommand{\KK}{\mathcal{K}}
\newcommand{\DD}{\mathcal{D}}

\newcommand{\calL}{\mathcal{L}^1}

\newcommand{\Lbase}{L_{|D}}
\newcommand{\Lo}{\psi(-L_{|D})}
\newcommand{\Los}{\psi^*(-L_{|D})}
\newcommand{\LoInv}{\psi(-L_{|D})^{-1}}

\newcommand{\INTFR}{\left((-\Delta)^{\beta/2}_{|D}\right)^{\alpha/2}}

\newcommand{\GDP}{G_D^\psi}
\newcommand{\GDPs}{G_D^{\psi^*}}

\newcommand{\PDS}{P^{\psi}_{ D}}

\newcommand{\GD}{G_{ D}}

\newcommand{\diam}{\textrm{diam}}
\newcommand{\dist}{\textrm{dist}}
\newcommand{\uu}{\textswab{u}}

\numberwithin{equation}{section}

\begin{document}
	\maketitle

    \begin{abstract}
    We study semilinear non-local elliptic problems driven by spectral-type operators of the form $\psi(-L_{|D})$ in a bounded $C^{1,1}$ domain $D\subset \R^d$ with a nonhomogeneous boundary condition. Here $\psi$ is a Bernstein function satisfying a weak scaling condition at infinity, and $L_{|D}$ is the generator of a killed L\'evy process. This general framework covers and extends the theory of the interpolated fractional Laplacian. A key novelty in this setting is the analysis of the nonhomogeneous boundary condition formulated in terms of the Poisson potential with respect to the $d-1$ Hausdorff measure on $\partial D$. We establish sharp boundary estimates for Green and Poisson potentials, introduce a weak $L^1$ trace-like boundary operator, and provide existence results for solutions under quite general nonlinearities, including sign-changing and non-monotone cases. The methodology combines stochastic process techniques, potential theory, and spectral analysis, and expresses the boundary behavior of the solution in terms of the renewal function and the distance to the boundary, suggesting a possible unified treatment of semilinear boundary problems in non-local settings.
\end{abstract}

\bigskip\noindent
{\bf AMS 2020 Mathematics Subject Classification}: Primary: 35J61, 35R09, 35R11; Secondary: 35D30, 31B05, 35B40, 60J76

\bigskip\noindent
{\bf Keywords and phrases}: semilinear integro-differential equations, non-local operators, boundary behaviour, subordinate killed L\'evy process
	
	\section{Introduction}
	In this article, we solve the nonhomogeneous non-local semilinear problem of the elliptic type in a bounded $C^{1,1}$ domain $D\subset \R^d$:
    \begin{align}
			\begin{array}{rcll}\label{intro-eqn}
				\Lo u(x)&=&f(x,u(x)),&\textrm{in $D$},\\
				\frac{u}{\PDS\sigma}&=&\zeta,&\textrm{on $\partial D$}.
			\end{array}
	\end{align}
    Here $\psi$ is a Bernstein function satisfying a certain weak scaling condition, the operator $\Lbase$ is a non-local operator which is a generator of the L\'evy process killed upon exiting the domain $D$, while the function $\PDS \sigma$ is the reference function for the behaviour of the solution at the boundary.

    The best-known example of the operator covered by our setting is the so-called interpolated fractional Laplacian $\Lo=\INTFR$ where $\psi(\lambda)=\lambda^{\alpha/2}$, $\alpha\in (0,2)$, and where $\Lbase=-(-\Delta)_{|D}^{\beta/2}$, $\beta\in (0,2)$, is the restricted $\frac\beta2$-fractional Laplacian. However, our setting covers a much wider class of operators, including the fractional relativistic operators, various tempered operators, etc. For more details, we refer the reader to the condition \ref{as:WSC} and the discussion in the paragraph below it.

    The goal of this article is to take a step beyond the fractional framework and extend results to more general non-local operators, while at the same time covering the fractional case. The novelty of this article is the nonhomogeneous boundary theory in a quite broad generality, extending the interpolated fractional setting: the function $\psi$ is a complete Bernstein function that satisfies a weak scaling condition at infinity, and the operator $\Lbase$ is a generator of a killed subordinate Brownian motion, i.e.~$\Lbase=-\phi(-\Delta)_{|D}$, where the functions $\phi$ also satisfies the weak scaling condition at infinity. Moreover, even in the interpolated fractional Laplacian case, our results on the existence of solutions to the semilinear problem with superlinear functions $f$ are new.

    It is a non-local phenomenon that even the solutions to the linear equations blow up at the boundary - a behaviour which is also exhibited in our setting. By establishing the boundary behaviour of Green and Poisson potentials, we arrive at the abstract boundary condition $\frac{u}{\PDS\sigma}=\zeta$, which can be considered in a pointwise sense for sufficiently regular solutions, as a limit at the boundary, while in general it can be viewed as a weak $L^1$  trace-like boundary operator. Here $\PDS (x,z)$ is the Poisson kernel of $\Lo$ (i.e.~the generalized normal derivative of the Green function of $\Lo$) which is the cornerstone of the integral representation of harmonic functions with respect to $\Lo$, while $\PDS\sigma$ denotes the Poisson potential of the $(d-1)$ dimensional Hausdorff measure on $\partial D$ -- the function that describes the rate at which harmonic functions blow-up at the boundary.

    In this setting, we assume that the nonlinearity $f:D\times \R\to \R$ satisfies $|f(x,t)|\le q(x)\Lambda(t)$, for a locally bounded function $q$ and a non-decreasing function $\Lambda$. This allows us to deal with positive, negative, sign-changing, monotone, and non-monotone nonlinearities. The main goal of this article is to show the existence and continuity properties of solutions to \eqref{intro-eqn} for this wide class of nonlinearities.

    The operator $\Lo$ can be written in its spectral form
    \begin{align*}
        \Lo u=\sum_{j=1}^\infty  \psi(\lambda_j)\wh u_j \varphi_j,
    \end{align*}
    where $(\lambda_j,\varphi_j)$ are the eigenpairs of $\Lbase= - \phi(-\Delta)_{|D}$. However, this definition requires highly integrable data ($L^2$ theory), so it is less restrictive to look at the operator in its principal value form:
    \begin{align*}
        \Lo u(x)= \textrm{P.V.}\int_D\big(u(y)-u(x)\big)J_D(x,y)dy+\kappa(x)u(x).
    \end{align*}
    Here, the jumping kernel exhibits the interior behaviour of the same order as the kernel $j_{\psi\circ\phi}$ of the full operator $\psi(-L)=-\psi\circ\phi(-\Delta)$. On the other hand, the jumping kernel $J_D$ always continuously vanishes at the boundary, with possible different types of nonlinear orders depending on the relationship of $\phi$ and $\psi$, see Remark \ref{r:Lo pointwise}. To illustrate this behaviour in the classical case, note that the jumping kernel of the restricted fractional Laplacian $-(-\Delta)_{|D}^{ \alpha/2}$ does not continuously vanish at the boundary, while the jumping kernel of the spectral fractional Laplacian $-(-\Delta_{|D})^{\alpha/2}$ vanishes at the boundary with a linear order. While the jumping kernel of the interpolated fractional Laplacian behaves like the kernel of the restricted (and spectral) fractional Laplacian in the interior, it decays at the boundary with a fractional order. This mixed jumps structure makes the interpolated non-local operators of spectral type particularly relevant for applications in fractional modeling. 

       \subsection*{State-of-the-art}

    In recent years, significant progress has been made in semilinear elliptic problems involving non-local operators. The research mostly focuses on fractional-type operators and their generalizations in various directions: for existence results in fractional settings see e.g. \cite{Aba15a,BC17,BCBF18,bogdan_et_al_19,CFQ,chen_veron,dhifli2012,F20,FQ12}, for more general settings see e.g. \cite{BVW-na21,BJ20,Bio23cpaa,HuynhNguyen2022_new, KlimsiakRozkosz25}; for regularity see e.g. \cite{BMS-boundaryreg-23,grubb2016,Ros-OtonSerra2016}; for very large solutions see e.g. \cite{Aba17,BCBF16}; for potential-theoretic aspects see e.g. \cite{ksv_minimal2016,ksv2020boundary,Bio21jmaa,SongVondracek2006potenSpecial}; and some other connected research \cite{FJ23,SV2014,CKSV22,BW25,CVW25}.

    In what follows, we briefly present the principal references that are relevant for our work. In \cite{CGcV-jfa21} Chan, G\'omez-Castro and V\'azqez  gave a unified approach to the large eigenvalue problem (i.e.~for the source term $f(x,t)=\lambda t$) with nonhomogeneous boundary condition for a wide class of fractional-type operators in a domain. The framework included the restricted fractional Laplacian and the interpolated fractional Laplacian as examples, as well as a certain class of Schr\"odinger operators. Their approach assumed sharp bounds and controlled behavior of the Green function (in terms of fractional powers), both in the interior and on the boundary, as well as the existence of the Poisson/Martin kernel (the hardest assumption to verify, see \cite[Remark 2.5]{CGcV-jfa21}), from which they develop integrability theory and obtain large eigenvalues. In our approach, by finding the sharp behaviour of the Green potential  and proving the non-trivial result on the existence of the Poisson kernel, we are able to extend part of their results to  a wider class of interpolated non-local operators and non-linear source terms. Furthermore, the preparatory work has been done which should enable us to move to Schr\"odinger type problems in future work.  

    In the work by Huynh and Nguyen \cite{HuynhNguyen2022_new}, similarly to \cite{CGcV-jfa21}, the authors gave a unified approach to homogeneous semilinear problems for fractional-type operators (including the interpolated fractional Laplacian), again starting from the assumption on the sharp behaviour of the Green function. Although their source term $f$ is a quite general non-decreasing function, unlike \cite{CGcV-jfa21}, the authors consider only the semilinear problems with homogeneous boundary conditions. This simplifies the theory, since there is no involvement of the Poisson potentials, i.e., harmonic functions. On the other hand, our focused approach covers more general nonlinearities, as well as the nonhomogeneous boundary condition. Moreover, by using the general Dirichlet form theory instead of the fractional-type results, we are able to extend Kato's inequality from \cite{HuynhNguyen2022_new} to our setting. 
    
    The influential work by Bonforte, Figalli and V\'azquez \cite{BFV-cvPDE18} also employed a unified approach within the fractional framework, including the case of the interpolated fractional Laplacian. Their results addressed the homogeneous boundary condition, with a power-like nonlinearity $f(x,t)\sim t^p$. Furthermore, they studied H\"older regularity of solutions and obtained qualitative boundary estimates, as well as the change of the regularity behaviour in different regimes with respect to the order of the nonlinearity. It is worth noting that obtaining exact regularity up to the boundary, even in the setting of the interpolated fractional Laplacian, would be of significant interest. This is because the standard tool for establishing such results is the corresponding boundary Harnack principle, which in this setting does not have to hold, see \cite{ksv2020boundary}.
    
    Another quite general approach to semilinear non-local problems was introduced by Klimsiak and Rozkosz in \cite{KlimsiakRozkosz25}, where they used a probabilistic (martingale) method, and orthogonal projections to solve general semilinear problems in locally compact separable spaces. Although they cover a large class of operators, without the knowledge of the structure of harmonic functions with respect to the operator, their unified theory covers only the homogeneous boundary condition $\zeta=0$ in \eqref{intro-eqn}.
    
    A general approach to semilinear problems, covering a wide range of elliptic and parabolic differential operators of second order, as well as integro-differential operators, using balayage techniques has been presented recently by Bogdan and Hansen in \cite{BogdanHansen}, where they focus on the existence of positive solutions with increasing nonlinearities. Although they cover a wide range of existence results, our focus here lies more on fine properties of the explosion rates of solutions, even without the monotonicity assumption.

    To develop our theory for the operator $\Lo$ in this generality, several key preparatory results were essential. First, we emphasize the work of Biswas and L\"orinczi \cite{BL-na21} where they proved Hopf's lemma for $\Lbase$, providing the sharp boundary behaviour of the first eigenfunction which drives the behaviour of many phenomena. Furthermore, in \cite{KKLL-jfa19}, the authors established the generalized H"older regularity up to the boundary for the Green potentials of $\Lbase$, while in \cite{ksv_minimal2016}, Kim, Song, and Vondraček investigated potential-theoretic properties of the process generated by $\Lo$.

    Our approach to studying semilinear problems is motivated by the technique used for the spectral fractional Laplacian in \cite{AbaDup-Nonhomog2017}, and for more general non-local spectral-type operators in \cite{Bio23cpaa}. Moreover, the techniques for obtaining boundary behavior of Green and Poisson potentials in this generality are adapted from \cite{BVW-na21}, where the semilinear problem was studied for operators more general than the restricted fractional Laplacian. 
    We also note \cite{Bio21jmaa}, where the representation of harmonic functions for $\Lbase = -\phi(-\Delta)_{|D}$ was studied. The results from that work play a crucial role in our analysis, as we rely on them to establish the integral representation of harmonic functions with respect to $\Lo$, as well as to demonstrate the equivalence between the analytic and probabilistic notions of harmonicity.
    \subsection*{Main results}
        To make the exposition clearer, here we state claims only for the special case of the interpolated fractional Laplacian $\Lo=\INTFR$.
        
        \noindent\textbf{Semilinear problems:} The main focus of this article is the existence and uniqueness of weak-dual solutions to semilinear problems in Section \ref{s:semilinear}. The cornerstone result is a method of sub/supersolutions for the homogeneous problem (Theorem \ref{t:super-subsol}). 
        To investigate the nonhomogeneous problem for non-positive nonlinearities, we use a potential-theoretic approach of harmonic function approximation by an increasing sequence of Green potentials, see Theorem \ref{t:non-positive semilinear problem}. In the fractional setting, Theorem \ref{t:frac-semi-neg} shows that for $f(x,t)=-\de(x)^\theta|t|^p$, we can find the solution to the semilinear problem \eqref{intro-eqn} if and only if $p<\frac{1+\frac{2\theta}{2+\beta}}{1-\frac{\beta\alpha}{2+\beta}}$. In order to apply this potential-theoretic approach, we established the required representation of harmonic functions and the equivalence with the probabilistic notion of harmonicity, see Theorem \ref{t:1627}. Furthermore, the nonhomogeneous problem with non-negative nonlinearities was investigated in its full generality in Theorem \ref{t:semilin non-negative monotone linearity}. This result, translated into the fractional setting in Theorem \ref{t:frac-semi-pos}, for $f(x,t)=m\de(x)^\theta|t|^p$, implies the existence of a solution if and only if $p<\frac{1+\frac{2\theta}{2+\beta}}{1-\frac{\beta\alpha}{2+\beta}}$ for small enough $m>0$. The general nonhomogeneous problem with a signed nonlinearity was considered in Theorem \ref{t:semilinear signed data}, with its fractional  analogue given in Theorem \ref{t:frac-semi-sign}.

        \noindent\textbf{The boundary condition:} 
        In order to state the nonhomogeneous semilinear problem, it was essential to show that the Poisson kernel  $\PDS(x,z)$, given as the generalised normal derivative of the Green function $-\partial_V\GDP(x,z)$, is well-defined, see Proposition \ref{p:Poisson kernel}. Such results are case-specific and highly non-trivial, and generally represent one of the assumptions in recent works applying a unified approach (e.g. \cite{CGcV-jfa21,AGCV19}). The boundary condition is then given in terms of the reference function  $\PDS \sigma$, whose boundary behaviour is expressed in Proposition \ref{p:poisson bndry and finiteness} via the corresponding renewal function $V$ and distance to the boundary $\de$. 
        
        Furthermore, we provide the optimal weighted $L^1$ space, given in terms of the renewal function $V$, for the distributional formulation of the semilinear problem \eqref{intro-eqn}, see Subsection \ref{ss:homo-sol}, \eqref{eq:dist-weight-rho} and Definition \ref{r:defn of distributional solution}. The aforementioned results, together with the analysis of the weak boundary behaviour of general Poisson and Green potentials in Propositions \ref{p:bnd-bhv-PDSzeta} and \ref{p:boundary operator GD lambda}, enabled us to formulate the boundary condition in \eqref{intro-eqn} in terms of the weak $L^1$ boundary trace-like operator.    

    \paragraph*{Organization of the article and notation.}
    In Section \ref{s:prelim}, we present preliminary, mostly well-known, results on transition kernels, operators, and Green functions. Section \ref{s:operator} is concerned with the operator $\Lo$ in the $L^2$ setting -- its basic properties, the regularity of corresponding eigenvalues, and its principal value integral form. Section \ref{s:Green} deals with Green potentials and the optimal weighted space for studying distributional solutions to $\Lo u=f$. In Section \ref{s:Poisson harmonic} we studied the Poisson kernel, its potentials and their estimates, and the representation of harmonic functions with respect to $\Lo$. Section \ref{s:boundary} covers the weak boundary behaviour of potentials, as an introduction to the weak $L^1$ boundary trace-like operator. In Section \ref{s:linear problem} we give formal definitions of different notions of solutions to the nonhomogeneous problem and obtain Kato's inequality. Section \ref{s:semilinear} consists of the main existence results on semilinear problems. The article features an Appendix, with proofs of certain auxiliary claims based on already known techniques in the literature.
    	
	\noindent{\bf Notation:} For a continuous non-decreasing function $f:[0,\infty)\to [0,\infty)$ with $f(0)=0$, the space $C^f(D)$ denotes all functions $u:D\to \R$ with the finite norm
		\begin{align*}
			\|u\|_{C^f(D)}\coloneqq \| u\|_{L^\infty(D)}+\sup_{x,y\in D}\frac{|u(x)-u(y)|}{f(|x-y|)}<\infty.
		\end{align*}
		The space $C_0(D)$ denotes continuous functions in $D$ continuously vanishing at the boundary, $C_c(D)$ denotes those with a compact support, and $C_b(D)$ denotes continuous functions bounded on $D$. The space $C^\gamma(D)$ denotes locally H\"older continuous functions with the exponent $\gamma>0$, while the corresponding norm is given by $\|u\|_{C^f(D)}$ for $f(r)=r^\gamma$. The space $C^{1,1}(D)$ denotes differentiable functions on $D$ whose derivatives are locally Lipschitz. The space $C^k(D)$ denotes $k$-times differentiable functions in $D$. The spaces $L^2(D)$ and $L^\infty(D)$ denote usual Lebesgue spaces on $D$, where we write e.g. $L^2(D,\mu)$ if we use measure $\mu$ instead of the Lebesgue measure.
        
        The notation $\la u,v\ra$ denotes the scalar product in $L^2(D)$. By $B(x,r)$ we denote the ball centered at $x\in \R^d$ with a radius $r>0$, and the Euclidean norm by $|x|$. The function $\de(x)=\inf\{|x-z|:z\notin D\}$ denotes the distance to $D^c$ of a point $x$ (i.e.~distance to the boundary if $x\in D$). For a set $A$, the notation $\overline A$ denotes its closure, while $U\subsub D$ means that $U$ is compactly contained in $D$.
		
        For two non-negative functions $f$ and $g$, $f \lesssim g$ in $D$ means that there exists $c>0$ such that $f(x)\le c\,g(x)$, $x\in D$, while $f\asymp g$ means $f\lesssim g$ and $g\lesssim f$. The constants are denoted by small letters $c$, $c_1$, $c_2$, and may change their value from line to line. More important constants are denoted by a big letter $C$, where e.g. $C(a,b)$ means that the constant depends on the values of $a$ and $b$. All functions considered in the article are assumed to be Borel measurable.
    
	\section{Preliminaries}\label{s:prelim}
	
	Let $W=(W_t)_{t\ge 0}$ be a Brownian motion in n $\R^d$, $d\ge2$, with the characteristic exponent $\xi\mapsto |\xi|^2$. Let $S=(S_t)_{t\ge 0}$ be a subordinator, independent of $W$, with the Laplace exponent $\phi$, i.e.~$S$ is a non-negative (i.e.~non-decreasing) L\'evy process such that
    \begin{align*}
        \ex\left[e^{-\lambda S_t}\right]&=e^{-\phi(\lambda)t},\quad \lambda\ge 0,\,t\ge 0.
    \end{align*}
    The exponent $\phi$ is a Bernstein function of the form
    \begin{align*}
        \phi(\lambda)&=b\lambda +\int_0^\infty (1-e^{-\lambda t})\mu(dt),\quad \lambda\ge 0,
    \end{align*}
    and completely determines the distribution of the subordinator $S$, for details see \cite{bernstein}. The parameter $b\ge 0$ is called the drift and the non-negative measure $\mu$, satisfying the condition $\int_0^\infty (1\wedge t)\mu(dt)<\infty$, is called the L\'evy measure of the subordinator.

    The process $X=(X_t)_{t\ge 0}$ defined by $X_t=W_{S_t}$, called the subordinate Brownian motion, is a L\'evy process with the characteristic exponent $\xi \mapsto \phi(|\xi|^2)$. Throughout the article, we will assume that $b=0$, i.e. $S$ is a driftless subordinator, which implies that $X$ is a pure-jump L\'evy process. Mo rover, we assume:
    \begin{assumption}{A}{1}\label{as:WSC}
        The exponent $\phi$ is a complete Bernstein functions, i.e.~the L\'evy measure $\mu$ has a completely monotone density denoted by $\mu(t)$, and $\phi$ satisfies the weak scaling condition at infinity: there exist $\ua{\phi},\oa{\phi}>0$ and $0< \ud{\phi}\le \od{\phi}<1$ such that
        \begin{align}\label{eq:WSC}
            \ua{\phi} \lambda^{\ud{\phi}} \phi(t)\le \phi(\lambda t)\le \oa{\phi}
            \lambda^{\od{\phi}}\phi(t),\quad \lambda\ge 1, \, t\ge 1.
        \end{align}
    \end{assumption}
    This assumption drives the small space-time behaviour of $X$ and is often one of the standard assumptions in many state-of-the-art articles dealing with the potential theoretical properties of L\'evy processes. We note that the main example of a process for which \ref{as:WSC} holds is the isotropic $\alpha$-stable process, i.e.~$\phi(\lambda)=\lambda^{\alpha/2}$, for some $\alpha\in (0,2)$, which satisfies exact scaling at both infinity and zero. Other examples of such subordinators, such as the relativistic stable, or the tempered stable, and others, can be found in \cite[p. 409]{KSV_bhpinf-PA14} and \cite[p. 41]{KSV_heat-PA18}.

    The main analytical object in our study is the operator $\Lo$, which we introduce in Section \ref{s:operator}. The underlying stochastic process $Y$ is obtained by appropriate killing and subordination of the Brownian motion. First, we recall the notion of a stochastic process killed upon exiting a set. Let $D\subset \R^d$ be an open set, and define $\tau_D\coloneqq \inf\{t>0:X_t\in D^c\}$ the first exit time of $X$ from $D$. The killed process $X^D=(X^D_t)_{t\ge 0}$ of $X$ is defined by
    \begin{align*}
        X^D_t\coloneqq \begin{cases}
            X_t, &t<\tau_D\\
            \partial, &t\ge \tau_D,
        \end{cases}
    \end{align*}
    where $\partial$ is called the cemetery and it is a point added to $\R^d$. 

    Let $T=(T_t)_{t\ge t}$ be another subordinator independent of $W$ and $S$ 
    (and thus of $X$ and $X^D$), with the Laplace exponent $\psi$ of the form 
     \begin{align*}
        \psi(\lambda)&=\int_0^\infty (1-e^{-\lambda t})\nu(dt),\quad \lambda\ge 0.
    \end{align*}
   Again, we assume that $\psi$ satisfies \ref{as:WSC} with some parameters $\ua{\psi},\oa{\psi}>0$, and $0<\ud{\psi}\le \od{\psi}<1$.

        The main process of interest $Y=(Y_t)_{t\ge 0}$ is the randomly time-changed process $X^D$ by $T$, defined by $Y_t=(X^D)_{T_t}$. This process belongs to a large class of subordinate killed L\'evy processes and is associated with the operator $\Lo$ as its infinitesimal generator, Section \ref{s:operator}.

    Although many of the stated results hold for more general domains, from now on, we always assume that $D$ is a $C^{1,1}$ bounded open set.
       
    \subsection*{Transition kernels}   
    
    The process $X$ has a density $p(t,x,y)$, which due to subordination has the representation 
    \begin{align}\label{eq:SBM dens}
        p(t,x,y)=\ex\left[\frac{1}{(4\pi S_t)^{d/2}}e^{-\frac{|x-y|^2}{4S_t}}\right],\quad x,y\in \R^d,\, t>0.
    \end{align}
    In particular, $p(t,x,y)$ is symmetric around the
	origin, i.e. $p(t,x,y)=p(t,x-y)$. Under the condition \ref{as:WSC} the small-time heat kernel estimates are given by
	\begin{align*}
		p(t,x,y)\asymp \frac{1}{\phi^{-1}(1/t)^{d/2}}\wedge
		\frac{t\phi(|x-y|^{-2})}{|x-y|^{d}},\quad |x-y|<M, \, t\in (0,1),
	\end{align*}
		see \cite{Mimica-heatSBM-proceed16} or \cite{bogdan_density_and_tails_unimodal}, where the comparability constants depend on $\phi$, $M$ and $d$.
	In fact, for all $x,y\in \R^d$ it holds that 
	\begin{align}\label{eq:den-tails bnd}
		p(t,x,y)\le C(d)t\frac{\phi(|x-y|^{-2})}{|x-y|^d},\quad t>0,
	\end{align}
	see \cite[Corollary 7]{bogdan_density_and_tails_unimodal}.
    The $C_0(\R^d)$-semigroup generated by $X$ is defined by
    \begin{align*}
        P_tf(x)=\ex_{x}[f(X_t)]=\int_{\R^d} p(t,x,y)f(y)dy,\quad x\in \R^d,\,t>0,
    \end{align*}
    with the infinitesimal generator given by 
        \begin{align}
            Lu(x)&=\int_{\R^d}\big(u(y)-u(x)-\nabla u(x)\cdot (y-x)\1_{|x-y|<1}\big)j_\phi(|x-y|)dy\nonumber\\
            &=\lim_{\varepsilon\downarrow 0}\int_{|x-y|> \varepsilon}\big(u(y)-u(x)\big)j_\phi(|x-y|)dy\nonumber\\
            &=:\textrm{P.V.}\int_{\R^d}\big(u(y)-u(x)\big)j_\phi(|x-y|)dy,\label{eq:L-defRd}
        \end{align}
        for all $u\in C_0^2(\R^d)$, where $j_\phi(|x-y|)$ is the L\'evy density of the L\'evy measure of $X$, given by
        \begin{align}\label{eq:jumping jphi}
            j_\phi(r)=\int_0^\infty \frac{1}{(4\pi t)^{d/2}}e^{-\frac{r^2}{4t}}\mu(t)dt,\quad r>0,
        \end{align}
        see \cite[Chapter 6]{sato}.
        We extend the definition of $L$ to all functions for which the expression in \eqref{eq:L-defRd} is well defined, which is certainly true for all $C^{1,1}(\R^d)\cap L^1(\R^d,(1\wedge j(|x|))dx)$ functions, see e.g. \cite[Section 3]{BogBycz-PoteThe1999}.

        The killed process $X^D$ has the transition density $p_D(t,x,y)$, which is given by Hunt's formula: for all $t>0$ and $x,y\in \R^d$,
	\begin{align}\label{eq:p_D Hunt}
		p_D(t,x,y)=p(t,x,y)-\ex_x[p(t-\tau_D,X_{\tau_D},y)\1_{\{t<\tau_D\}}].
	\end{align}
        The transition density is symmetric and jointly continuous up to the boundary, see Lemma \ref{ap:l:joint p_D}. Further, under \ref{as:WSC} the sharp estimates of  $p_D(t,x,y)$ are obtained in
	\cite{KM-ejp18-heatkerneldomain}:  for $x,y\in D$  and small time $t<T$
	it holds that
	\begin{align}\label{eq:heat-sharp-small}
		\begin{split}
			p_D(t,x,y)&\asymp \left(\frac{V(\de(x))}{\sqrt{t}}\wedge
			1\right)\left(\frac{V(\de(y))}{\sqrt{t}}\wedge
			1\right)\times\\
			&\hspace{6em}\left({\phi^{-1}(1/t)^{d/2}}\wedge
			\frac{t}{|x-y|^{d}V(|x-y|)^2}\right),
		\end{split}
	\end{align}
	where $V$ is the renewal function of $X$. We refer to \cite[Subsection 4.1]{BVW-na21} for the construction of the function $V$ and emphasize its important role in various boundary phenomena, which will also later be demonstrated in the article. Under \ref{as:WSC}, $V$ satisfies
    \begin{align*}
	    V(t)\asymp \frac{1}{\sqrt{\phi(t^{-2})}},\quad t<1,
    \end{align*}
    which implies that $V$ also satisfies the weak scaling condition \eqref{eq:WSC}, but at zero:
    \begin{align}\label{e:wsc-V}
    	\wt{a}_1 \left(\frac{t}{s}\right)^{\ud{\phi}} \le \frac{V(t)}{V(s)} \le \wt{a}_2 \left(\frac{t}{s}\right)^{\od{\phi}} , \quad 0<s\le t\le 1.
    \end{align}
    
    The large-time sharp bounds for the transition density $p_D(t,x,y)$ can be given in terms of the smallest eigenvalue $\lambda_1>0$ of $L$,
	\begin{align}\label{eq:heat-sharp-big}
		p_D(t,x,y)&\asymp e^{-\lambda_1 t}V(\de(x))V(\de(y))\quad x,y\in D,
	\end{align}
see Section	\ref{s:operator}. The killed process $X^D$ has a $C_0(D)$-semigroup given for $x\in D$, $t>0$, and $f\in C_0(D)$ by
    \begin{align*}
        P^D_tf(x)=\ex_{x}[f(X^D_t)]=\ex_{x}[f(X_t);t<\tau_D]=\int_D p_D(t,x,y)f(y)dy, 
    \end{align*}
    where we naturally extend the definition of a function $f:D\to\R$ to $\partial$ by $f(\partial)=0$. The semigroup $(P_t^D)_t$ has the strong Feller property, i.e. $P_t^D L^\infty(D)\subset C_b (D)$, which follows by continuity of $p_D$. Since $X^D$ is a symmetric process, it is easy to see that the semigroup $(P^D_t)_t$ is self-adjoint. The associated infinitesimal generator to $(P^D_t)_t$ is denoted by $\Lbase$ and for $u\in \DD(\Lbase)$ it holds:
    \begin{align}\label{eq:Lbase-defD}
        \Lbase u(x)=Lu(x)=\textrm{P.V.}\int_D\big(u(y)-u(x)\big)j_\phi(|x-y|)dy-u(x)\kappa(x),
    \end{align}
    where the function $\kappa(x)\coloneqq\int_{D^c}j_\phi(|x-y|)dy$ is called the killing density for $\Lbase$. Here the $C_0(D)$ domain of $\Lbase$ is the space of all $C_0(D)$ functions such that the expression above is in $C_0(D)$, see \cite{BaeumerLuksMeersc-SpaceTime18} for a delicate study of the domain of the infinitesimal generators of killed Feller processes. We note that $C_c^2(D)$ is not a subset of the domain $\DD(\Lbase)$ in the $C_0(D)$-sense by Appendix \ref{ap:domain}. However, there is a unique extension of $(P^D_t)_t$ to a $L^2(D)$-semigroup, for which $C_c^\infty(D)$ functions are in the $L^2(D)$ domain of the infinitesimal generator. These two facts are generally known, but due to the lack of a formal reference, we present a short proof in Lemma \ref{ap:l:generator domain}. Nevertheless, throughout this paper we use the same notation for the semigroups (as well as their infinitesimal generators) in both spaces, when appropriate, since they are equal a.e.~on $D$.

    Using the subordination principle, the transition density of $Y_t=(X^D)_{T_t}$, denoted by $r_D(t,x,y)$, is given by
        \begin{align*}
            r_D(t,x,y)=\ex[p_D(T_t,x,y)]=\int_0^\infty p_D(s,x,y)\p(T_t\in ds),\quad x,y\in D, \, t>0.
        \end{align*}
    The semigroup of $Y$ (both in the $C_0(D)$ and $L^2(D)$ sense) is denoted by $(R^D_t)_t$, and can be viewed as the Bochner subordinated semigroup $(P^D_t)_t$. We also note that $R^D_t$ is symmetric and strongly Feller, see \cite[Proposition V.3.3]{bliedtner}.
    
    \subsection*{Green functions}
    Since  $D$ is bounded, the process $X^D$ is transient and has the Green function $G_D$  given by
	\begin{align*}
		G_D(x,y)&=\int_0^\infty p_D(t,x,y)dt.
	\end{align*}
	The upper bound for $G_D$ is known: for $x,y\in D$ it holds that
	\begin{align}\label{eq:GD sharp bound SBM}
		G_D(x,y)\lesssim 
		\frac{1}{|x-y|^d V(|x-y|)^{-2}},
	\end{align}
	where the constant depends on $d$, $\phi$ and $D$, see \cite[Lemma 2.1]{ksv2020boundary}. We note that $G_D$ is finite off the diagonal,  infinite on the diagonal, and jointly continuous in the extended sense, see Lemma \ref{l:GDP cont}. The corresponding Green potential operator has the following probabilistic interpretation
    \begin{align*}
    	G_D f(x)\coloneqq \int_D G_D(x,y)f(y)dy=\ex_{x}\left[\int_0^{\tau_D}f(X_t)dt\right],\quad f\ge 0.
    \end{align*}

	The Green function of $Y$ is also well defined and is given by
        \begin{align}\label{eq:GDP_representation}
		\GDP(x,y)\coloneqq \int_0^\infty r_D(t,x,y)dt=\int_0^\infty p_D(t,x,y)\uu(t)dt,
	\end{align}
        where $\uu$ denotes the density of the potential measure of $T$, i.e. 
        $U(A)\coloneqq  \ex\left[\int_0^\infty \1_A(T_t)dt\right]=\int_A \uu(t)dt$, $A\in \BB([0,\infty))$. Note that the existence of a non-increasing potential density $\uu$, integrable on bounded sets, follows from the assumption that $\psi$ is a complete Bernstein function, and in particular a special Bernstein function, see \cite[Chapter 11]{bernstein}.
        
        As before, the probabilistic interpretation of the Green operator $\GDP$ is
        \begin{align*}
        	\GDP f(x)\coloneqq \int_D \GDP(x,y)f(y)dy=\ex_{x}\left[\int_0^{\infty}f(Y_t)dt\right],\quad f\ge 0,
        \end{align*}
        and the sharp bounds for $\GDP$ are known for all $x,y\in D$
	\begin{align}\label{eq:GDP sharp}
		\GDP(x,y)\asymp \left(\frac{V(\de(x))}{V(|x-y|)}\wedge 1\right)\left(\frac{V(\de(y))}{V(|x-y|)}\wedge 1\right)\frac{1}{|x-y|^{d}\psi(V(|x-y|)^{-2})},
	\end{align}
	see \cite[Theorem 6.4]{ksv2020boundary}.
    By standard calculation, it is easy to show that $\GDP$ is jointly continuous in the extended sense in $D\times D$, see Lemma \ref{l:GDP cont}.    

    There is a strong connection between the Green functions of $X^D$ and $Y$, i.e. between $G_D$ and $\GDP$. In order to establish this connection, we introduce the conjugate Bernstein function $\psi^*(\lambda)\coloneqq\frac{\lambda}{\psi(\lambda)}$ of $\psi$, for details see \cite{bernstein}. Since $\psi$ satisfies \ref{as:WSC}, the function $\psi^*$ satisfies
	\ref{as:WSC} with constants $\ua{\psi^*}=1/\oa{\psi}$, $\oa{\psi^*}=1/\ua{\psi}$, and $\ud{\psi^*}=1-\oa{\psi}$ and $\od{\psi^*}=1-\ud\psi$. The Bernstein function $\psi^*$ generates a subordinator $T^*$, called the conjugate subordinator of $T$, and we can consider the stochastic process $Y^*$ (and the accompanying potential theoretical objects) obtained by subordinating the process $X^D$ by $T^*$. As before, denote the Green function of the $Y^*$ by $\GDPs$ and the potential density of $T^*$ by $\uu^*$. One can show, see for example \cite[Proposition 14.2.(ii)]{bernstein}, the following interplay of the Green functions $\GDP$, $\GDPs$ and $G_D$.
	\begin{lem}\label{l:compos-of-GDs}
		For every $x,y\in D$ it holds that
		\begin{align*}
			\int_D \GDP(x,\xi)\GDPs(\xi,y)d\xi=G_D(x,y).
		\end{align*}
	\end{lem}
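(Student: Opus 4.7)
The plan is to use the subordination representation of the three Green functions together with the defining identity $\psi(\lambda)\psi^{*}(\lambda)=\lambda$, which, at the level of the potential densities $\uu$ and $\uu^{*}$, translates into the convolution identity $\uu*\uu^{*}\equiv 1$ on $(0,\infty)$. Combined with the Chapman--Kolmogorov equation for the killed semigroup, this should collapse the left-hand side to the unsubordinated Green function $G_D(x,y)$.

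First I would write
\begin{align*}
\int_D \GDP(x,\xi)\GDPs(\xi,y)\,d\xi=\int_D\!\int_0^\infty\!\int_0^\infty p_D(s,x,\xi)p_D(t,\xi,y)\uu(s)\uu^{*}(t)\,ds\,dt\,d\xi,
\end{align*}
using \eqref{eq:GDP_representation} and the analogous formula for $\GDPs$. Since all integrands are non-negative, Fubini's theorem applies without further justification and I can interchange the $\xi$-integral with the two time integrals. The Chapman--Kolmogorov identity $\int_D p_D(s,x,\xi)p_D(t,\xi,y)\,d\xi=p_D(s+t,x,y)$ (which is immediate from the semigroup property of $(P^D_t)$ together with the symmetry and joint continuity of $p_D$) then reduces the expression to
\begin{align*}
\int_0^\infty\!\int_0^\infty p_D(s+t,x,y)\uu(s)\uu^{*}(t)\,ds\,dt.
\end{align*}

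Next I would perform the change of variables $r=s+t$, $s=s$, to rewrite this as
\begin{align*}
\int_0^\infty p_D(r,x,y)\Big(\int_0^r \uu(s)\uu^{*}(r-s)\,ds\Big)dr=\int_0^\infty p_D(r,x,y)(\uu*\uu^{*})(r)\,dr.
\end{align*}
The key remaining ingredient is the identity $(\uu*\uu^{*})(r)=1$ for a.e.\ $r>0$. To see this, I would compute Laplace transforms: by definition of $\uu$ and $\uu^{*}$ as densities of the potential measures of the subordinators $T$ and $T^{*}$, we have $\mathcal{L}[\uu](\lambda)=1/\psi(\lambda)$ and $\mathcal{L}[\uu^{*}](\lambda)=1/\psi^{*}(\lambda)=\psi(\lambda)/\lambda$, so $\mathcal{L}[\uu*\uu^{*}](\lambda)=1/\lambda=\mathcal{L}[\mathbf 1_{(0,\infty)}](\lambda)$. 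Uniqueness of the Laplace transform gives $\uu*\uu^{*}\equiv 1$ a.e., and substituting this back yields $\int_0^\infty p_D(r,x,y)\,dr=G_D(x,y)$.

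The only delicate step is the convolution identity $\uu*\uu^{*}\equiv 1$; this is essentially the content of the cited \cite[Proposition 14.2]{bernstein} and is where the assumption that $\psi$ is a complete (hence special) Bernstein function enters, ensuring that both $\uu$ and $\uu^{*}$ exist as locally integrable non-increasing densities. Everything else is a routine application of Fubini and Chapman--Kolmogorov, so I expect this Laplace-transform verification to be the main (but very short) obstacle.
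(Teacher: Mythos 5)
Your proof is correct and is essentially the same argument that underlies the cited reference \cite[Proposition 14.2.(ii)]{bernstein}: the subordination representation of the Green functions, Chapman--Kolmogorov for $(P_t^D)_t$, and the convolution identity $\uu*\uu^*\equiv 1$ (read off from the Laplace transforms $1/\psi(\lambda)$ and $1/\psi^*(\lambda)=\psi(\lambda)/\lambda$). The paper itself does not spell out a proof but defers to that reference, and your write-up is a faithful and complete unwinding of it; the only place you implicitly rely on the standing hypotheses is in asserting that $\uu$ and $\uu^*$ are genuine (atom-free, locally integrable, non-increasing) densities, which the paper justifies via $\psi,\psi^*$ being complete (hence special) Bernstein functions satisfying the weak scaling condition so that neither potential measure carries a Dirac mass at $0$.
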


	\section{Operator $\Lo$}\label{s:operator}
	
	In \cite[Theorem 2.1]{ChenSong-Spectral07} it was shown that the operators $P^D_t$ are compact for all $t>0$, hence they have a discrete spectrum, which implies that there is an orthonormal basis $(\varphi_j)_j$ on $L^2(D)$ consisting of
	eigenfunctions of $\Lbase$ (as well as $P^D_t$):
	\begin{align}\label{1028}
		\Lbase \varphi_j=-\lambda_j\varphi_j,\quad \text{in $L^2(D)$,}
	\end{align}
	where $0<\lambda_1\le \lambda_2\le\lambda_3\le\dots$ satisfy
	\begin{align*}
		\lambda_j\asymp \phi(j^{2/d}),\quad j\in \N.
	\end{align*}
	The last assertion holds by \cite[Theorem 4.5]{ChenSong-Two-sided-JFA05} and the fact that the eigenvalues of the Dirichlet Laplacian $\Delta_{|D}$ behave like $j^{2/d}$ by Weyl's law, see also \cite[Theorem 2.7]{chung_zhao}.
	
	In the case of the fractional or classical Laplacian, the eigenfunctions $\varphi_j$ are bounded in $D$ and sufficiently regular in the suitable H\"older space, even up to the boundary, with the bounds given in terms of the eigenvalues. When $L$ is the fractional Laplacian, this was proved in \cite{Fernandez-RealRos-Oton2014}. Following the approach in \cite{Fernandez-RealRos-Oton2014}, in Appendix \ref{ap:GR-reg} we prove the following analogous result in our more general non-local setting and for generalized H\"older spaces.
	 	 
	\begin{thm}\label{t:eigenfuncions}
		For every $j\in \N$ we have $\varphi_j\in C_0(D)\cap C^V(D)$ and there exist
		constants $C_1,\, C_2,\,C_3,\,k,\,\varepsilon>0$, depending only on $d$, $D$ and $\phi$, such that
		\begin{align}
			\|\varphi_j\|_{L^\infty(D)}&\le C_1\lambda_j^{k-1},\label{eq:eigen no0}\\
			\|\varphi_j\|_{C^{V}(D)}&\le C_2\lambda_j^{k},\label{eq:eigen no1}\\
			\left\|\frac{\varphi_j}{V(\de)}\right\|_{C^{\varepsilon}(D)}&\le
			C_3\lambda_j^{k}.\label{eq:eigen no2}
		\end{align}
	\end{thm}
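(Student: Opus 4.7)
The plan is to carry out a bootstrap argument in the spirit of \cite{Fernandez-RealRos-Oton2014}: start from the $L^2$ normalization $\|\varphi_j\|_{L^2(D)}=1$ and the spectral identity \eqref{1028}, upgrade to an $L^\infty$ bound via ultracontractivity of the semigroup $(P^D_t)_t$, and then to the generalized H\"older bounds by inverting $\Lbase$ through the Green operator $G_D$ and applying the boundary regularity theorem of Kim--Kim--Lee--Lim \cite{KKLL-jfa19}. The two working representations are the semigroup form $\varphi_j=e^{\lambda_j t}P^D_t\varphi_j$ for every $t>0$ and the resolvent form $\varphi_j=\lambda_j G_D\varphi_j$ (equivalent to \eqref{1028} since $G_D$ inverts $-\Lbase$).

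For \eqref{eq:eigen no0}, I would use the heat kernel bound \eqref{eq:heat-sharp-small} together with $p_D\le p$ and \eqref{eq:den-tails bnd} to obtain $\sup_{x,y}p_D(t,x,y)\lesssim \phi^{-1}(1/t)^{d/2}$, and from there, by $\int p_D(t,x,y)\,dy\le 1$ and Cauchy--Schwarz, the ultracontractive estimate
\[
\|P^D_t\|_{L^2(D)\to L^\infty(D)}\lesssim \phi^{-1}(1/t)^{d/4},\qquad t\in(0,T).
\]
Choosing $t=1/\lambda_j$ and using the upper scaling in \ref{as:WSC}, which yields $\phi^{-1}(\lambda)\lesssim \lambda^{1/\ud{\phi}}$ for large $\lambda$, one finds $\|\varphi_j\|_{L^\infty(D)}\lesssim \lambda_j^{d/(4\ud{\phi})}$; the finitely many small eigenvalues are absorbed into the constant. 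This gives \eqref{eq:eigen no0} with $k=d/(4\ud{\phi})+1$. The membership $\varphi_j\in C_0(D)$ then follows from the identity $\varphi_j=\lambda_j G_D\varphi_j$ and the fact that Green potentials of bounded data are continuous and vanish at the boundary, a consequence of the same heat-kernel/strong-Feller machinery.

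The remaining bounds \eqref{eq:eigen no1} and \eqref{eq:eigen no2} follow in a single step from the regularity theorem for Green potentials of bounded data in \cite{KKLL-jfa19}, which under \ref{as:WSC} and the $C^{1,1}$ assumption on $D$ provides, for every $f\in L^\infty(D)$,
\[
\|G_D f\|_{C^V(D)}+\Bigl\|\tfrac{G_D f}{V(\de)}\Bigr\|_{C^\varepsilon(D)}\lesssim \|f\|_{L^\infty(D)},
\]
with some $\varepsilon>0$ and constants depending only on $d$, $\phi$, and $D$. Applying this to $f=\varphi_j$ and multiplying by $\lambda_j$ via $\varphi_j=\lambda_j G_D\varphi_j$ gives both \eqref{eq:eigen no1} and \eqref{eq:eigen no2} with an exponent that is one higher than in \eqref{eq:eigen no0}, i.e.\ $\lambda_j^k$.

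The main obstacle is technical rather than conceptual: one has to check that the hypotheses of \cite{KKLL-jfa19} are fully met in our setting and that the various constants along the chain $L^2\to L^\infty\to C^V\to C^\varepsilon(V(\de)^{-1}\,\cdot\,)$ depend only on $d,\phi,D$ and not on $j$. Since each step is a polynomial estimate in $\lambda_j$ whose exponent is controlled by $\ud{\phi}$ and $d$, a single $k$ works uniformly. These verifications are postponed to Appendix~\ref{ap:GR-reg}, where the argument is spelled out in detail.
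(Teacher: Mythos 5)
Your proposal is correct, and it departs from the paper's argument in one interesting place: the proof of the $L^\infty$ bound \eqref{eq:eigen no0}. The paper follows Fernandez-Real--Ros-Oton and bootstraps through the scale of $L^p$ spaces: it uses the Green function bound \eqref{eq:GD sharp bound SBM} plus \ref{as:WSC} to dominate $G_D(x,y)\lesssim |x-y|^{-(d-2\ud{\phi})}$, then iterates the Hardy--Littlewood--Sobolev-type mapping properties of the corresponding Riesz potential $U_{\ud{\phi}}$ starting from $\|\varphi_j\|_{L^2(D)}=1$ until the exponent exceeds $\infty$; each application of $G_D$ costs one power of $\lambda_j$, and the number of iterations depends only on $d$ and $\ud{\phi}$. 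You instead get $L^2\to L^\infty$ in a single step from the ultracontractivity of $(P_t^D)_t$, using the on-diagonal heat kernel bound from \eqref{eq:heat-sharp-small} and the eigenvalue identity $\varphi_j=e^{\lambda_j t}P_t^D\varphi_j$ with $t=1/\lambda_j$. Both routes deliver a bound of the form $C\lambda_j^{d/(4\ud{\phi})}$ up to constants that depend only on $d,\phi,D$; the ultracontractive argument is shorter and avoids tracking the chain of exponents, while the paper's bootstrap stays closer to the cited reference and requires only the Green function, not the heat kernel, estimate. After that the two proofs converge: both deduce $C_0(D)$ membership from the resolvent identity $\varphi_j=\lambda_j G_D\varphi_j$ together with boundary regularity of Green potentials of bounded data (the paper cites Lemmas 2.2 and 2.4 of the reference by the second author et al.), and both obtain \eqref{eq:eigen no1} and \eqref{eq:eigen no2} by applying Theorems 1.1 and 1.2 of \cite{KKLL-jfa19} to $G_D\varphi_j$. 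The one thing to make explicit, which you flag but do not carry out, is that the ultracontractive estimate $\|P_t^D\|_{L^2\to L^\infty}\lesssim \phi^{-1}(1/t)^{d/4}$ only holds for $t$ below some fixed $T$; choosing $t=1/\lambda_j$ thus requires $\lambda_j$ large, and the finitely many small eigenvalues must be absorbed into the constant, as you note.
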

		
	The principal eigenvalue $\varphi_1$ can be chosen such that $\varphi_1>0$, and then by \cite[Theorem 3.2 \& Theorem 3.3]{BL-na21} it holds that
	\begin{align}\label{eq:eigen sharp}
		\varphi_1\asymp V(\de)\quad \text{in $D$}.
	\end{align}
	Define the operator $\Lo$ by
	\begin{align*}
		\Lo u=\sum_{j=1}^\infty \psi(\lambda_j)\wh u_j \varphi_j,
	\end{align*}
	where $\wh u_j$ are the coefficients of $u\in L^2(D)$ with respect to the orthonormal basis $(\varphi_j)_j$, with domain
	\begin{align}\label{eq:domain L2 Lo}
		\DD(\Lo)=\{u=\sum_{j=1}^{\infty} \wh u_j \varphi_j\in L^2(D):
		\sum_{j=1}^\infty \psi(\lambda_j)^2|\wh u_j|^2<\infty\}.
	\end{align}
	It can be easily seen that the space $\DD(\Lo)$ is a Banach space equipped with the norm $\|u\|^2_{\DD(\Lo)}=\sum_{j=1}^\infty \psi(\lambda_j)^2|\wh u_j|^2$, and that $\Lo$ is an unbounded operator in $L^2(D)\to L^2(D)$. Further, $\Lo$ has a bounded inverse
	\begin{align*}
		\LoInv f=\sum_{j=1}^\infty \frac{1}{\psi(\lambda_j)}\wh f_j \varphi_j,\quad f \in L^2(D).
	\end{align*}
    In the next proposition, we prove that the operator can be seen as the infinitesimal generator of the $L^2(D)$ semigroup generated by $Y$.
	\begin{prop}\label{p:Lo inverse L2(D)}
		Let $f\in L^
        2(D)$. For a.e. $x\in D$, it holds that 
		\begin{align*}
			\LoInv f(x)=\GDP f(x).
		\end{align*}
        In particular, $\GDP f\in \LLL$.
	\end{prop}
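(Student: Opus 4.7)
The strategy is to connect three pictures of the same object: the spectral representation of $\psi(-L_{|D})^{-1}$ via the basis $(\varphi_j)_j$, the Bochner subordination representation of the semigroup $(R^D_t)_t$ of $Y$, and the kernel representation $G_D^\psi f(x)=\int_D G_D^\psi(x,y)f(y)\,dy$. The equality of these three gives both the identity and the integrability.

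First, I would record the spectral action of the subordinate semigroup. Since $P^D_t\varphi_j=e^{-\lambda_j t}\varphi_j$ in $L^2(D)$ and $T$ is independent of $X^D$ with Laplace exponent $\psi$, conditioning on $T_t$ and using Fubini yields
\begin{align*}
 R^D_t \varphi_j \;=\; \int_0^\infty P^D_s\varphi_j\,\mathbb{P}(T_t\in ds)\;=\;\varphi_j\int_0^\infty e^{-\lambda_j s}\,\mathbb{P}(T_t\in ds)\;=\;e^{-\psi(\lambda_j)t}\varphi_j.
\end{align*}
By orthonormality and Parseval, $(R^D_t)_t$ is a strongly continuous, self-adjoint contraction semigroup on $L^2(D)$ with generator $-\psi(-L_{|D})$ in the sense of \eqref{eq:domain L2 Lo}, and for $f=\sum_j\widehat f_j\varphi_j\in L^2(D)$ we have $R^D_tf=\sum_j e^{-\psi(\lambda_j)t}\widehat f_j\varphi_j$ in $L^2(D)$.

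Second, I would integrate in $t$. Since $\psi(\lambda_j)\ge\psi(\lambda_1)>0$, Fubini/Tonelli together with Parseval justifies
\begin{align*}
 \int_0^\infty R^D_t f\,dt \;=\; \sum_{j=1}^\infty \widehat f_j\,\varphi_j\int_0^\infty e^{-\psi(\lambda_j)t}\,dt \;=\; \sum_{j=1}^\infty \frac{\widehat f_j}{\psi(\lambda_j)}\,\varphi_j \;=\; \psi(-L_{|D})^{-1}f
\end{align*}
in $L^2(D)$. On the other hand, writing $R^D_tf(x)=\int_D r_D(t,x,y)f(y)\,dy$ and applying Tonelli to $|f|$ (using that the kernel $G_D^\psi(x,y)=\int_0^\infty r_D(t,x,y)dt$ is finite off the diagonal and the sharp estimate \eqref{eq:GDP sharp} is integrable in $y$ for each $x$), one obtains $\int_0^\infty R^D_tf(x)\,dt=G_D^\psi f(x)$ for a.e.\ $x\in D$ after splitting $f=f^+-f^-$. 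Combining the two identifications yields $\psi(-L_{|D})^{-1}f=G_D^\psi f$ a.e.

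Finally, for the membership $G_D^\psi f\in \LLL$: by the spectral formula the operator $\psi(-L_{|D})^{-1}$ has operator norm at most $1/\psi(\lambda_1)$ on $L^2(D)$, so $G_D^\psi f\in L^2(D)$. Since $D$ is bounded and $V$ is continuous with $V(\delta(\cdot))$ bounded on $D$, the weight $V(\delta(x))$ is in $L^\infty(D)$, and hence $L^2(D)\hookrightarrow \LLL$ via Cauchy--Schwarz. The main technical nuisance I anticipate is the careful application of Fubini in the kernel identification for sign-changing $L^2$ data, which I would handle by the standard $f^\pm$ decomposition and the off-diagonal finiteness of $G_D^\psi$ recalled above.
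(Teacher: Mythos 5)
Your argument is correct in substance, but it takes a genuinely different route from the paper, and it has one step that needs more justification.

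The paper first verifies $\GDP\varphi_1=\LoInv\varphi_1$ by Tonelli on the representation $\GDP(x,y)=\int_0^\infty p_D(t,x,y)\uu(t)\,dt$, extends this to the finite spectral span using $|\varphi_j|\lesssim V(\de)$, shows the resulting map is an isometry from $(\operatorname{span}\{\varphi_j\},L^2)$ to $(\DD(\Lo),\|\cdot\|_{\DD(\Lo)})$, and then passes to the limit $f_n\to f$ in $L^2(D)$; the a.e.\ identification and the membership $\GDP f\in\LLL$ both come out of the estimate $\int_D\GDP|f-f_n|(x)\varphi_1(x)\,dx\le\|f-f_n\|_{L^2}/\psi(\lambda_1)$ together with $\varphi_1\asymp V(\de)$. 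Your approach instead starts from the semigroup identity $R^D_t\varphi_j=e^{-\psi(\lambda_j)t}\varphi_j$, uses Parseval to compute the Bochner integral $\int_0^\infty R^D_tf\,dt=\LoInv f$ in $L^2(D)$, and then separately identifies this Bochner integral with the kernel integral $\GDP f$. This is conceptually cleaner and dovetails nicely with Remark~\ref{r:infty-gener}; the price is that the final identification of the Bochner integral with the pointwise integral against the kernel is not completely automatic. Your $\LLL$ conclusion via $\GDP f\in L^2(D)$ and the boundedness of $V(\de)$ on the bounded set $D$ is also correct, and is in fact a stronger statement than the paper's $L^1$-type bound.

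The one step that is not adequately justified is where you invoke ``off-diagonal finiteness of $G_D^\psi$'' to apply Tonelli and obtain $\int_0^\infty R^D_t f(x)\,dt=\GDP f(x)$ a.e. Off-diagonal finiteness of the kernel does not by itself give $\int_D\GDP(x,y)|f(y)|\,dy<\infty$ for a.e.\ $x$, which is what Fubini requires; $\GDP(x,\cdot)$ blows up on the diagonal, so one must control the singularity against $f\in L^2$. Two clean ways to close this gap: (i) truncate in time, note that the Bochner and pointwise integrals agree for $\int_0^N$ (here Fubini is harmless since $\int_0^N\!\int_D R^D_t|f|\,dx\,dt<\infty$ by the exponential decay and Cauchy--Schwarz), then let $N\to\infty$ using monotone convergence on the pointwise side and $L^2$-convergence (hence a.e.\ along a subsequence) on the Bochner side; or (ii) argue as the paper does, integrating $\GDP|f|$ against $\varphi_1$ and using Fubini in $(x,y)$ together with $\GDP\varphi_1=\psi(\lambda_1)^{-1}\varphi_1$ to deduce $\GDP|f|<\infty$ a.e. With either fix in place, the rest of your proof goes through.
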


    \begin{proof}
		The proof of this claim is very similar to \cite[Proposition 2.2]{Bio23cpaa} and \cite[Lemma 11]{AbaDup-Nonhomog2017} so we keep the proof concise and put the emphasis on the differences.
        
		First, for $f=\varphi_1\ge0$, by \eqref{eq:GDP_representation}, \eqref{1028}, and Tonelli's theorem,  we have for a.e. $x\in D$
		\begin{align*}
			\GDP\varphi_1(x)&=\int_0^\infty \uu(t)\int_D
			p^D(t,x,y)\varphi_1(y)dydt=\int_0^\infty \uu(t)e^{-\lambda_1
				t}\varphi_1(x)dt=\LoInv\varphi_1(x),
		\end{align*}
		where the last equality follows from the connection between the potential density and the Laplace exponent of the subordinator $T$, see \cite[Eq. (5.20)]{bernstein}.
		
        By \eqref{eq:eigen no2} and \eqref{eq:eigen sharp} we get $|\varphi_j|\lesssim V(\de)$, so by Fubini's theorem and the previous calculation, we obtain $\GDP f=\LoInv f$ a.e. in $D$ for all $f\in \textrm{span}\{\varphi_j:j\in \N\}$.

		The operator $\G$ on $\textrm{span}\{\varphi_j:j\in \N\}$ given by $\G f=\GDP f$ is an isometry on $\textrm{span}\{\varphi_j:j\in \N\}$, with the standard $L^2(D)$ norm in the domain, and the norm $\|\cdot\|_{\DD(\Lo)}$ in the codomain. Hence, $\G$ can be uniquely extended to an isometry on $L^2(D)$, which corresponds to $\LoInv$ since both operators agree on $\textrm{span}\{\varphi_j:j\in \N\}$.
        
        Take $f=\sum_{i=1}^\infty \wh f_j \varphi_j\in L^2(D)$ and the approximating sequence $f_n=\sum_{i=1}^n \wh f_j \varphi_j$, $n\in \N$. Obviously, $\G f_n \to \G f$ in $L^2(D)$, and by
        \begin{align*}
            \int_D\int_D \GDP(x,y)|f(y)-f_n(y)|dy\varphi_1(x)dx&=\frac{1}{\psi(\lambda_1)}\int_D|f(y)-f_n(y)|\varphi_1(y)dy\\
            &\le \frac{\| f-f_n\|_{L^2(D)}}{\psi(\lambda_1)},
        \end{align*}
        we have that $\GDP f = \G f =\LoInv f$ a.e. in $D$ and that $\GDP(x,\cdot)f(\cdot)\in \LLL$ since $\varphi_1\asymp V(\de)$ by \eqref{eq:eigen sharp}.
	\end{proof}

	\begin{rem}\label{r:infty-gener}
    \begin{enumerate}
        \item[(i)]
        We defined the operator $\Lo$ by subordinating the spectral decomposition of $\Lbase$. However, by recalling that the $R^D$ semigroup of $Y$ is the subordinated semigroup $P^D$ of $X^D$, and by noting that for all $j\in \N$, and  $t>0$ we have
        \begin{align*}
            R^D_t \varphi_j=\int_0^\infty P^D_s \varphi_j \p(T_t\in ds)=\int_0^\infty e^{-\lambda_j s}\p(T_t\in ds)=e^{-\psi(\lambda_j)t},
        \end{align*}
        it is easy to see that the $L^2(D)$ infinitesimal generator of $R^D$ has the same spectral form as $\Lo$. Thus, $\Lo$ is the infinitesimal generator of $Y$ in $L^2(D)$, with the domain $\DD( \Lo)$ as in \eqref{eq:domain L2 Lo}, and it can be viewed as the Bochner subordinated operator $\Lbase$. For an overview of the general theory, see e.g. \cite[Chapter 13]{bernstein}.  Further, the relation $\Lo \GDP f=f$ for $f\in L^2(D)$ from Proposition \ref{p:Lo inverse L2(D)} shows that $\GDP$ is indeed the potential operator of the infinitesimal generator of 
        $Y$ in the proper sense in $L^2(D)$.

        \item[(ii)]
        Since $\Lo$ can be viewed as the Bochner subordinated $\Lbase$, by  \cite[Theorem 13.6]{bernstein} it follows that $C_c^\infty(D)$ is in $\DD(\psi(-\Lbase))$, since $C_c^\infty(D)$ is in $L^2(D)$ domain of $\Lbase$, see  Lemma \ref{ap:l:generator domain}. In the case when $X^D$ is a local operator, e.g.~when $L=\Delta$, the spectral coefficients of $\varphi\in C_c^\infty(D)$ with respect to $(\varphi_j)_j$ exhibit fast decay, see \cite[Eq. (22)]{AbaDup-Nonhomog2017} for the spectral Laplacian and \cite[Eq. (2.26)]{Bio23cpaa} for a more general setting. However, the analogous calculation fails in the case when $L$ is a non-local operator. This fast decay of coefficients of $C_c^\infty(D)$ functions was heavily used in \cite{AbaDup-Nonhomog2017,Bio23cpaa}. In this paper, we proceed with an alternative approach, which allows us to obtain the analogous result without assumptions on the decay rate.  
    \end{enumerate}      
	\end{rem}

    Note that analogous results obtained for the operator $\Lo$ hold also for $\Los$, since $\psi$ and $\psi^*$ satisfy the same assumptions. Similarly to Lemma \ref{l:compos-of-GDs} which connects the Green operators, the following connection between the operators $\Lo$, $\Los$, and $\Lbase$ was proved in \cite[Corollary 13.25]{bernstein}.
    
	\begin{prop}\label{p:composition-Lo}
		For every $f\in L^2(D)$ which is in the $L^2(D)$-domain of the operator
		$\Lbase$, we have
		\begin{align*}
			\Lo \circ \Los f=-L_{|D} f,\quad \textit{a.e. in $D$.}
		\end{align*}
	\end{prop}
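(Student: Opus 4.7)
The strategy is pure spectral calculus, exploiting the defining identity $\psi(\lambda)\psi^{*}(\lambda) = \lambda$ that comes from the conjugate Bernstein pair $\psi,\psi^{*}$, together with the fact that both $\Lo$ and $\Los$ are defined as Borel functions of $-\Lbase$ through the same orthonormal eigenbasis $(\varphi_j)_j$. Since the three operators share this eigenbasis and only differ by which scalar multiplier acts on the $j$-th mode, the claimed composition identity reduces to the scalar identity at each eigenvalue $\lambda_j$. The one thing that actually needs checking is that the intermediate function $\Los f$ lies in $\DD(\Lo)$, which is where the hypothesis $f\in\DD(\Lbase)$ gets used.

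The plan is as follows. First, write $f = \sum_{j=1}^\infty \hat f_j \varphi_j$; by assumption $f\in\DD(\Lbase)$ in the $L^2(D)$ sense means $\sum_j \lambda_j^2 |\hat f_j|^2 <\infty$. Second, verify that $f\in\DD(\Los)$: since $\psi^{*}(\lambda) = \lambda/\psi(\lambda) \le \lambda/\psi(\lambda_1)$ for $\lambda\ge\lambda_1>0$, one has $\psi^{*}(\lambda_j)^2 \le C\lambda_j^2$ and so $\sum_j \psi^{*}(\lambda_j)^2 |\hat f_j|^2<\infty$. Hence $\Los f$ is well defined in $L^2(D)$ with spectral representation
\begin{equation*}
    \Los f \;=\; \sum_{j=1}^\infty \psi^{*}(\lambda_j)\,\hat f_j\, \varphi_j.
\end{equation*}
Third, check that $\Los f\in \DD(\Lo)$ by computing
\begin{equation*}
    \sum_{j=1}^\infty \psi(\lambda_j)^2\bigl(\psi^{*}(\lambda_j)\hat f_j\bigr)^2
    \;=\; \sum_{j=1}^\infty \bigl(\psi(\lambda_j)\psi^{*}(\lambda_j)\bigr)^2 |\hat f_j|^2
    \;=\; \sum_{j=1}^\infty \lambda_j^2 |\hat f_j|^2 \;<\;\infty,
\end{equation*}
using $\psi(\lambda)\psi^{*}(\lambda)=\lambda$. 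Fourth, apply $\Lo$ termwise to obtain
\begin{equation*}
    \Lo\bigl(\Los f\bigr) \;=\; \sum_{j=1}^\infty \psi(\lambda_j)\psi^{*}(\lambda_j)\,\hat f_j\,\varphi_j
    \;=\; \sum_{j=1}^\infty \lambda_j \hat f_j \varphi_j \;=\; -\Lbase f,
\end{equation*}
where the last equality is the spectral representation of $-\Lbase$ on $\DD(\Lbase)$. All equalities above hold in $L^2(D)$, and hence a.e.\ on $D$.

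There is essentially no hard obstacle: the only subtle point is the domain verification in the second and third steps, which is why the bound $\psi^{*}(\lambda)\le\lambda/\psi(\lambda_1)$ must be invoked to guarantee that $\Los f$ indeed lives in $\DD(\Lo)$. Alternatively, one could simply cite \cite[Corollary 13.25]{bernstein}, which encapsulates exactly this Bochner-subordination identity for conjugate Bernstein functions applied to a self-adjoint generator; the spectral argument above is a short, self-contained derivation of the same fact in the present concrete setting.
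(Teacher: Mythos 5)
Your proof is correct. The paper itself does not give a computation for this proposition: it simply cites \cite[Corollary 13.25]{bernstein}, which records the abstract Bochner-subordination identity $\psi(A)\psi^{*}(A)=A$ for conjugate Bernstein pairs applied to a non-negative self-adjoint operator. Your argument is the explicit spectral-calculus unwinding of exactly that fact in the present eigenbasis $(\varphi_j)_j$ of $-\Lbase$, and you even note the citation route yourself at the end. The domain bookkeeping you carry out is sound: $f\in\DD(\Lbase)$ means $\sum_j\lambda_j^2|\hat f_j|^2<\infty$, the monotonicity of the Bernstein function $\psi$ gives $\psi^{*}(\lambda_j)=\lambda_j/\psi(\lambda_j)\le\lambda_j/\psi(\lambda_1)$ so $f\in\DD(\Los)$, and the exact identity $\psi(\lambda_j)\psi^{*}(\lambda_j)=\lambda_j$ then yields both $\Los f\in\DD(\Lo)$ and the mode-by-mode equality $\Lo(\Los f)=\sum_j\lambda_j\hat f_j\varphi_j=-\Lbase f$ in $L^2(D)$, hence a.e. This is a self-contained and slightly more informative presentation of the same underlying fact; nothing is missing.
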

		
	In what follows, we obtain the integral representation of the operator $\Lo$, based on the representation of Bochner subordinated operators.
	\begin{prop}\label{p:Lo pointwise}
		For $u\in C^{1,1}(D)\cap \DD(\Lbase)$ it holds for almost all $x\in D$ that
		\begin{align}\label{eq:Lo point}
			\Lo u(x)=\textrm{P.V.}\int_D \big(u(x)-u(y)\big)J_D(x,y)dy+\kappa(x)u(x),
		\end{align}
		where 
		\begin{align*}
			J_D(x,y)\coloneqq \int_0^\infty p_D(t,x,y)\nu(t)dt,\quad\text{and}\quad
			\kappa(x)\coloneqq \int_0^\infty\left(1-P_t^D\1(x)\right) \nu(t)dt.
		\end{align*}
	\end{prop}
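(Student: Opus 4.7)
The plan is to start from Phillips' representation of Bochner subordinate generators and identify the resulting time integral with the claimed principal-value expression. Since $\Lo$ is the subordinate generator of $\Lbase$ via the driftless Bernstein function $\psi$ (Remark \ref{r:infty-gener}(i)) and $u\in\DD(\Lbase)$, \cite[Theorem~13.6]{bernstein} yields
\begin{equation*}
\Lo u(x)=\int_0^\infty\bigl(u(x)-P^D_t u(x)\bigr)\,\nu(dt)
\end{equation*}
in $L^2(D)$, hence for a.e.\ $x\in D$. Using $P^D_t u(x)=\int_D p_D(t,x,y)u(y)\,dy$ and $P^D_t\1(x)=\int_D p_D(t,x,y)\,dy$, decompose
\begin{equation*}
u(x)-P^D_t u(x)=u(x)\bigl(1-P^D_t\1(x)\bigr)+\int_D (u(x)-u(y))\,p_D(t,x,y)\,dy,
\end{equation*}
and integrate the first summand against $\nu(dt)$ to obtain $\kappa(x)u(x)$ by definition of $\kappa$.

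Fix $x\in D$ and $\varepsilon\in(0,\de(x)/2)$, and split $D=A_\varepsilon\cup B(x,\varepsilon)$ with $A_\varepsilon=D\setminus B(x,\varepsilon)$. On $A_\varepsilon$, standard local boundedness of $u$ (and Cauchy--Schwarz control when needed near $\partial D$) together with the Lévy-type bound $\int_{A_\varepsilon}p_D(t,x,y)\,dy\le\p_x(|X_t-x|>\varepsilon)\lesssim 1\wedge t$ and $\int_0^\infty(1\wedge t)\nu(dt)<\infty$ gives absolute integrability of the integrand in $\nu(dt)\,dy$; Fubini then produces $\int_{A_\varepsilon}(u(x)-u(y))J_D(x,y)\,dy$. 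The identification of \eqref{eq:Lo point} therefore reduces to showing that the interior piece
\begin{equation*}
E_\varepsilon:=\int_0^\infty\nu(dt)\int_{B(x,\varepsilon)}(u(x)-u(y))\,p_D(t,x,y)\,dy
\end{equation*}
satisfies $E_\varepsilon\to 0$ as $\varepsilon\to 0$.

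On $B(x,\varepsilon)\subset D$, the $C^{1,1}$ hypothesis furnishes $u(y)=u(x)+\nabla u(x)\cdot(y-x)+R(x,y)$ with $|R(x,y)|\le C|y-x|^2$. The quadratic remainder contributes at most $C\int_0^\infty\ex_x[|X_t-x|^2\wedge\varepsilon^2]\,\nu(dt)$, and this tends to $0$ as $\varepsilon\to 0$ by dominated convergence against the envelope $(1\wedge t)\nu(dt)$. The linear term is the principal obstacle, since $\nabla u(x)\cdot(y-x)p_D(t,x,y)$ is not in $L^1(\nu\otimes dy)$ near the diagonal and the needed cancellation must be extracted structurally. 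Apply Hunt's formula \eqref{eq:p_D Hunt} to decompose $p_D=p-h$: the $p$-piece vanishes identically at each $t$ by isotropy of $X$, namely $\int_{B(x,\varepsilon)}\nabla u(x)\cdot(y-x)\,p(t,x,y)\,dy=0$; the $h$-piece exploits that for $y\in B(x,\varepsilon)$ and $X_{\tau_D}\in D^c$ we have $|X_{\tau_D}-y|\ge\de(x)/2$, and then \eqref{eq:den-tails bnd} delivers uniform control of $p(t-\tau_D,X_{\tau_D},y)$ on this ball, giving absolute integrability of the $h$-integrand against $\nu(dt)\,dy$ and an $O(\varepsilon)$ vanishing contribution. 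Hunt's decomposition is the technical crux: it isolates the translation-invariant part, where isotropy kills the first moment exactly, from an interior-controlled remainder that admits a direct Fubini argument.
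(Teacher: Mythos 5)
Your proposal is correct and matches the paper's approach essentially verbatim: the paper's proof points to \cite[Lemma 2.4]{Bio23cpaa} and identifies the only modification as replacing the explicit Brownian heat kernel bound with \eqref{eq:den-tails bnd} to obtain $p(t-\tau_D,X_{\tau_D},y)\lesssim 1\wedge t$ for $y$ near $x$, which is precisely the Hunt-formula estimate you derive from $|X_{\tau_D}-y|\ge\de(x)/2$. The overall strategy — Phillips/Bochner representation, decomposition into the killing term and the jump term, Taylor expansion on $B(x,\varepsilon)$, isotropy killing the linear moment of the free kernel $p$, and the den-tails bound controlling the Hunt correction $h$ — is the same decomposition the paper implicitly invokes.
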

	
	\begin{proof}
		The proof follows the same approach as the one in \cite[Lemma 2.4]{Bio23cpaa}, where $L=\Delta$, and relies on the formula 
        \begin{align*}
            \Lo u = \int_0^\infty\big(  u-P_t^D u\big)\nu(t)dt,\quad u\in \DD(\Lbase).
        \end{align*}
        We can repeat all the calculations from \cite{Bio23cpaa} in the same way but instead of using the explicit form of the Brownian heat kernel as in \cite[Eq. (2.32)]{Bio23cpaa} we use the bound \eqref{eq:den-tails bnd} to obtain
		\begin{align*}
			p(t-\tau_D,X_{\tau_D},y)\lesssim 1\wedge t,\quad y\in B(x,1\wedge (\de(x)/4)), \quad \text{$\p_x$-a.s.}
		\end{align*}
	\end{proof}
	\begin{rem}\label{r:Lo pointwise}
    The boundary behaviour of the jumping density $J_D$ of $Y$ is crucial in discerning which weighted $L^1$ space should be considered for distributional solutions to equations related to $\Lo$. First we note that $J_D$ vanishes at the boundary. When $L=\Delta$, the sharp bound for $J_D$ from \cite{ksv_minimal2016,Bio23cpaa} is of the form
    \begin{align*}
        J_D(x,y)\asymp \left(\frac{\de(x)}{|x-y|}\wedge 1\right)\left(\frac{\de(y )}{|x-y|}\wedge 1\right) j_\psi(|x-y|),\quad x,y\in D.
    \end{align*}
     Although in Subsection \ref{ss:homo-sol} we describe the boundary behaviour of $J_D(x,y)$ for fixed $y\in D$, the sharp behaviour in both $x$ and $y$ is not known in the full generality of our setting. In essence, this complication occurs because both $\psi$ and $\phi$ make non-local contributions to $J_D$. The most significant progress on this problem was made in \cite{ksv2020boundary}, when $\psi$ satisfies \eqref{eq:WSC} with
	$\frac12<\ud{\psi}$ or $\od{\psi}<\frac12$, where analogous estimates can be obtained from \cite[Theorem 8.4]{ksv2020boundary}. Specifically, for $\psi(\lambda)=\lambda^{\alpha/2}$, $\alpha\in(0,2)$, by \cite[Example 8.5]{ksv2020boundary} for all $x,y\in D$ the kernel $J_D(x,y)$ is comparable to 
    \begin{align*}
        \begin{cases}
            \left(\frac{V(\de(x)\wedge \de(y))^2}{V(|x-y|)^2}\wedge
            1\right)^\frac12\left(\frac{V(\de(x)\vee \de(y))^2}{V(|x-y|)^2}\wedge
            1\right)^{\frac12-\frac{\alpha}2}\frac{1}{|x-y|^{d}
                V(|x-y|)^{\alpha}},\enskip\alpha<1,\\
            \left(\frac{V(\de(x)\wedge \de(y))^2}{V(|x-y|)^2}\wedge
            1\right)^\frac12\log\left(1+\frac{V(\de(x)\vee \de(y))^{2}\wedge
                V(|x-y|)^2}{V(\de(x)\wedge \de(y))^{2}\wedge
                V(|x-y|)^2}\right)\frac{1}{|x-y|^{d}
                V(|x-y|)},\enskip\alpha=1,\\
            \left(\frac{V(\de(x)\wedge \de(y))^2}{V(|x-y|)^2}\wedge
            1\right)^{1-\frac{\alpha}2}\frac{1}{|x-y|^{d} V(|x-y|)^{\alpha}},\enskip\alpha>1.
        \end{cases}
    \end{align*}

    It is also worth mentioning that the killing density $\kappa$ of $Y$ is continuous in $D$ and belongs to $\LLL$. The continuity follows from the fact that $X^D$ is strongly Feller, and from the bound \cite[Corollary 2.8]{barriers_BGR}, cf. \cite[Remark 2.5]{Bio23cpaa}. The integrability in $\LLL$ follows from \eqref{eq:eigen sharp} and $P^D_t\varphi_1=e^{-\lambda_1 t}\varphi_1$.
\end{rem}

	\section{Green potentials}\label{s:Green}

    In this section, we study the functions $u=\GDP f$, their integrability, and sharp boundary behaviour. The main goal is to prove that $\GDP f$ distributionally solves $\Lo u=f$ in the largest class of functions $f$ such that $\GDP f$ is well defined, see  Theorem \ref{t:GDPf dist-sol}.

	\begin{lem}\label{l:Green invariant}
		It holds that
		$$\GDP V(\de)(x)\asymp V(\de(x)), \quad x\in D.$$
		Furthermore, $\GDP f$ is finite a.e. in $D$ if and
		only if $f\in \LLL$. Additionally, there  exists $C=C(d,D,\phi,\psi)>0$ such that 
		\begin{align*}
			\| \GDP f\|_{\LLL}\le C\|f\|_{\LLL}, \quad f\in \LLL.
		\end{align*}
	\end{lem}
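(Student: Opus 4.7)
I would address the three assertions in order.

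The invariance $\GDP V(\de)\asymp V(\de)$ follows from the spectral identity. By \eqref{eq:eigen sharp}, $\varphi_1\asymp V(\de)$ on $D$, and since $\varphi_1$ is an $\Lo$-eigenfunction with eigenvalue $\psi(\lambda_1)$, Proposition \ref{p:Lo inverse L2(D)} applied to $f=\varphi_1$ gives $\GDP\varphi_1=\varphi_1/\psi(\lambda_1)$ almost everywhere in $D$. Non-negativity of the Green kernel then transfers the pointwise comparison: $\GDP V(\de)\asymp \GDP\varphi_1\asymp V(\de)$.

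For the remaining two claims I rely on symmetry of $\GDP(\cdot,\cdot)$ (inherited from that of $p_D$) and Tonelli's theorem, which together with the invariance just established give
$$\int_D \GDP|f|(x)\,V(\de(x))\,dx = \int_D |f(y)|\,\GDP V(\de)(y)\,dy \asymp \|f\|_{\LLL}.$$
Combined with $|\GDP f|\le \GDP|f|$, this immediately yields the bound in claim (3) and shows that, when $f\in\LLL$, $\GDP|f|\in\LLL$ and in particular $\GDP f$ is finite a.e.

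For the converse direction of claim (2), I would assume $\GDP f$ is finite a.e.\ and reduce to $f\ge 0$. The goal is to deduce $f\in\LLL$ from pointwise finiteness. Pick $x_0\in D$ with $\GDP f(x_0)<\infty$. The sharp two-sided estimate \eqref{eq:GDP sharp} combined with boundedness of $D$ yields $\GDP(x_0,y)\gtrsim V(\de(y))$ for $y$ with $\de(y)<\de(x_0)/2$: there $|x_0-y|$ lies in a bounded positive range, controlling all factors except $V(\de(y))/V(|x_0-y|)\wedge 1$. This gives $\int_{\{\de(y)<\de(x_0)/2\}}f\,V(\de)\,dy<\infty$. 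For the compact interior set $K=\{\de(y)\ge\de(x_0)/2\}$, on which $V(\de)$ is bounded, I would pick a second point $x_1\in D$ with $\GDP f(x_1)<\infty$ and $|x_0-x_1|$ large enough so that $K$ is covered by the two regions $\{|y-x_0|\ge r\}$ and $\{|y-x_1|\ge r\}$ for a suitable $r>0$; on each of these, \eqref{eq:GDP sharp} gives $\GDP(x_i,\cdot)\gtrsim 1$, yielding $\int_K f\,dy<\infty$ and therefore $f\in\LLL$.

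\textbf{Main obstacle.} The substantive step is the converse direction of claim (2). The Tonelli calculation only produces $\|\GDP f\|_{\LLL}\asymp\|f\|_{\LLL}$, which is formally weaker than the pointwise a.e.\ finiteness characterization; a function can be finite a.e.\ without belonging to the weighted $L^1$ space. Bridging this gap requires evaluating $\GDP f$ at two well-chosen interior points $x_0,x_1$ and extracting appropriate pointwise lower bounds on $\GDP(x_i,\cdot)$ from \eqref{eq:GDP sharp}, separately for $y$ near the boundary and $y$ in the interior.
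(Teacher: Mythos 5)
Your proof is correct and largely mirrors the paper's: the same spectral identity $\GDP\varphi_1 = \varphi_1/\psi(\lambda_1)$ together with $\varphi_1\asymp V(\de)$ gives the invariance, Tonelli gives the norm bound and the forward implication, and the converse rests on pointwise lower bounds for $\GDP(x_i,\cdot)$ extracted from \eqref{eq:GDP sharp} at two interior evaluation points. The one structural difference is in the converse: the paper first shows, via the strong Markov property of $Y$ and a positive lower bound on the distribution of $Y_{\tau_{B(x,\varepsilon)}}$, that finiteness of $\GDP f$ at a \emph{single} point already forces finiteness a.e., and only then applies the Green-function lower bound on $D\setminus B(x,\de(x)/2)$ and repeats with a second point to cover the ball. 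Your version takes the a.e.\ hypothesis at face value, splits $D$ into a boundary collar $\{\de<\de(x_0)/2\}$ (where $\GDP(x_0,\cdot)\gtrsim V(\de)$) and the interior $K$ (covered by two sets $\{|y-x_i|\ge r\}$ where $\GDP(x_i,\cdot)\gtrsim 1$). Both decompositions work; the paper's bootstrap step buys the sharper dichotomy that, for $f\ge 0$, $\GDP f$ is either finite a.e.\ or identically $+\infty$, which is more than the stated lemma requires but is what makes it possible to phrase the equivalence in terms of ``finite at some point.'' Your route is slightly more direct for the lemma exactly as stated.

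One small point to watch in writeup: the claim $\GDP V(\de)(x)\asymp V(\de(x))$ is asserted for all $x\in D$, while Proposition \ref{p:Lo inverse L2(D)} as stated only gives a.e.\ equality. For $f=\varphi_1\ge 0$, however, the Tonelli computation inside that proposition's proof together with continuity of $\varphi_1$ and $P^D_t\varphi_1$ upgrades this to a pointwise identity, so the comparison is in fact pointwise. The paper glosses over this as well, but it is worth being aware of.
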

	\begin{proof}
By Proposition \ref{p:Lo inverse L2(D)}  we have that $\GDP
		\varphi_1=\psi(\lambda_1)^{-1}\varphi_1$, so the first claim follows from
		\eqref{eq:eigen sharp}. Moreover, the Fubini theorem then implies that if $f\in \LLL$, then $\GDP f\in \LLL$, hence $\GDP f$ is finite a.e. Assume now that $f\ge 0$ and $\GDP f(x)$ is finite at some point $x\in D$. Then, by the strong Markov property of $Y$, 
		\begin{align*}
			\GDP f(x)&\ge  
			 \ex_x\left[\ex_{Y_{ \tau_{B(x,\varepsilon)} } } \left[\int_0^\infty f(Y_t)dt\right]\right]=\int_{D\setminus B(x,\varepsilon)}\GDP f(y)\p_x(Y_{\tau_{B(x,\varepsilon)}}\in dy)
		\end{align*}
		for and $\varepsilon\in (0, \de(x))$. Since the distribution of $Y_{\tau_{B(x,\varepsilon)}}$ is positive and locally bounded from below for all $y\in D\setminus \overline{B(x,\varepsilon)}$, see \cite[p.~146-148]{ksv2020boundary}, $\GDP f$ is finite a.e.~on $y\in D\setminus \overline{B(x,\varepsilon)}$. Since $\varepsilon\in (0, \de(x))$, we conclude $\GDP f$ is finite a.e. on $D$. Now, take any such $x\in D$,  and note that by \eqref{eq:GDP sharp} we have $\GDP(x,y)\ge c(\de(x),d,D,\phi,\psi) V(\de(y))$, for $y\in D\setminus B(x,\de(x)/2)$. Hence
		\begin{align}\label{eq:1240}
			\infty>\GDP f(x)\gtrsim c(\de(x),d,D,\phi,\psi) \int_{D\setminus B(x,\de(x)/2)}V(\de(y))f(y)dy,
		\end{align}
		and by repeating the same reasoning as in \eqref{eq:1240}, it follows $f\in \LLL$. The claim for a general $f$ follows by $f=f^+-f^-$.
    \end{proof}

   Next, we present the sharp behaviour of Green potentials of a large class of functions, which is new even in the interpolated fractional Laplacian case. In classical fractional settings, this property is known in the form $G_D \de^\gamma\asymp \de^{\eta}$, see \cite{dhifli2012} and \cite[Theorem 3.4]{AGCV19}.

 \begin{thm}\label{t:bndry-G(U)}
		Let $U:(0,\infty)\to(0,\infty)$ be such that the following conditions hold:
		\begin{enumerate}[label=(\textbf{U\arabic*})]
			\item \label{U1}  
			$\int_{0}^1U(t)V(t)\, dt <\infty$;
			
			\item \label{U2} $U$ is almost non-increasing, i.e.~$\exists C>0$ such that $U(t)\le C U(s)$, $0<s\le t\le 1$;

			\item \label{U3} reverse doubling condition holds, i.e.~$\exists C>0$  
			such that $U(t)\le C U(2t)$, $t\in (0,1)$; 
			\item \label{U4} $U$ is bounded from
			above on $[c, \infty)$ for each $c >0$.
		\end{enumerate}
		Then $U(\de)\in\LLL$, and for all $x\in D$ it holds that
		\begin{equation}\label{eq:G(U) sharp bound}
    			\GDP\big(U(\de)\big)\asymp
    			\frac{1}{\de V(\de)\psi(V(\de)^{-2})}\int_0^{\de}U(t)V(t)\, dt+
    			V(\de)\left(1+\int_{\de}^{\diam D}
    			\frac{U(t)}{t V(t)\psi(V(t)^{-2})}\, dt\,\right).  
		\end{equation}
	\end{thm}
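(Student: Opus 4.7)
The approach is to apply the sharp Green function bound \eqref{eq:GDP sharp} directly to the defining integral $\GDP(U(\de))(x) = \int_D \GDP(x,y)\, U(\de(y))\, dy$ and to evaluate it by decomposing $D$ according to the three regimes in which the minima in \eqref{eq:GDP sharp} resolve. The membership $U(\de) \in \LLL$ follows from a tubular decomposition near $\partial D$: the strip $\{\de \le r_0\}$ is handled by \ref{U1}, and the bulk $\{\de \ge r_0\}$ by \ref{U4}. For the main estimate, fix $x \in D$, write $\de := \de(x)$, and split $D = A_1 \cup A_2 \cup A_3$ with $A_1 = D \cap B(x,\de/2)$, $A_2 = \{y \in D \setminus A_1 : \de(y) \ge |x-y|\}$, and $A_3 = \{y \in D \setminus A_1 : \de(y) < |x-y|\}$. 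On $A_1$ one has $\de(y) \asymp \de$ and $\GDP(x,y) \asymp (|x-y|^d \psi(V(|x-y|)^{-2}))^{-1}$; on $A_2$ the factor $V(\de(y))/V(|x-y|)$ drops out; on $A_3$ the full product $V(\de)V(\de(y))/V(|x-y|)^2$ survives.

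The $A_1$ contribution, after using \ref{U2} to replace $U(\de(y))$ by $U(\de)$ up to constants and integrating in polar coordinates, reduces to $U(\de) \int_0^{\de/2} (r\, \psi(V(r)^{-2}))^{-1}\, dr$, which by \eqref{e:wsc-V} together with the weak scaling of $\psi$ is comparable to $U(\de)/\psi(V(\de)^{-2})$, matching the first term of \eqref{eq:G(U) sharp bound} when $U$ is near-constant on $(0,\de)$. For the $A_2$ and $A_3$ contributions I pass to tubular coordinates near $\partial D$, writing $y = z + s\nu(z)$ with $z \in \partial D$, $s = \de(y)$, and $dy \asymp d\sigma(z)\, ds$; let $z_x$ denote the nearest-point projection of $x$ onto $\partial D$. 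Integrating first in $z$ uses the standard dichotomy $|x-y| \asymp \de$ when $|z - z_x| \lesssim \de$, and $|x-y| \asymp |z - z_x|$ otherwise. The first regime produces the factor $(\de V(\de) \psi(V(\de)^{-2}))^{-1}$ multiplying $\int_0^\de U(t)V(t)\, dt$; the second regime, after a radial change of variable on $\partial D$ and further splitting $s \in (0,\de]$ versus $s \in (\de, \diam D)$, produces $V(\de) \int_\de^{\diam D} U(t)/(t\, V(t)\, \psi(V(t)^{-2}))\, dt$; and the leftover contribution from $\{\de(y) \ge r_0\}$ is bounded by a constant times $V(\de)$ via \ref{U4}.

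The main obstacle is the matching lower bound: the calculations above follow straightforwardly from the sharp upper bound on $\GDP$ together with routine polar/tubular integration, while the lower bound requires restricting integration to explicit subregions on which each of the three target pieces is nearly attained, without losing constants along the way. Here condition \ref{U3} (reverse doubling) is essential for bridging dyadic scales of $U$ from below, converting pointwise comparisons into uniform bounds on dyadic annuli centered at $x$, thereby mirroring the role \ref{U2} plays in the upper bound. Finally, \ref{as:WSC} for $\psi$ (and $\phi$ via \eqref{e:wsc-V}) ensures that the scale-dependent integrals behave self-similarly on dyadic scales, closing both bounds and delivering \eqref{eq:G(U) sharp bound}.
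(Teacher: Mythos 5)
Your broad strategy---reduce to the sharp Green function estimate \eqref{eq:GDP sharp}, split the domain according to the scale regimes in which the minima resolve, and evaluate in tubular coordinates near $\partial D$---is indeed the strategy the paper uses (its proof is delegated to Lemmas \ref{apx:l:GDFI close-to-bdry} and \ref{apx:l:away-from-bdry}, following \cite[Theorem 3.6]{Bio23cpaa}). But the specific decomposition differs: the paper first separates a fixed tubular neighbourhood $\{\de(y)<\eta\}$ from the bulk, and within the tube splits into five explicit regions $D_1,\dots,D_5$ determined by comparing $\de(y)$ to $\de(x)$ and $|x-y|$ to a fixed radius $r_0$, whereas you propose the three-region split $A_1=B(x,\de/2)$, $A_2=\{\de(y)\ge|x-y|\}$, $A_3=\{\de(y)<|x-y|\}$ based directly on the minima in \eqref{eq:GDP sharp}. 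That is a legitimate alternative parametrisation and would in principle reach the same answer.

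However there is a genuine gap in the tubular-coordinate step as you have described it. With $y=z+s\nu(z)$, $s=\de(y)$, and $x$ at distance $\de$ from $\partial D$ along the normal at $z_x$, one has $|x-y|\asymp |z-z_x|+|s-\de|+\de$ (up to geometric constants on a $C^{1,1}$ boundary). Your stated dichotomy---``$|x-y|\asymp\de$ when $|z-z_x|\lesssim\de$, and $|x-y|\asymp|z-z_x|$ otherwise''---forgets the $|s-\de|$ contribution entirely. In particular, for $|z-z_x|\lesssim\de$ but $s\gg\de$ one has $|x-y|\asymp s$, not $\de$; and for $|z-z_x|\gtrsim\de$ with $s\gg|z-z_x|$ one has $|x-y|\asymp s$, not $|z-z_x|$. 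This is precisely the regime that produces the term $V(\de)\int_{\de}^{\mathrm{diam}\,D} U(t)/(tV(t)\psi(V(t)^{-2}))\,dt$ in \eqref{eq:G(U) sharp bound}, so running the computation with your dichotomy would not yield the correct second term. The paper's $D_4=\{3\de(x)/2<\de(y)<\eta\}\cap B(x,r_0)$ exists exactly to isolate that regime. Relatedly, your $A_1$ discussion claims the $B(x,\de/2)$ piece ``matches'' the first target term---it only does so when $U$ is essentially constant; in general \ref{U2} gives only the one-sided bound $U(\de)\int_0^{\de}V\lesssim\int_0^{\de}UV$, so $A_1$ contributes $\lesssim$ the first term, not $\asymp$; the two-sided comparison must come from the near-boundary region (the paper's $D_3$). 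Finally, you explicitly defer the lower bound and merely gesture at the role of \ref{U3}; since the statement is a two-sided estimate, that part is not optional, and it is exactly where the choice of subregions matters. To repair this, you would need a three-parameter decomposition tracking $\de(x)$, $\de(y)=s$, and $|z-z_x|$ simultaneously, which is effectively what the paper's $D_1,\dots,D_5$ accomplishes.
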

    The proof of Theorem \ref{t:bndry-G(U)} is provided in Appendix \ref{ap:s:G(U) approx} since it follows the lines of the proofs of \cite[Theorem 3.6]{Bio23cpaa} and \cite[Proposition 4.1]{BVW-na21}. 
    Whereas Theorem \ref{t:bndry-G(U)} is convenient for functions $U$ with singularities at 0, the case where $U$ decays at $0$ with a certain rate is covered by the following corollary, proved in much the same way as Theorem \ref{t:bndry-G(U)}.
    \begin{cor}\label{c:bndry-G(U)-decay}
     The estimate \eqref{eq:G(U) sharp bound} holds for all functions $U:(0,\infty)\to(0,\infty)$ satisfying \ref{U1} and \ref{U4} such that for every $c>0$ there exists $C=C(c)>1$ such that 
        \begin{align}\label{eq: U3'}
            \frac{1}{C}\, U(t)\le U(s)\le C\, U(t),\quad   {c}^{-1} t \le s \le c\,t, \ s,t\in (0,1].  \tag{\textbf{U3'}}
        \end{align}
    \end{cor}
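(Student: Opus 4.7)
The plan is to adapt the proof of Theorem \ref{t:bndry-G(U)} by replacing every appeal to the almost non-increasing property \ref{U2} and the reverse doubling \ref{U3} with the local two-sided comparability \ref{U3'}. A first observation is that \ref{U3'} with the choice $c=2$ already implies \ref{U3}; only \ref{U2} is genuinely weakened, so the argument needs to be reorganised precisely at those points in the proof where one previously wanted to compare $U(\de(y))$ with $U(\de(x))$ for $y$ lying further from $\partial D$ than $x$.

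First I would verify that $U(\de)\in\LLL$. By \ref{U4}, $U$ is bounded on $\{\de(x)\ge c\}$ for any $c>0$, so it suffices to treat the boundary layer $\{\de(x)<1\}$. The $C^{1,1}$ regularity of $\partial D$ together with Fubini's theorem reduces this integral to one comparable to $\int_0^1 U(t)V(t)\,dt$, which is finite by \ref{U1}.

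To establish \eqref{eq:G(U) sharp bound}, I would mimic the splitting strategy of \cite[Theorem 3.6]{Bio23cpaa} and \cite[Proposition 4.1]{BVW-na21}. Fix $x\in D$ and decompose
\[
\GDP\big(U(\de)\big)(x)=\int_D \GDP(x,y)\,U(\de(y))\,dy
\]
by partitioning $D$ dyadically according to $|x-y|$ and, within each such region, according to $\de(y)$. For $y$ with $|x-y|\le \de(x)/2$ one has $\de(y)\asymp\de(x)$, so \ref{U3'} yields $U(\de(y))\asymp U(\de(x))$; combined with \eqref{eq:GDP sharp}, this produces the ``interior'' contribution which matches the leading part of the first summand on the right-hand side of \eqref{eq:G(U) sharp bound}. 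For $y$ with $|x-y|>\de(x)/2$, I would slice the region by values of $\de(y)$: on the slice $\de(y)\asymp r$, \ref{U3'} gives $U(\de(y))\asymp U(r)$, and Fubini together with \eqref{eq:GDP sharp} reduces the integral to the one-dimensional expressions $\int_0^{\de(x)}U(t)V(t)\,dt$ and $\int_{\de(x)}^{\diam D}\frac{U(t)}{tV(t)\psi(V(t)^{-2})}\,dt$ appearing on the right-hand side of \eqref{eq:G(U) sharp bound}.

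The main obstacle I expect is the bookkeeping in the region where $\de(y)$ is substantially smaller than $|x-y|$: here the almost non-increasing property \ref{U2} would have permitted a direct bound $U(\de(y))\le CU(|x-y|)$, but under \ref{U3'} such a bound need not hold. To bypass this, one integrates first along level sets of $\de$ inside each dyadic annulus in $|x-y|$; on each such slice \ref{U3'} renders $U\circ\de$ essentially constant, while the Green function is controlled by its boundary factor $V(\de(y))/V(|x-y|)$ from \eqref{eq:GDP sharp}, so the two scales decouple cleanly. Once this slicing is in place, both the upper and lower bounds in \eqref{eq:G(U) sharp bound} follow from \eqref{eq:GDP sharp} through the same calculations as in the proof of Theorem \ref{t:bndry-G(U)}, with \ref{U3'} invoked in place of \ref{U2} whenever a pointwise comparison of $U$-values at comparable scales is required.
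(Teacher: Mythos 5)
Your proposal is correct and matches the paper's (implicit) approach: the paper gives no separate proof of the corollary, stating only that it is ``proved in much the same way as Theorem~\ref{t:bndry-G(U)}'', i.e.~by re-running the five-region decomposition of Lemma~\ref{apx:l:GDFI close-to-bdry} with \ref{U3'} replacing \ref{U2}--\ref{U3} at the points where $U$-values at comparable scales are compared. Two small remarks. First, you write that \ref{U2} ``would have permitted a direct bound $U(\de(y))\le C\,U(|x-y|)$''; but \ref{U2} gives the reverse inequality (for $\de(y)\le|x-y|$ one gets $U(|x-y|)\le C\,U(\de(y))$), and in fact the proof of Theorem~\ref{t:bndry-G(U)} never needs a pointwise comparison of $U$ across genuinely different scales --- on $D_1$ and $D_5$ one has $\de(y)\asymp\de(x)$ so \ref{U3'} suffices, and on $D_2$, $D_3$, $D_4$ the factor $U(\de(y))$ is integrated directly against $t=\de(y)$, exactly as your slicing remark describes, so the ``main obstacle'' you flag does not actually arise. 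Second, the one place \ref{U2} is substantively used in the original argument --- namely to absorb $I_1\asymp U(\de(x))/\psi\big(V(\de(x))^{-2}\big)$ into $I_3$ --- survives under \ref{U3'} alone, since
\begin{align*}
\int_0^{\de(x)}U(t)V(t)\,dt\ \ge\ \int_{\de(x)/2}^{\de(x)}U(t)V(t)\,dt\ \asymp\ U(\de(x))\,\de(x)\,V(\de(x)),
\end{align*}
the last comparison using \ref{U3'} and the weak scaling \eqref{e:wsc-V} of $V$. With these clarifications, your argument is a faithful rendering of what the paper intends.
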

 
    As mentioned before, the claims of Theorem \ref{t:bndry-G(U)} and Corollary \ref{c:bndry-G(U)-decay} for the power function $U$ have been already established in the case of the interpolated fractional Laplacian. 
    \begin{cor}\label{c:bndry behav of G(de^k)} Assume that $\phi(\lambda)=\lambda^{\beta/2}$ and $\psi(\lambda)=\lambda^{\alpha/2}$.	Let $\kappa\in(-1-\beta/2,\infty)$. Then
		\begin{align*}
			\GDP(\de^\kappa)\asymp\begin{cases}
				\de(x)^{\beta/2},&\kappa>\frac{\beta}{2}(1-\alpha),\\
				\de(x)^{\beta/2}\log\frac{\diam D}{\de(x)}&\kappa=\frac{\beta}{2}(1-\alpha),\\
				\de(x)^{\kappa+\frac{\alpha\beta}{2}}&\kappa<\frac{\beta}{2}(1-\alpha).
			\end{cases}
		\end{align*}
	\end{cor}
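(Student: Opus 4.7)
The plan is to invoke Corollary \ref{c:bndry-G(U)-decay} with $U(t)=t^\kappa$ and then evaluate the two integrals appearing in \eqref{eq:G(U) sharp bound} under the explicit scalings $V(t)\asymp t^{\beta/2}$ and $\psi(V(t)^{-2})\asymp t^{-\alpha\beta/2}$, which combine to give $V(t)\psi(V(t)^{-2})\asymp t^{\beta(1-\alpha)/2}$.

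First I would verify the hypotheses of Corollary \ref{c:bndry-G(U)-decay} for $U(t)=t^\kappa$. Condition \ref{U1} reads $\int_0^1 t^{\kappa+\beta/2}\,dt<\infty$, which is exactly the assumed $\kappa>-1-\beta/2$; condition \eqref{eq: U3'} is immediate for any pure power; and \ref{U4} can be enforced by truncating $U$ at level $(\diam D)^\kappa$ when $\kappa>0$, a modification that neither affects the values of $U(\de(x))$ for $x\in D$ nor either of the integrals in \eqref{eq:G(U) sharp bound}.

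Next I would substitute. The first summand of \eqref{eq:G(U) sharp bound} becomes
\[
\frac{1}{\de\,V(\de)\psi(V(\de)^{-2})}\int_0^{\de} t^{\kappa+\beta/2}\,dt
\;\asymp\; \de^{-1-\beta(1-\alpha)/2}\cdot\de^{\kappa+\beta/2+1}
\;=\;\de^{\kappa+\alpha\beta/2},
\]
while the outer integrand reduces to $t^{\kappa-1-\beta(1-\alpha)/2}$. Writing $a\coloneqq \kappa-1-\tfrac{\beta}{2}(1-\alpha)$, I split into three cases according to the sign of $a+1$. If $\kappa>\tfrac{\beta}{2}(1-\alpha)$ (so $a>-1$), the outer integral stays uniformly bounded as $\de\to 0$, giving outer contribution $V(\de)\asymp\de^{\beta/2}$, which dominates $\de^{\kappa+\alpha\beta/2}$ precisely because $\kappa+\alpha\beta/2>\beta/2$ in this regime. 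If $\kappa=\tfrac{\beta}{2}(1-\alpha)$, the outer integral is $\asymp\log(\diam D/\de)$, producing $\de^{\beta/2}\log(\diam D/\de)$, which still dominates the interior term $\de^{\beta/2}$. Finally, if $\kappa<\tfrac{\beta}{2}(1-\alpha)$, the outer integral is $\asymp\de^{\kappa-\beta(1-\alpha)/2}$, so that $V(\de)$ times it equals $\de^{\kappa+\alpha\beta/2}$, matching the interior contribution.

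The only obstacle is bookkeeping: one must check that in each regime the upper and lower bounds implicit in \eqref{eq:G(U) sharp bound} collapse to the single expression claimed, which reduces to identifying the larger of $\de^{\beta/2}$ and $\de^{\kappa+\alpha\beta/2}$ (with the critical exponent producing the logarithmic factor). No estimates beyond the elementary evaluation of power integrals are required.
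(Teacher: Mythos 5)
Your proof is correct and follows exactly the route the paper intends (the corollary is stated without proof, as a direct consequence of Corollary~\ref{c:bndry-G(U)-decay}): plug $U(t)=t^\kappa$, $V(t)\asymp t^{\beta/2}$, $\psi(V(t)^{-2})\asymp t^{-\alpha\beta/2}$ into \eqref{eq:G(U) sharp bound} and evaluate the resulting power integrals. The verification of the hypotheses, including the truncation trick to restore \ref{U4} when $\kappa>0$, and the case analysis on $\kappa - 1 - \tfrac{\beta}{2}(1-\alpha)$, are all sound.
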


    \subsection{Homogeneous solutions to $\Lo u=f$}\label{ss:homo-sol}
    Our final goal of this section is to show that Green potentials are distributional solutions to the problem $\Lo u=f$. Recall that a function $u$ is a distributional solution to $\Lo u=f$ if
	\begin{align}\label{eq:dist-sol}
		\int_D u(x)\Lo \xi(x)dx=\int_D f(x)\xi(x)dx,\quad \xi \in C_c^\infty(D).
	\end{align}
    Therefore, to determine the appropriate sharp weighted $L^1$ space for distributional solutions, we first need to determine the sharp boundary behaviour of $\Lo \xi$, for $\xi \in C_c^\infty(D)$, which is governed by $J_D$. Indeed, by using the representation \eqref{eq:Lo point}, for $\xi \in C_c^\infty(D)$ and  $\de(x)\le \frac12\dist(D^c,\supp \xi)$ we get that
	$$ |\Lo \xi(x)|\lesssim \int_{\supp \xi}J_D(x,y)|\xi(y)|dy,$$
    where the equality holds if $\xi$ is also non-negative.
Although sharp bounds for $J_D(x,y)$ are in general not known, see Remark \ref{r:Lo pointwise}, we can apply Proposition \ref{p:Lo pointwise}, \eqref{eq:heat-sharp-small} and \eqref{eq:heat-sharp-big} to get that for $y\in \supp \xi$ and $x\in D$ such that $\de(x)\le \frac12\dist(D^c,\supp \xi)$
    \begin{align}\label{eq:dist-weight-rho}
		J_D(x,y)&\asymp \int_0^{ \diam D} \left(\frac{V(\de(x))}{\sqrt{t}}\wedge 1\right)\psi(1/t) dt+V(\de(x))\nonumber\\
    &\asymp \int_0^{V(\de)^2}\psi(1/t)dt+V(\de)\int_{V(\de)}^{\diam D}\psi(s^{-2})ds+V(\de)\nonumber\\
    &\asymp V(\de)^2\psi(V(\de)^{-2})+V(\de)\int_{V(\de)}^1\psi(s^{-2})ds\eqqcolon \rho(\de(x)).
	\end{align}
    Here, in the first line we used $\nu(t)\asymp \frac{\psi(1/t)}{t}$, $t<1$, see \cite[Corollary 2.5 \& Proposition 2.6]{KSV14-gubhp-spa}, and in the last line we used  $V(\de)\int_{V(\de)}^{ \diam D}\psi(s^{-2})ds\gtrsim V(\de)$. The constant of comparability above depends only on $d$, $D$, $\psi$, $\phi$, and $\supp \xi$.   
    Therefore, the appropriate choice of the function space for distributional solutions is
    \begin{align*}
        \calL\coloneqq L^1\big(D,\rho(\de(x))dx\big).
    \end{align*}
    \begin{rem}
        Under	suitable assumptions as in Remark \ref{r:Lo pointwise}, it is not hard to  get a more explicit behaviour of $\Lo \xi$. E.g., if
	$\psi(\lambda)=\INTFR$, then as in \cite[Example 8.5]{ksv2020boundary}
	\begin{align*}
		\rho(\de(x))\asymp  \begin{cases}
		\de(x)^{\beta/2}, &\alpha<1,\\
		\de(x)^{\beta/2}\log\frac{\diam D}{\de(x)},&\alpha=1,\\
		\de(x)^{\beta/2-\alpha\beta/2},&\alpha>1,
		\end{cases}
	\end{align*}
    \end{rem}
     By applying Corollary \ref{c:bndry-G(U)-decay}, we have the following cornerstone result regarding the spaces $\calL$ and $\LLL$ which ensures that Green potentials are in the appropriate space for distributional solutions.
    \begin{thm}\label{t:conti-GDP-L1}
        It holds that $\GDP\big(\rho(\de)\big)\asymp V(\de)$, and the map $\GDP:\LLL\to \calL$
        is continuous.
    \end{thm}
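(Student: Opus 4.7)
The plan is to first establish the sharp estimate $\GDP(\rho(\de)) \asymp V(\de)$ by invoking Corollary \ref{c:bndry-G(U)-decay} with $U = \rho$, and then deduce the claimed continuity via a Tonelli argument exploiting the symmetry of the kernel $\GDP$. The second step is essentially mechanical once the first is in place, so the real work lies in the sharp boundary estimate.

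For the first step, I would verify the hypotheses of Corollary \ref{c:bndry-G(U)-decay} for $\rho(r) = V(r)^2\psi(V(r)^{-2}) + V(r)\int_{V(r)}^1 \psi(s^{-2})\,ds$. Condition \ref{U4} is immediate since $V$ and $\psi(V(\cdot)^{-2})$ are locally bounded away from the origin. Condition \eqref{eq: U3'} follows by combining the weak scaling \eqref{e:wsc-V} of $V$ at zero with the WSC of $\psi$: both $V(r)^2\psi(V(r)^{-2})$ and the tail integral $\int_{V(r)}^1\psi(s^{-2})ds$ transform in a controlled way under $r \mapsto cr$. For \ref{U1}, the WSC of $\psi$ (with $\od{\psi}<1$) gives $V(t)^2\psi(V(t)^{-2}) \lesssim V(t)^{2-2\od{\psi}}$-type bounds near $0$, and together with the scaling of $V$ this yields $\int_0^1 \rho(t)V(t)\,dt<\infty$.

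Having verified the hypotheses, I would then compute the two integrals on the right-hand side of \eqref{eq:G(U) sharp bound} for $U = \rho$. Substituting $u = V(t)$ (equivalently, grouping the integrands in terms of $V(t)$ and $\psi(V(t)^{-2})$) reduces each expression to one of elementary form, and one checks by direct manipulation using the WSC of $\psi$ that
\[
\frac{1}{\de V(\de)\psi(V(\de)^{-2})}\int_0^{\de}\rho(t)V(t)\,dt \;\asymp\; V(\de)
\quad\text{and}\quad
V(\de)\int_{\de}^{\diam D}\frac{\rho(t)}{tV(t)\psi(V(t)^{-2})}\,dt \;\asymp\; V(\de).
\]
This is the main obstacle: depending on the relationship between $\ud{\psi},\od{\psi}$ and $\ud{\phi},\od{\phi}$, there may be logarithmic borderline cases (analogous to the $\alpha=1$ case in Remark \ref{r:Lo pointwise}) that need to be treated separately, but in each regime the scaling conditions reduce the computation to a bookkeeping exercise.

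For the second step, once $\GDP(\rho(\de)) \asymp V(\de)$ is established, continuity follows as follows. For $f \in \LLL$, decompose $f = f^+ - f^-$; since $\GDP$ has a non-negative kernel, it suffices to bound $\|\GDP|f|\|_{\calL}$. By Tonelli's theorem and the symmetry $\GDP(x,y)=\GDP(y,x)$,
\[
\int_D \GDP|f|(x)\,\rho(\de(x))\,dx \;=\; \int_D |f(y)|\int_D \GDP(x,y)\rho(\de(x))\,dx\,dy \;=\; \int_D |f(y)|\,\GDP(\rho(\de))(y)\,dy.
\]
Applying the sharp estimate from the first step yields $\|\GDP f\|_{\calL} \le \|\GDP|f|\|_{\calL} \lesssim \|f\|_{\LLL}$, which is precisely the desired continuity with an explicit constant depending only on $d$, $D$, $\phi$, and $\psi$.
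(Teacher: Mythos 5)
Your overall route is the same as the paper's: verify the hypotheses of Corollary \ref{c:bndry-G(U)-decay} for the weight $\rho$, apply \eqref{eq:G(U) sharp bound}, and deduce continuity by Tonelli and symmetry of $\GDP(x,y)$. The continuity step is correct as written. The difference that you apply the corollary to $\rho$ as a whole, whereas the paper splits $\rho$ into its two summands and handles each separately (via Lemma \ref{ap:l:rho}), is cosmetic, since \eqref{eq: U3'} is stable under sums of positive functions.

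There is, however, a genuine error in your sharp-estimate step. You claim that the boundary-layer integral
\[
\frac{1}{\de V(\de)\psi(V(\de)^{-2})}\int_0^{\de}\rho(t)V(t)\,dt
\]
is $\asymp V(\de)$. It is not. Since $r\mapsto r^2\psi(r^{-2})$ is non-decreasing and $V$ is increasing, the contribution of the first summand $V(t)^2\psi(V(t)^{-2})$ of $\rho$ to this expression is at most $\frac{V(\de)}{\de}\int_0^{\de}V(t)\,dt\asymp V(\de)^2$, and the second summand, after using the bound \eqref{eq:1802}, contributes at most a constant times $V(\de)^{1+\eta}$ for some $\eta\in(0,1)$. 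Both quantities are $o(V(\de))$ as $\de\to 0$, so this term is strictly subordinate to $V(\de)$, not comparable to it. What actually furnishes the lower bound $\GDP\rho(\de)\gtrsim V(\de)$ is the standalone $V(\de)$ summand in the middle of \eqref{eq:G(U) sharp bound}, which your proposal never mentions. (Your second claimed equivalence, for the tail integral $V(\de)\int_{\de}^{\diam D}\frac{\rho(t)}{tV(t)\psi(V(t)^{-2})}\,dt$, is in fact correct: the integrand is $\frac{V(t)}{t}$ plus a term $\lesssim \frac{V(t)^\eta}{t}$, both integrable on $(0,\diam D)$, so the integral is bounded above and below by absolute constants as $\de\to0$.) The final conclusion $\GDP\rho(\de)\asymp V(\de)$ is still correct, but the correct statement for the two integrals is $\lesssim V(\de)$, not $\asymp V(\de)$. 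Finally, no case analysis on the relation between $\ud\psi,\od\psi$ and $1/2$ is required here: the paper's bound \eqref{eq:1802} absorbs the logarithmic borderline regime into the single estimate $V(t)^\eta\psi(V(t)^{-2})$, so all regimes are treated uniformly.
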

    \begin{proof}
    First, assume that $\GDP\big(\rho(\de)\big)\asymp V(\de)$. Then the continuity of $\GDP$ follows by Fubini's theorem and the symmetry of $G_D^\psi(x,y)$, since
    \begin{align*}
        \|\GDP f\|_{\calL}&\le \int_D \GDP|f|(x)\rho(\de(x))dx=\int_D |f|(x)\GDP\big(\rho(\de)\big)(x)dx\\
        &\asymp \int_D |f|(x)V(\de(x))dx=\|f\|_{\LLL}.
    \end{align*}
 To prove $\GDP\big(\rho(\de)\big)\asymp V(\de)$ we treat each term in $\rho(\de)$ separately. By \ref{as:WSC} for $\phi$ and $\psi$ we have that \eqref{eq: U3'} holds for $t\mapsto V(t)^2\psi(V(t)^{-2})$, see Lemma \ref{ap:l:rho}. Hence, the formula \eqref{eq:G(U) sharp bound} implies
    \begin{align*}
        \GDP\big(V(\de)^2\psi(V(\de)^{-2})\big)&\asymp \frac{1}{\de V(\de)\psi(V(\de)^{-2})}\int\limits_0^{\de}V(t)^3\psi(V(t)^{-2})\, dt\\
    			&\hspace{-1em}+
    			V(\de)+V(\de)\int\limits_{\de}^{\diam D}
    			\frac{V(t)}{t}\, dt\,.  
    \end{align*}
    Since $r\mapsto r^2\psi(r^{-2})$ is non-decreasing, and $V$ satisfies the weak scaling condition, the first term is bounded from above by $\frac{V(\de)}{\de}\int_0^{\de} V(t)dt\asymp  V(\de)^2\lesssim V(\de).$
     
    For the third term, we can again use the weak scaling property of $V$ to get $V(\de)\int_{\de}^{\diam D}
    			\frac{V(t)}{t}\, dt\lesssim  V(\de)\int_{0}^{\diam D}
    			\frac{V(t)}{t}\, dt\lesssim V(\de).$
    Therefore, $\GDP\big(V(\de)^2\psi(V(\de)^{-2})\big)\asymp V(\de)$.

    The second term of $\rho(\de)$ also satisfies \eqref{eq: U3'}, see Lemma \ref{ap:l:rho}.  Thus, we can apply Corollary \ref{c:bndry-G(U)-decay} to get
    \begin{align*}
        \GDP\left(V(\de)\int_{V(\de)}^1\psi(s^{-2})ds\right)&\asymp \frac{1}{\de(x)V(\de)\psi(V(\de)^{-2})}\int_0^{\de(x)}V(t)^2\int_{V(t)}^1\psi(s^{-2})ds\, dt\\
    			&\hspace{-1em}+
    			V(\de)+V(\de)\int_{\de}^{\diam D}
    			\frac{\int_{V(t)}^1\psi(s^{-2})ds}{t\psi(V(t)^{-2})}\, dt\,.  
    \end{align*}
    Note that by the weak scaling of $\psi$ we have
    \begin{align}\label{eq:1802}
        \int_{V(t)}^1\psi(s^{-2})ds\lesssim\begin{cases}
            V(t)^{2\ud{\psi}} \psi(V(t)^{-2}),& \ud{\psi}<\frac12,\\
            V(t)\psi(V(t)^{-2})\log\big(1/t\big),& \ud{\psi}=\frac12,\\
            V(t)\psi(V(t)^{-2}),& \ud{\psi}\ge \frac12.
        \end{cases} \lesssim V(t)^{\eta}\psi(V(t)^{-2})
    \end{align}
    for some $\eta\in (0,1)$. Hence, it follows that
    \begin{align*}
       \GDP\left(V(\de)\int_{V(\de)}^1\psi(s^{-2})ds\right)&\lesssim V(\de)^{1+\eta}+
    			V(\de)\left(1+\int_{\de}^{\diam D}
    			\frac{V(t)^{\eta}}{t}\, dt\,\right)\lesssim V(\de),
    \end{align*}
    where for the first term we used that $r\mapsto r^2\psi(^{-2})$ is non-decreasing, and that $V$ is increasing, and for the third we note that $\int_{0}^{\diam D}\frac{V(t)^{\eta}}{t}\, dt<\infty$ since $V(t)\lesssim t^{\ud{\phi}}$ by the weak scaling at zero and $\eta \ud{\phi}<1$.
    This finishes the proof of $\GDP\rho(\de)\asymp V(\de)$.
    \end{proof}

	\begin{thm}\label{t:GDPf dist-sol}
		For $f\in \LLL$, the function $u=\GDP f$ solves $\Lo u=f$ in $D$
		distributionally, i.e. \eqref{eq:dist-sol} holds.
	\end{thm}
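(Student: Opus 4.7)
The plan is to split the argument into an $L^2$--case handled by self-adjointness of $\Lo$ together with the identification $\GDP=\LoInv$ from Proposition \ref{p:Lo inverse L2(D)}, followed by a density/approximation step relying on the continuity of $\GDP:\LLL\to\calL$ from Theorem \ref{t:conti-GDP-L1}. Throughout, fix $\xi\in C_c^\infty(D)$: by Remark \ref{r:infty-gener}(ii), $\xi$ belongs to the $L^2(D)$--domain $\DD(\Lo)$, so $\Lo\xi\in L^2(D)$. I would also record the pointwise control $|\Lo\xi(x)|\lesssim \rho(\de(x))$ near $\partial D$, which is essentially the computation preceding \eqref{eq:dist-weight-rho} applied to the P.V.\ representation of Proposition \ref{p:Lo pointwise}; combined with the smoothness of $\xi$ on the compact subset $\{\de\ge \tfrac12\dist(D^c,\supp\xi)\}$, this gives the crucial uniform bound $\Lo\xi/\rho(\de)\in L^\infty(D)$.

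For the $L^2$ step, I note that $L^2(D)\subset\LLL$ since $V(\de)$ is bounded on the bounded domain $D$. For $f\in L^2(D)$, Proposition \ref{p:Lo inverse L2(D)} gives $\GDP f=\LoInv f\in\DD(\Lo)$ with $\Lo\GDP f=f$ in $L^2(D)$. The semigroup $(R^D_t)_t$ is symmetric (as a Bochner subordination of the symmetric $(P^D_t)_t$), so its generator $\Lo$ is self-adjoint, and applying self-adjointness to the pair $\GDP f,\xi\in\DD(\Lo)$ yields
\begin{equation*}
\int_D \GDP f(x)\,\Lo\xi(x)\,dx=\la \Lo\GDP f,\xi\ra_{L^2(D)}=\int_D f(x)\xi(x)\,dx,
\end{equation*}
which is precisely \eqref{eq:dist-sol} in the $L^2$ setting.

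For general $f\in\LLL$, I would truncate by $f_n\coloneqq(f\wedge n)\vee(-n)\in L^\infty(D)\subset L^2(D)$. Since $|f_n|\le|f|$ and $f_n\to f$ pointwise, dominated convergence yields $f_n\to f$ in $\LLL$ and, because $V(\de)\gtrsim 1$ on the compact set $\supp\xi\subsub D$, also in $L^1(\supp\xi)$. Applying the $L^2$ identity to each $f_n$ and passing to the limit, the right-hand side $\int_D f_n\xi\,dx$ converges to $\int_D f\xi\,dx$. For the left-hand side, Theorem \ref{t:conti-GDP-L1} delivers $\GDP f_n\to\GDP f$ in $\calL$, and the $L^\infty$ bound on $\Lo\xi/\rho(\de)$ transforms this into
\begin{equation*}
\left|\int_D\bigl(\GDP f_n-\GDP f\bigr)\,\Lo\xi\,dx\right|\le \bigl\|\Lo\xi/\rho(\de)\bigr\|_{L^\infty(D)}\,\|\GDP f_n-\GDP f\|_{\calL}\longrightarrow 0,
\end{equation*}
which finishes the argument. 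The single step demanding care is the pointwise majorisation $|\Lo\xi(x)|\lesssim \rho(\de(x))$ near the boundary, since it is the mechanism that couples $\calL$--convergence of $\GDP f_n$ to convergence of the distributional pairing; however this is essentially already derived in the passage leading to \eqref{eq:dist-weight-rho} (where one has $|\Lo\xi(x)|\le\|\xi\|_\infty\int_{\supp\xi}J_D(x,y)\,dy$ for $x$ close to $\partial D$, and the ensuing heat-kernel/time integration produces $\rho$), so no new estimate beyond the preparatory material of Sections \ref{s:prelim}--\ref{s:Green} is required.
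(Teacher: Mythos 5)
Your proposal is correct, but it takes a genuinely different route from the paper. The paper's proof is a single Fubini computation: after noting that $\GDP f\in\calL$ (Theorem \ref{t:conti-GDP-L1}) and $|\Lo\xi|\lesssim\rho(\de)$ (which justifies absolute integrability), it interchanges the order of integration in $\int_D\GDP f\cdot\Lo\xi\,dx$ to obtain $\int_D f\cdot\GDP(\Lo\xi)\,dy$, and then concludes via the identity $\GDP(\Lo\xi)=\xi$, which follows from Proposition \ref{p:Lo inverse L2(D)} because $\Lo\xi\in L^2(D)$. You instead establish the $L^2$ case first, using the self-adjointness of $\Lo$ and $\Lo\GDP f=f$ to get $\la\GDP f,\Lo\xi\ra=\la f,\xi\ra$, and then pass to $f\in\LLL$ by truncation, using the continuity of $\GDP:\LLL\to\calL$ together with the bound $\Lo\xi/\rho(\de)\in L^\infty(D)$ to control the left-hand side, and the lower bound $V(\de)\gtrsim 1$ on $\supp\xi$ to control the right. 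The two arguments consume exactly the same ingredients ($\GDP f\in\calL$, $|\Lo\xi|\lesssim\rho(\de)$, and the inversion of $\GDP$ on $L^2$); the paper's Fubini handles the general case in one stroke, whereas your $L^2$--then--density split is slightly longer but isolates what is functional-analytic (self-adjointness on $\DD(\Lo)$) from what is quantitative (the $\calL$--$\rho(\de)$ pairing). One small point worth making explicit in your $L^2$ step is that $\GDP f\in\DD(\Lo)$ for $f\in L^2(D)$, which holds because $\GDP f=\LoInv f$ a.e.\ and $\LoInv$ maps $L^2(D)$ into $\DD(\Lo)$ by construction; you use this implicitly when invoking self-adjointness. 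No gap, just a different organization.
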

	\begin{proof}
		Let $f\in \LLL$ and  $\xi\in C_c^\infty(D)$. By Theorem \ref{t:conti-GDP-L1} it follows that $\GDP f\in \calL$, and by applying \eqref{eq:dist-weight-rho} one can easily show that $|\Lo \xi|\le C(\phi,\psi,\xi) \rho(\de)$.  Moreover, by Remark \ref{r:infty-gener}, we have $\xi \in \DD(\Lo)\subset L^2(D)$ and $\Lo \xi \in L^2(D)$, so by Proposition \ref{p:Lo inverse L2(D)} it holds that $\GDP\Lo \xi=\xi$.
		
		Fubini's theorem, and Propositions \ref{p:Lo inverse L2(D)}
		and \ref{p:composition-Lo} yield
		\begin{align*}
			\int_D \GDP f(x)\Lo \xi(x)dx&=\int_D \int_D \GDP(x,y)\Lo \xi (x)
			dxf(y)dy\\
            & =\int_D G_D^\psi \big(\Lo \xi\big)(y)f(y)dy\\
			&=\int_D  \xi(y)f(y)dy,
		\end{align*}
		which proves the claim.
	\end{proof}
	We finish with a regularity property of the Green potentials, which follows from joint continuity of $\GDP$ from Lemma \ref{l:GDP cont} and an analogous proof as in \cite[Proposition 2.16 \& Remark 2.17]{Bio23cpaa}.
        \begin{prop}\label{p:uniform}
            Let $f\in L_{loc}^\infty(D)\cap \LLL$. Then 
            \begin{align*}
                \lim_{z\to x\in D}\int_D|\GDP(z,y)-\GDP(x,y)||f(y)|dy=0,
            \end{align*}
            uniformly on compact subsets of $D$. In particular, $\GDP f \in C(D)$.
        \end{prop}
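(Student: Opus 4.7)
The strategy is the standard ``remove the diagonal singularity, then dominate and pass to the limit'' scheme, made uniform in $x$ over a compact set $K \subsub D$.

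Fix $K\subsub D$ and choose $r_0>0$ small enough that $B(x,3r_0)\subset D$ for every $x\in K$. For $x\in K$, $z\in B(x,r_0)$ and $\varepsilon\in(0,r_0)$, split
\begin{align*}
\int_D \bigl|\GDP(z,y)-\GDP(x,y)\bigr||f(y)|\,dy
&\le \int_{B(x,2\varepsilon)}\GDP(z,y)|f(y)|\,dy
+ \int_{B(x,2\varepsilon)}\GDP(x,y)|f(y)|\,dy \\
&\quad + \int_{D\setminus B(x,2\varepsilon)}\bigl|\GDP(z,y)-\GDP(x,y)\bigr||f(y)|\,dy.
\end{align*}
The first step is to bound the two singular integrals uniformly in $x\in K$ by a quantity tending to $0$ as $\varepsilon\to 0$. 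Since $f$ is locally bounded, set $M\coloneqq \|f\|_{L^\infty(B(x,3r_0))}$ which can be taken uniform over $x\in K$. On $B(x,2\varepsilon)$, $\de(y)$ and $|x-y|$ (respectively $|z-y|$) are comparable to $\de(x)$, so the sharp estimate \eqref{eq:GDP sharp} yields $\GDP(x,y)\lesssim |x-y|^{-d}\psi(V(|x-y|)^{-2})^{-1}$, and similarly for $\GDP(z,y)$ after noting $B(x,2\varepsilon)\subset B(z,3\varepsilon)$. Polar coordinates give
\begin{align*}
\int_{B(x,2\varepsilon)}\GDP(x,y)\,dy \;\lesssim\; \int_0^{C\varepsilon}\frac{dr}{r\,\psi(V(r)^{-2})},
\end{align*}
and this integral is finite and tends to $0$ as $\varepsilon\to 0$ by the weak scaling property \ref{as:WSC} applied to $\psi$ and $V$ (which ensures $\psi(V(r)^{-2})\gtrsim r^{-\theta}$ for some $\theta>0$ near $r=0$). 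The bound is uniform in $x\in K$.

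The second step handles the remainder. For $y\in D\setminus B(x,2\varepsilon)$ and $z\in B(x,\varepsilon)$ we have $|z-y|\ge |x-y|/2$ and $\de(z)\asymp \de(x)$ up to a constant depending only on $K$ and $\varepsilon$. Hence the sharp bound \eqref{eq:GDP sharp} gives
\begin{align*}
\GDP(z,y)\;\le\; C(K,\varepsilon)\,\GDP(x,y),\qquad y\in D\setminus B(x,2\varepsilon),
\end{align*}
so the integrand is dominated by $(1+C(K,\varepsilon))\GDP(x,y)|f(y)|$, whose integral is finite because $f\in\LLL$ and $\GDP|f|(x)<\infty$ by Lemma \ref{l:Green invariant}. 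By Lemma \ref{l:GDP cont}, $\GDP$ is jointly continuous off the diagonal, so $\GDP(z,y)\to\GDP(x,y)$ pointwise on $D\setminus B(x,2\varepsilon)$ as $z\to x$, and dominated convergence yields that the remainder vanishes as $z\to x$.

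Combining the two steps: given $\eta>0$, choose $\varepsilon$ small enough that the singular contribution is $<\eta/2$ uniformly in $x\in K$, then use the dominated convergence step to control the remainder by $\eta/2$ for $z$ sufficiently close to $x$. The main obstacle is securing uniformity in $x\in K$; this is addressed by taking the same $\varepsilon$ and $r_0$ for all $x\in K$, together with the uniform local bound on $f$ and the uniform-in-$x$ comparability constants arising from \eqref{eq:GDP sharp} on the compact set. The continuity assertion $\GDP f\in C(D)$ follows directly from $|\GDP f(z)-\GDP f(x)|\le \int_D|\GDP(z,y)-\GDP(x,y)||f(y)|\,dy$.
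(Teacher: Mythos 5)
Your approach---excise a small ball around the singularity, estimate the singular part from \eqref{eq:GDP sharp}, and pass to the limit in the remainder---is the standard scheme that the paper itself defers to (it cites the analogous proof in \cite{Bio23cpaa}). However, there is a genuine gap exactly at the point you flag yourself: uniformity of the remainder term in $x\in K$. The dominated convergence argument you use on $D\setminus B(x,2\varepsilon)$ gives, for each \emph{fixed} $x$, a $\delta_x>0$ such that the remainder is small when $|z-x|<\delta_x$; it does not give $\inf_{x\in K}\delta_x>0$. The ``uniform comparability constants'' and the uniform choice of $\varepsilon$ and $r_0$ handle the singular ball uniformly, but do not convert pointwise dominated convergence into uniform convergence on $K$. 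Two minor slips in the singular part: on $B(x,2\varepsilon)$ it is $\de(y)$, not $|x-y|$, that is comparable to $\de(x)$ (what you actually need, and what is true, is simply that the boundary-decay factors in \eqref{eq:GDP sharp} are $\le 1$); and the inclusion $B(x,2\varepsilon)\subset B(z,3\varepsilon)$ requires $z\in B(x,\varepsilon)$, not merely $z\in B(x,r_0)$, so you should work with $|z-x|<\varepsilon$ throughout.

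To repair the uniformity, split the remainder once more by cutting off a boundary layer. Given $\kappa>0$, choose a compact $K''\subsub D$ with $K\subset K''$ and $\int_{D\setminus K''}V(\de(y))|f(y)|\,dy<\kappa$, which is possible since $f\in\LLL$. For $y\in D\setminus K''$ and $z$ in an $\varepsilon$-neighbourhood of $K$, the Green function estimate \eqref{eq:GDP sharp} gives $\GDP(z,y)\le C(K,K'')\,V(\de(y))$, so
\begin{align*}
\int_{D\setminus K''}\bigl|\GDP(z,y)-\GDP(x,y)\bigr||f(y)|\,dy\le 2C(K,K'')\,\kappa
\end{align*}
uniformly in $x\in K$ and $|z-x|<\varepsilon$. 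On the remaining piece $K''\setminus B(x,2\varepsilon)$ the pairs $(z,y)$ with $z$ in an $\varepsilon$-neighbourhood of $K$, $y\in K''$, and $|z-y|\ge\varepsilon$ range over a compact subset of $D\times D$ away from the diagonal, on which $\GDP$ is uniformly continuous by Lemma \ref{l:GDP cont}; since $f\in L^1(K'')$ (because $f\in\LLL$ and $V(\de)$ is bounded below on $K''$), this term is uniformly small for $|z-x|<\delta$ with a single $\delta>0$ depending only on $K$, $K''$, $\varepsilon$. Combining the three pieces yields the stated uniform convergence.
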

	
	\section{Poisson kernel and harmonic functions}\label{s:Poisson harmonic}
The Poisson kernel of $\Lo$ is defined as a generalised normal derivative of the
	Green function:
	\begin{align*}
		\PDS(x,z) \coloneqq -\partial_V\GDP(x,z)=\lim_{D\ni y\to
			z}\frac{\GDP(x,y)}{V(\de(y))},\quad x\in D,\, z\in \partial D.
	\end{align*}
	\begin{prop}\label{p:Poisson kernel}
		The function $\PDS$ is well defined, jointly continuous on $D\times	\partial D$, and 
		\begin{align}\label{eq:Poisson kernel sharp}
			\PDS(x,z)\asymp
			\frac{V(\de(x))}{V(|x-z|)^2|x-z|^{d}\psi(V(|x-z|)^{-2})},\quad x\in D,\, z\in
			\partial D.
		\end{align}
		Further,
		\begin{align}\label{eq:Green Poisson identity}
			\int_D \GDPs(x,\xi)\PDS(\xi,z)d\xi=K_D(x,z),
		\end{align}
		where $K_D(x,z)=\lim\limits_{D\ni y\to z}\frac{G_D(x,y)}{V(\de(y))}$ is the modified Martin kernel in $D$ of the L\'evy process $X$.
	\end{prop}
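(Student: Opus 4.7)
The plan rests on two ingredients: the subordination formula $\GDP(x,y)=\int_0^\infty p_D(t,x,y)\uu(t)\,dt$ together with the heat kernel estimates \eqref{eq:heat-sharp-small}--\eqref{eq:heat-sharp-big}, and the composition identity of Lemma \ref{l:compos-of-GDs} connecting $\GDP$, $\GDPs$ and $G_D$. To establish existence of $\PDS(x,z)=\lim_{y\to z}\GDP(x,y)/V(\de(y))$, I would fix $x\in D$, $z\in\partial D$, and write
\[
\frac{\GDP(x,y)}{V(\de(y))}=\int_0^\infty \frac{p_D(t,x,y)}{V(\de(y))}\,\uu(t)\,dt.
\]
The task then splits into producing a pointwise limit $q(t,x,z):=\lim_{y\to z}p_D(t,x,y)/V(\de(y))$ for each $t>0$, and constructing a $\uu(t)\,dt$-integrable dominant of $p_D(t,x,y)/V(\de(y))$ that is uniform in $y$ in a neighbourhood of $z$. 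The former is a Hopf-type boundary regularity statement for the killed heat kernel, which in this generality can be extracted by combining Hunt's formula \eqref{eq:p_D Hunt} with the interior smoothness of $p$ and the boundary decay of the $X^D$-harmonic measure, in the spirit of \cite{BL-na21}; the latter is produced from \eqref{eq:heat-sharp-small}--\eqref{eq:heat-sharp-big} together with the small- and large-time asymptotics of $\uu$ implied by \ref{as:WSC} on $\psi$. Dominated convergence then delivers $\PDS(x,z)=\int_0^\infty q(t,x,z)\uu(t)\,dt$.

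The sharp estimate \eqref{eq:Poisson kernel sharp} then follows immediately from \eqref{eq:GDP sharp}: for $y$ close enough to $z$ we have $\de(y)<\tfrac12|x-y|$, so the first factor in \eqref{eq:GDP sharp} equals $V(\de(y))/V(|x-y|)$, and dividing by $V(\de(y))$ and letting $y\to z$ yields the claim. Joint continuity on $D\times\partial D$ is obtained by upgrading the convergence above to be locally uniform in $(x,z)$, which rests on the joint continuity of $p_D$ (Lemma \ref{ap:l:joint p_D}) and the uniformity of the dominant. For the Green--Poisson identity \eqref{eq:Green Poisson identity}, I would apply Lemma \ref{l:compos-of-GDs} with $\psi$ and $\psi^*$ interchanged to get $\int_D \GDPs(x,\xi)\GDP(\xi,y)\,d\xi=G_D(x,y)$, divide by $V(\de(y))$, and let $y\to z$. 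The left-hand side converges to $K_D(x,z)$ by the existence of the modified Martin kernel of $X$ from \cite{Bio21jmaa}; on the right-hand side, dominated convergence applies once the uniform-in-$y$ version of \eqref{eq:Poisson kernel sharp} is combined with \eqref{eq:GDP sharp} for $\GDPs$ to produce an integrable $\xi$-majorant.

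The main obstacle is the convergence of $p_D(t,x,y)/V(\de(y))$ to a nontrivial limit $q(t,x,z)$: the two-sided estimates \eqref{eq:heat-sharp-small}--\eqref{eq:heat-sharp-big} give only the correct order, not convergence, and in this level of generality the required Hopf-type result for the heat kernel is essentially case-specific. Once this step is secured, extracting \eqref{eq:Poisson kernel sharp}, verifying the joint continuity, and exchanging limit and integral in Lemma \ref{l:compos-of-GDs} proceed by standard potential-theoretic arguments.
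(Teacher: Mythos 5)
Your overall architecture matches the paper's: write $\GDP(x,y)=\int_0^\infty p_D(t,x,y)\uu(t)\,dt$, pass the limit $y\to z$ inside the integral via dominated convergence using \eqref{eq:heat-sharp-small}--\eqref{eq:heat-sharp-big} and the integrability of $\uu$, read off \eqref{eq:Poisson kernel sharp} from \eqref{eq:GDP sharp}, and obtain \eqref{eq:Green Poisson identity} by dividing Lemma \ref{l:compos-of-GDs} by $V(\de(y))$ and dominating. These outer steps are essentially identical to the paper's, including the three-regime majorant for the $\xi$-integral (the paper's \eqref{eq:upper1}--\eqref{eq:upper3}).

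The genuine gap is precisely where you flag it and then leave it open: the existence and joint continuity of $\partial_V p_D(t,x,z)=\lim_{y\to z}p_D(t,x,y)/V(\de(y))$. You sketch a route through Hunt's formula \eqref{eq:p_D Hunt} plus boundary decay of the $X^D$-harmonic measure, but immediately concede it is ``essentially case-specific'' and do not carry it out. Note also that Hunt's formula is awkward here: after dividing by $V(\de(y))$, \emph{both} terms $p(t,x,y)/V(\de(y))$ and $\ex_x[p(t-\tau_D,X_{\tau_D},y);t<\tau_D]/V(\de(y))$ blow up individually as $y\to z$, so one must quantify a delicate cancellation rather than apply boundary decay to a single term. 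The paper avoids this entirely: Lemma \ref{ap:l:joint p_D} establishes exactly the statement you need (joint continuity of $p_D(t,x,y)/V(\de(y))$ up to $\overline D\times\overline D$, not merely joint continuity of $p_D$ as you cite it) by invoking the spectral expansion $p_D(t,x,y)=\sum_j e^{-\lambda_j t}\varphi_j(x)\varphi_j(y)$ together with the boundary regularity $\varphi_j/V(\de)\in C^\varepsilon(D)$ with controlled norms from Theorem \ref{t:eigenfuncions} (which in turn rests on the generalized H\"older regularity for Green potentials from \cite{KKLL-jfa19}). So the missing ingredient in your proposal is this eigenfunction-regularity route, and without it the limit and its continuity in $(t,x,z)$ — the crux of the proposition — remain unproved. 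Once you replace your vague Hopf-type sketch with the spectral argument of Lemma \ref{ap:l:joint p_D}, the rest of your proof goes through as written.
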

    
\begin{proof}
		The proof follows the lines of \cite[Proposition 2.19]{Bio23cpaa}, with the major difference in the calculations arising from the Green function estimate \eqref{eq:GDP sharp}. By the representation \eqref{eq:GDP_representation} of $\GDP$, we have that
		\begin{align}\label{eq:poisson_dominated}
			 \lim\limits_{D\ni y\to z} \frac{\GDP(x,y)}{V(\de(y))}= \lim\limits_{D\ni y\to z}\int_0^\infty
			\frac{p_D(t,x,y)}{V(\de(y))}\uu(t)dt =\int_0^\infty
			\lim\limits_{D\ni y\to z}\frac{p_D(t,x,y)}{V(\de(y))}\uu(t)dt,
		\end{align}
        where the application of the dominated convergence theorem in \eqref{eq:poisson_dominated} is justified by the estimates \eqref{eq:heat-sharp-small} and \eqref{eq:heat-sharp-big}, integrability of  $\uu$ near 0, and boundedness of $\uu$ away from 0.
        Since the limit $\partial_V p(t,x,z):=\lim\limits_{D\ni y\to z}{p_D(t,x,y)}/{V(\de(y))}$ exists and is jointly continuous in $(0,\infty)\times D\times\partial D $ by Lemma \ref{ap:l:joint p_D}, we obtain the existence and joint continuity in $D\times\partial D$  of the Poisson kernel.  Furthermore, the sharp bounds \eqref{eq:Poisson kernel sharp} for $\PDS$ follow  easily from \eqref{eq:GDP sharp}.  For \eqref{eq:Green Poisson identity}, first note that $K_D(x,z)$ is well defined, see \cite[Subsection 4.6]{BVW-na21} and Remark \ref{r:Bio21jmaa}. By Lemma \ref{l:compos-of-GDs},
		\begin{align}\label{eq:1055}
			 \lim\limits_{D\ni y\to z}\frac{1}{V(\delta_D(y))}\int_D \GDPs(x,\xi)\GDP(\xi,y)d\xi=- \partial_V G_D(x,z)=K_D(x,z).
		\end{align}
		 In order to justify the application of the dominated convergence theorem to the left-hand side of \eqref{eq:1055}, which proves \eqref{eq:Green Poisson identity},  in what follows we use analogous calculations as in \cite[Proposition 2.19]{Bio23cpaa}, accounting for necessary changes based on the Green function estimates \eqref{eq:GDP sharp}.  Fix $x\in D$ and $z\in \partial D$, and take $\varepsilon>0$ such that $\de(x)>3\varepsilon$ and $y\in D$ such that $|z-y|<\varepsilon/2$.  First, for $\xi \in D\setminus B(z,\varepsilon)$ we have $|\xi-y|\asymp \varepsilon$, and therefore
		\begin{align}\label{eq:upper1}
			\frac{\GDPs(x,\xi)\GDP(\xi,y)}{V(\de(y))}\lesssim \frac{\GDPs(x,\xi)V(\de(\xi))}{V(|\xi-y|)^2|\xi -y|^d\psi(V(|\xi-y|)^{-2})}\lesssim \GDPs(x,\xi)V(\de(\xi)),
		\end{align}
	where the right-hand side is an integrable function of $\xi$ by Lemma \ref{l:Green invariant}. Next, note that for $\xi \in B(z,\varepsilon)\cap D$, $|\xi-x|\asymp \varepsilon$, so by \eqref{eq:GDP sharp} we have $\GDPs(x,\xi)\lesssim V(\de(\xi))$. This, together with \eqref{eq:GDP sharp} and \ref{as:WSC}, implies that for $\xi\in B(y,\de(y)/2)$ we have
		\begin{align}\label{eq:upper2}
			\frac{\GDPs(x,\xi)\GDP(\xi,y)}{V(\de(y))}&\lesssim \frac{V(\de(\xi))}{V(\de(y))}\frac{1}{|\xi-y|^d\psi(V(|\xi-y|)^{-2})}\lesssim \frac{1}{|\xi-y|^d\psi(V(|\xi-y|)^{-2})}.
		\end{align}
		Similarly, for $\xi\in D\cap B(z,\varepsilon)\setminus B(y,\de(y)/2)$ note that $\de(\xi)\le 3|\xi-y|\le 2\varepsilon$, so
		\begin{align}\label{eq:upper3}
			\frac{\GDPs(x,\xi)\GDP(\xi,y)}{V(\de(y))}&\lesssim\frac{V(\de(\xi))}{V(\de(y))}\frac{V(\de(y)) }{V(|\xi-y|) |\xi-y|^d\psi(V(|\xi-y|)^{-2})}\lesssim\frac{1 }{|\xi-y|^d\psi(V(|\xi-y|)^{-2})}.
		\end{align}
	The proof is finished by noting that the constants in \eqref{eq:upper1}, \eqref{eq:upper2} and \eqref{eq:upper3} are independent of $y$ and that the right-hand sides of the latter two inequalities are integrable functions of $\xi$ in $D$ by \ref{as:WSC} for $\psi$ and \eqref{e:wsc-V}.
	\end{proof}
The Poisson potential $\PDS \zeta$ of a measure $\zeta$ on $\partial D$ is defined by
		\begin{align*}
			\PDS \zeta(x)\coloneqq \int_{\partial D}\PDS(x,z)\zeta(dz),\quad x\in D.
		\end{align*}
From the sharp bound \eqref{eq:Poisson kernel sharp} we easily see that $\PDS\zeta$ is finite if and only if $\zeta$ is finite.

	\begin{prop}\label{p:poisson bndry and finiteness}
	For the $(d-1)$--Hausdorff measure $\sigma$ on $\partial D$, it follows that
		\begin{align}\label{eq:Poisson potential sharp}
			\PDS \sigma(x)\asymp \frac{1}{\de(x) V(\de(x)) \psi(V(\de(x))^{-2})},\quad x\in D.
		\end{align}
	\end{prop}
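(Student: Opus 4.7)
The plan is to use the sharp bound on $\PDS(x,z)$ from Proposition \ref{p:Poisson kernel} to reduce the claim to estimating a surface integral, then exploit the $C^{1,1}$ regularity of $\partial D$ together with scaling properties of $V$ and $\psi$. Note first that for $x$ with $\de(x)$ bounded away from $0$, both sides of \eqref{eq:Poisson potential sharp} are trapped between positive constants, so the comparison is immediate; we therefore focus on $x$ near $\partial D$.

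By Proposition \ref{p:Poisson kernel},
\begin{align*}
\PDS\sigma(x) \asymp V(\de(x)) \int_{\partial D} \frac{\sigma(dz)}{V(|x-z|)^2 |x-z|^d \psi(V(|x-z|)^{-2})}.
\end{align*}
Let $z_0\in\partial D$ be a point with $|x-z_0|=\de(x)$. Using the $C^{1,1}$ parametrization of $\partial D$ in a fixed neighborhood of $z_0$ (with $h\colon\R^{d-1}\to\R$, $h(0)=\nabla h(0)=0$, $|h(\tau)|\lesssim|\tau|^2$), I would identify $z = z_0 + (\tau, h(\tau))$, under which $|x-z|\asymp \de(x) + |\tau|$ for $|\tau|$ small, and $\sigma(dz)\asymp d\tau$. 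Passing to polar coordinates $r=|\tau|$ converts the local contribution of the surface integral to
\begin{align*}
\int_0^{\varepsilon_0} \frac{r^{d-2}\, dr}{V(\de+r)^2 (\de+r)^d \psi(V(\de+r)^{-2})}.
\end{align*}
The remaining piece of the boundary, far from $z_0$, is bounded: on it $|x-z|$ is uniformly bounded below, so the integrand is bounded and $\sigma(\partial D)<\infty$ contributes only a constant.

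I would split the radial integral into a near regime $r\le\de(x)$ and a far regime $r>\de(x)$. In the near regime, $V(\de+r)\asymp V(\de)$, $(\de+r)\asymp \de$, so the integrand is comparable to $(V(\de)^2\de^d\psi(V(\de)^{-2}))^{-1}$, and integrating $r^{d-2}\,dr$ over $[0,\de]$ yields a factor $\de^{d-1}$. Multiplying by $V(\de(x))$ gives the desired order $\frac{1}{\de(x)V(\de(x))\psi(V(\de(x))^{-2})}$ as both upper and lower bound. For the far regime I would use the key monotonicity observation: writing $V(r)^2\psi(V(r)^{-2})=\psi(s)/s$ with $s=V(r)^{-2}$, concavity of $\psi$ (with $\psi(0)=0$) implies $\psi(s)/s$ is non-increasing in $s$, hence $r\mapsto V(r)^2\psi(V(r)^{-2})$ is non-decreasing. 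Therefore on $[\de,\varepsilon_0]$ this quantity is $\gtrsim V(\de)^2\psi(V(\de)^{-2})$, and
\begin{align*}
\int_\de^{\varepsilon_0}\frac{dr}{r^2 V(r)^2\psi(V(r)^{-2})}\;\le\;\frac{1}{V(\de)^2\psi(V(\de)^{-2})}\int_\de^{\varepsilon_0}\frac{dr}{r^2}\;\lesssim\;\frac{1}{\de\, V(\de)^2\psi(V(\de)^{-2})},
\end{align*}
so the far regime is absorbed into the same order after multiplication by $V(\de)$.

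The only delicate point is the lower bound, which is produced entirely by the near regime computation; the upper bound follows from combining near and far regimes together with the bounded contribution from the part of $\partial D$ outside the local chart. The main obstacle is really bookkeeping: ensuring the local chart argument gives two-sided comparability uniformly as $\de(x)\downarrow 0$ (which is where I use that $D$ is $C^{1,1}$ and bounded, so the constants in $|x-z|\asymp\de(x)+|\tau|$, $\sigma(dz)\asymp d\tau$, and the chart size $\varepsilon_0$ may be chosen independently of $x$), and verifying that the monotonicity of $\psi(s)/s$ suffices in place of any exponent-dependent scaling argument.
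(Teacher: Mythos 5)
Your proof is correct and follows essentially the same route as the paper: both reduce the claim via the sharp Poisson kernel bound \eqref{eq:Poisson kernel sharp} to the geometric surface integral $\int_{\partial D}|x-z|^{-d}\,\sigma(dz)\asymp\de(x)^{-1}$, and both hinge on the monotonicity of $r\mapsto r^2\psi(r^{-2})$. The only cosmetic difference is that the paper pulls the factor $\bigl(V(\de)\psi(V(\de)^{-2})\bigr)^{-1}$ out of the integral up front (using $\de(x)\le|x-z|$) and cites a lemma for the remaining surface integral, whereas you apply the monotonicity only in the far regime and carry out the local-chart/polar-coordinate computation inline.
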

	\begin{proof} 
    	Let $x\in D$. Since $r\mapsto r^2\psi(r^{-2})$ and $V$ are non-decreasing, and $\delta_D(x)\le |x-z|$ for $z\in\partial D$,  \eqref{eq:Poisson kernel sharp} implies that
		\begin{align*}
			&\PDS\sigma(x)\lesssim \int_{\partial D}\frac{V(\de(x))}{V(|x-z|)^2|x-z|^d\psi(V(|x-z|)^{-2})}\sigma(dz)\lesssim\frac{1}{V(\de(x))\psi(V(\de(x))^{-2})}\int_{\partial D} \frac{\sigma(dz)}{|x-z|^d}.
		\end{align*}
		Set $\Gamma=\{z\in\partial D: |x-z|\le 2\de(x)\}$. The same arguments as above imply that
		\begin{align*}
			&\PDS\sigma(x)\gtrsim \int_{\Gamma}\frac{V(\de(x))}{V(|x-z|)^2|x-z|^d\psi(V(|x-z|)^{-2})}\sigma(dz)\gtrsim\frac{1}{V(\de(x))\psi(V(\de(x))^{-2})}\int_{\Gamma} \frac{\sigma(dz)}{|x-z|^d}.
		\end{align*}
		Now \eqref{eq:Poisson potential sharp} follows from  $\int_{\Gamma}\frac{\sigma(dz)}{|x-z|^d}\asymp\int_{D} \frac{\sigma(dz)}{|x-z|^d}\asymp \de(x)^{-1}$, as proved in \cite[Lemma A.2]{Bio23cpaa}.
	\end{proof}

        \begin{cor}\label{c:PDS-inter-sharp}
            If $\Lo =\INTFR$, it holds that
            \begin{align*}
            	\PDS(x,z)&\asymp \frac{\de(x)^{\beta/2}}{|x-z|^{d+\beta-\frac{\alpha\beta}{2}}},\quad x\in D,\, z\in \partial D,\\
            	\PDS\sigma(x)&\asymp \de(x)^{-1-\frac\beta2+\frac{\beta\alpha}2},\quad x\in D.
            \end{align*} 
        \end{cor}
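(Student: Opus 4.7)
The plan is to obtain both asymptotics by direct specialization of the general sharp bounds already available in Proposition \ref{p:Poisson kernel} and Proposition \ref{p:poisson bndry and finiteness} to the concrete Bernstein functions $\phi(\lambda)=\lambda^{\beta/2}$ and $\psi(\lambda)=\lambda^{\alpha/2}$. No new analytic machinery should be required; the whole task reduces to identifying $V$ and the composition $\psi\circ V^{-2}$ explicitly in the interpolated fractional regime.

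First I would compute the renewal function. Since $\phi(\lambda)=\lambda^{\beta/2}$ satisfies \ref{as:WSC} with exact scaling, the relation $V(t)\asymp 1/\sqrt{\phi(t^{-2})}$ from Section \ref{s:prelim} gives
\begin{equation*}
V(t)\asymp \frac{1}{\sqrt{t^{-\beta}}}=t^{\beta/2},\qquad t\in(0,1],
\end{equation*}
and hence also on $(0,\diam D]$ up to multiplicative constants depending on $D$. Consequently $\psi(V(t)^{-2})\asymp (t^{-\beta})^{\alpha/2}=t^{-\alpha\beta/2}$.

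Substituting these identifications into the estimate
\begin{equation*}
\PDS(x,z)\asymp \frac{V(\de(x))}{V(|x-z|)^{2}|x-z|^{d}\psi(V(|x-z|)^{-2})}
\end{equation*}
from Proposition \ref{p:Poisson kernel} immediately yields
\begin{equation*}
\PDS(x,z)\asymp \frac{\de(x)^{\beta/2}}{|x-z|^{\beta}\,|x-z|^{d}\,|x-z|^{-\alpha\beta/2}}=\frac{\de(x)^{\beta/2}}{|x-z|^{d+\beta-\frac{\alpha\beta}{2}}},
\end{equation*}
which is the first claim. Similarly, plugging the explicit forms of $V(\de(x))$ and $\psi(V(\de(x))^{-2})$ into the formula $\PDS\sigma(x)\asymp [\de(x)V(\de(x))\psi(V(\de(x))^{-2})]^{-1}$ from Proposition \ref{p:poisson bndry and finiteness} gives
\begin{equation*}
\PDS\sigma(x)\asymp \frac{1}{\de(x)\cdot \de(x)^{\beta/2}\cdot \de(x)^{-\alpha\beta/2}}=\de(x)^{-1-\frac{\beta}{2}+\frac{\beta\alpha}{2}},
\end{equation*}
which is the second claim.

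Since the work has already been done in the two cited propositions, there is no genuine obstacle here; the only point requiring a word of care is ensuring that the comparability $V(t)\asymp t^{\beta/2}$, which is stated only for $t<1$ in Section \ref{s:prelim}, extends to the full range of distances $|x-z|\le \diam D$ relevant to the estimates. This is routine because $D$ is bounded and $V$ is continuous and increasing, so the comparability constant can be adjusted to absorb any diameter-dependent factor.
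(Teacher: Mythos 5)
Your derivation is correct and follows precisely the route the paper intends: the corollary is stated immediately after Propositions \ref{p:Poisson kernel} and \ref{p:poisson bndry and finiteness} with no separate proof, because it is meant to be obtained by exactly this direct substitution of $V(t)\asymp t^{\beta/2}$ and $\psi(V(t)^{-2})\asymp t^{-\alpha\beta/2}$ into \eqref{eq:Poisson kernel sharp} and \eqref{eq:Poisson potential sharp}. Your closing remark about extending the comparability from $t<1$ to $t\le\diam D$ by boundedness of $D$ is the right way to tie up the only loose end.
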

	
	\begin{lem}\label{l:integrability of Poisson potentials}
		If $\zeta$ is a finite measure on $\partial D$, then $\PDS\zeta \in \LLL \cap \calL$ and 
		\begin{align*}
			\|\PDS\zeta\|_{\LLL}+\|\PDS\zeta\|_{\calL}\le C |\zeta|(\partial D),
		\end{align*}
		for some constant $C=C(d,D,\psi,\phi)>0$.  Moreover, $\PDS\zeta \in C(D)$.
	\end{lem}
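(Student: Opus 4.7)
The plan is to handle both integrability statements simultaneously by reducing them, via Fubini's theorem, to uniform-in-$z$ estimates of the integrals $\int_D \PDS(x,z) w(x)\,dx$ with weights $w=V(\de)$ and $w=\rho(\de)$, and then to use the two cornerstone Green-potential identities $\GDP V(\de)\asymp V(\de)$ from Lemma \ref{l:Green invariant} and $\GDP \rho(\de)\asymp V(\de)$ from Theorem \ref{t:conti-GDP-L1} in combination with the definition of $\PDS$ as a boundary limit of $\GDP$. Writing $\zeta=\zeta^+-\zeta^-$, it suffices to treat non-negative finite measures, and then by Fubini
\begin{align*}
\|\PDS\zeta\|_{\LLL}&=\int_{\partial D}\!\int_D \PDS(x,z)\,V(\de(x))\,dx\,\zeta(dz),\\
\|\PDS\zeta\|_{\calL}&=\int_{\partial D}\!\int_D \PDS(x,z)\,\rho(\de(x))\,dx\,\zeta(dz),
\end{align*}
so the task is to bound each inner integral by a constant independent of $z\in\partial D$.

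The key observation is that by the symmetry of $\GDP$ and Lemma \ref{l:Green invariant},
\begin{equation*}
\int_D \frac{\GDP(x,y)}{V(\de(y))}\,V(\de(x))\,dx \;=\; \frac{\GDP V(\de)(y)}{V(\de(y))} \;\asymp\; 1,\quad y\in D,
\end{equation*}
and, analogously, by Theorem \ref{t:conti-GDP-L1},
\begin{equation*}
\int_D \frac{\GDP(x,y)}{V(\de(y))}\,\rho(\de(x))\,dx \;=\; \frac{\GDP \rho(\de)(y)}{V(\de(y))} \;\asymp\; 1,\quad y\in D,
\end{equation*}
with comparability constants depending only on $d,D,\phi,\psi$. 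Fixing $z\in\partial D$ and letting $D\ni y\to z$, the integrands converge pointwise to $\PDS(x,z)V(\de(x))$ and $\PDS(x,z)\rho(\de(x))$ respectively by the very definition of $\PDS$ in Proposition \ref{p:Poisson kernel}, and they are non-negative. Fatou's lemma then upgrades the two uniform bounds to
\begin{equation*}
\int_D \PDS(x,z)V(\de(x))\,dx \;\lesssim\; 1, \qquad \int_D \PDS(x,z)\rho(\de(x))\,dx \;\lesssim\; 1,
\end{equation*}
uniformly in $z\in\partial D$, which is the desired bound.

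For the continuity claim, fix a compact set $K\subsub D$. Then $\de(x)\ge c_K>0$ and $|x-z|\ge c_K'>0$ for $x\in K$ and $z\in\partial D$, so the sharp estimate \eqref{eq:Poisson kernel sharp} of Proposition \ref{p:Poisson kernel} yields a constant $M_K$ with $\PDS(x,z)\le M_K$ on $K\times\partial D$. Since $\zeta$ is a finite measure and $\PDS(\cdot,z)$ is continuous on $D$ (from the joint continuity of $\PDS$ on $D\times\partial D$), the dominated convergence theorem applied with the constant dominating function $M_K$ gives $\PDS\zeta\in C(K)$; as $K$ was arbitrary, $\PDS\zeta\in C(D)$.

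The main subtlety is the interchange of the boundary limit $y\to z$ with integration; since we only have a one-sided estimate from Fatou rather than a limit identity, it is important that we are bounding and not computing, and that the integrands are non-negative. Everything else is a direct application of the previously established estimates.
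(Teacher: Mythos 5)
Your proof is correct and takes a genuinely different route from the paper. The paper's proof works directly with the sharp pointwise estimate \eqref{eq:Poisson kernel sharp} for $\PDS(x,z)$: for the $\LLL$-bound it integrates that estimate and uses integrability of $r\mapsto\big(r\,\psi(V(r)^{-2})\big)^{-1}$ near $0$, while for the $\calL$-bound it splits the inner integral into a near-$z$ and far-from-$z$ piece, further decomposes $\rho(\de)$ into its two terms, and estimates each resulting integral explicitly (using \eqref{eq:1802} in the process). Your argument instead avoids any new kernel calculation: you observe that, by symmetry of $\GDP$, the quantities $\int_D \GDP(x,y)V(\de(x))\,dx/V(\de(y))$ and $\int_D \GDP(x,y)\rho(\de(x))\,dx/V(\de(y))$ are uniformly bounded by Lemma \ref{l:Green invariant} and Theorem \ref{t:conti-GDP-L1}, and then pass to the boundary limit $y\to z$ using the definition $\PDS(x,z)=\lim_{y\to z}\GDP(x,y)/V(\de(y))$ together with Fatou's lemma. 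This is shorter, more conceptual, and reuses work already done; it only yields the one-sided bound, which is exactly what the lemma requires, and you correctly flag that this is why Fatou (rather than dominated convergence) suffices here. The paper's more computational route has the side benefit of exhibiting the local structure of the integrals, but for the stated lemma your approach is clean and entirely adequate. The continuity argument on compacta via dominated convergence is the same as the paper's.
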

	\begin{proof}
        By applying \eqref{eq:Poisson kernel sharp}, note that $\|\PDS\zeta\|_{\LLL}$ is dominated from above by 
        \begin{align*}
            \int_{\partial D}\int_D \PDS(x,z)V(\de(x))dx|\zeta|(dz)
        	 \lesssim\int_{\partial D}\int_{D} \frac{dx}{|x-z|^d\psi(V(|x-z|)^{-2})}|\zeta|(dz)\lesssim |\zeta|(\partial D),
        \end{align*}
        since the function $r\mapsto (r\psi(V(r)^{-2}))^{-1}$ is integrable on $(0,\text{diam}(D))$, by \ref{as:WSC} for $\psi$ and \eqref{e:wsc-V}.
        Similarly, $\|\PDS\zeta\|_{\calL}$ is bounded from above by
        $$\int_D \PDS|\zeta|(x)\rho(\de(x))dx=\int_{\partial D}\left(\int_{D\cap B(z,1)^c}+\int_{D\cap B(z,1)}\right)\PDS(x,z)\rho(\de(x))dx|\zeta|(dz)\eqqcolon I_1 + I_2.
        $$
        Note that $\rho(\de)$ is bounded and vanishes at the boundary, and that on $D\cap B(z,1)^c$ we have $\PDS(x,z)\lesssim V(\de(x))$, hence, $I_1\lesssim |\zeta|(\partial D)$.  By \eqref{eq:Poisson kernel sharp} and \eqref{eq:dist-weight-rho}, the inner integral in $I_2$ is split into two parts:
        \begin{align*}
            &\int_{D\cap B(z,1)}\PDS(x,z)\rho(\de(x))dx\asymp
            \int_{D\cap B(z,1)}\frac{V(\de(x))^3 \psi(V(\de(x))^{-2})dx}{V(|x-z|)^2|x-z|^d\psi(V(|x-z|)^{-2})}\\
            &\hspace{6em}+\int_{D\cap B(z,1)}\frac{V(\de(x))^2\int_{V(\de(x))}^1 \psi(s^{-2})ds}{V(|x-z|)^2|x-z|^d\psi(V(|x-z|)^{-2})}dx\eqqcolon J_1+J_2.
        \end{align*}
        Since $\de(x)\le |x-z|$ and $r\mapsto r^2\psi(r^{-2})$ and $V$ are non-decreasing, \eqref{e:wsc-V} implies that
        $J_1\lesssim \int_{B(z,1)}\frac{V(|x-z|)}{|x-z|^d}dx\lesssim 1.$
        Further, by first applying the inequality \eqref{eq:1802}, and then using the analogous reasoning as for the estimate of $J_1$, we also get that
        $J_2\lesssim \int_{B(z,1)}\frac{V(|x-z|)^{\eta}}{|x-z|^d}dx\lesssim 1.$
        
        Hence $\PDS\zeta\in \calL$  and $\|\PDS\zeta\|_{\calL}\le C(d,\psi,\phi,D) |\zeta|(\partial D)$. The continuity of $\PDS\zeta$ follows by the dominated convergence theorem and Proposition \ref{p:Poisson kernel}.
        \end{proof}
	
		It is known that Poisson kernels are, in general, connected to harmonic functions via their integral representation. Our next goal is to prove this relation, as well as to relate harmonic functions with respect to $\Lo$ to the harmonic functions with respect to $\Lbase$.  We note that we consider harmonic functions in the distributional and probabilistic sense.
        
	\begin{defn}\label{def:harmonic}
		The function $h\in \calL$ is harmonic in $D$ with respect to $\Lo$ if $\Lo h=0$
		in $D$ in the distributional sense.
	\end{defn}
	\begin{prop}\label{p:harmonic connect}
		The function $h\in \calL$ is harmonic in $D$ with respect to $\Lo$ if and only if
		$\GDPs h$ is harmonic in $D$ with respect to $\Lbase$. Further,
		for every finite measure $\zeta$ on $\partial D$, the function $\PDS \zeta$ is
		harmonic in $D$ with respect to $\Lo$.
	\end{prop}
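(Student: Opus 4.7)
The plan is to reduce harmonicity with respect to $\Lo$ to harmonicity with respect to $\Lbase$ via the composition identity from Proposition \ref{p:composition-Lo}, and then to identify $\GDPs(\PDS\zeta)$ with a Martin-type potential of $X^D$, whose harmonicity with respect to $\Lbase$ is already available from \cite{Bio21jmaa}.

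The first step is to establish, for every $\xi \in C_c^\infty(D)$, the duality identity
\begin{equation*}
    \int_D \GDPs h(x)\, \Lbase \xi(x) \, dx = -\int_D h(y)\, \Lo \xi(y) \, dy.
\end{equation*}
Since $C_c^\infty(D)$ lies in the $L^2(D)$-domain of $\Lbase$ by Lemma \ref{ap:l:generator domain}, Proposition \ref{p:composition-Lo} applied with $\psi$ and $\psi^*$ interchanged yields $\Los \Lo \xi = -\Lbase \xi$ in $L^2(D)$. Since $\Lbase \xi \in L^2(D)$, applying the $L^2$-inverse $\GDPs$ (Proposition \ref{p:Lo inverse L2(D)} for $\psi^*$) produces $\GDPs(\Lbase \xi) = -\Lo \xi$ almost everywhere in $D$. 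Combining this with the symmetry of $\GDPs$ and Fubini's theorem gives the identity, with absolute convergence of the right-hand side for $h \in \calL$ guaranteed by the bound $|\Lo \xi|\lesssim \rho(\de)$ from \eqref{eq:dist-weight-rho}. Testing against all $\xi \in C_c^\infty(D)$ then gives the desired equivalence: $h$ is $\Lo$-harmonic if and only if $\GDPs h$ is $\Lbase$-harmonic.

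For the second claim, Lemma \ref{l:integrability of Poisson potentials} ensures $\PDS\zeta \in \calL$, so by the first part it suffices to verify that $\GDPs(\PDS\zeta)$ is $\Lbase$-harmonic. Applying Fubini and the identity \eqref{eq:Green Poisson identity} yields
\begin{equation*}
    \GDPs(\PDS \zeta)(x) = \int_{\partial D}\!\! \int_D \GDPs(x,\xi)\, \PDS(\xi,z)\, d\xi \, \zeta(dz) = \int_{\partial D} K_D(x,z)\, \zeta(dz),
\end{equation*}
that is, a potential of the modified Martin kernel of $X^D$. The harmonicity of such potentials with respect to $\Lbase$ is standard and follows from \cite{Bio21jmaa}, so the equivalence from the first step closes the argument.

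The main technical obstacle is the Fubini justification in the duality identity for a general $h \in \calL$, since a priori $\GDPs h$ is only known to lie in some weighted $L^1$ space. The cleanest route is likely a density argument: for $h \in L^2(D) \cap \calL$ the identity is immediate from the $L^2$-calculus, and the general case then follows by approximation, using the continuity estimates from Lemma \ref{l:Green invariant} and Theorem \ref{t:conti-GDP-L1} together with the boundedness $|\Lbase \xi| = |L\xi|\lesssim 1$ on $D$ for $\xi \in C_c^\infty(D)$ to pass to the limit in $\calL$.
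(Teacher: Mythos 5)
Your overall strategy matches the paper's: derive $\GDPs(\Lbase\xi) = -\Lo\xi$ from Propositions~\ref{p:Lo inverse L2(D)} and~\ref{p:composition-Lo}, use Fubini to transfer the $\Lo$-harmonicity condition onto $\GDPs h$, and for the second claim reduce to the $\Lbase$-harmonicity of $K_D\zeta$ via \eqref{eq:Green Poisson identity} and \cite{Bio21jmaa}.

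However, the crucial Fubini step — which you correctly flag as "the main technical obstacle" — is left genuinely unresolved. To pass $\GDPs$ across the integral in $\int_D h\, \GDPs(\Lbase\xi)\,dx = \int_D \GDPs h \cdot \Lbase\xi\,dx$, one must show $\GDPs h \in L^1(D)$ for $h \in \calL$, since $\Lbase\xi = L\xi$ is merely bounded (and not compactly supported) in $D$. This reduces to the pointwise estimate $\GDPs\1 \lesssim \rho(\de)$, because $\int_D |\GDPs h|\,dx \le \int_D |h(y)|\,\GDPs\1(y)\,dy$ and the right side is controlled by $\|h\|_{\calL}$ exactly when the weight $\GDPs\1$ is dominated by $\rho(\de)$. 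The paper obtains this by applying Theorem~\ref{t:bndry-G(U)} to $\psi^*$ and simplifying via $\psi^*(\lambda) = \lambda/\psi(\lambda)$, $V'(t)\asymp V(t)/t$, and \eqref{e:wsc-V} to get $\GDPs\1 \asymp \rho(\de)$.

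Your proposed density argument does not sidestep this: approximating $h$ by $h_n\in L^2(D)\cap\calL$ and passing to the limit in $\int_D \GDPs h_n\,\Lbase\xi\,dx$ against $\Lbase\xi\in L^\infty(D)$ still requires the map $g\mapsto\GDPs g$ to be continuous from $\calL$ into $L^1(D)$ — which is precisely the bound $\GDPs\1\lesssim\rho(\de)$ again. The results you invoke do not supply it: Lemma~\ref{l:Green invariant} applied to $\psi^*$ gives continuity of $\GDPs$ on $\LLL$ (same weighted space in and out), and Theorem~\ref{t:conti-GDP-L1} applied to $\psi^*$ gives $\GDPs:\LLL\to L^1(D,\rho^*(\de)dx)$ where $\rho^*$ is built from $\psi^*$, neither of which is the needed $\calL\to L^1(D)$ mapping. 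In short, the appeal to Theorem~\ref{t:bndry-G(U)} for $\psi^*$ (or some equivalent computation identifying $\GDPs\1$ with $\rho(\de)$) is unavoidable, and your argument has a gap precisely there. The remainder of your proof — the reduction of the second claim to $K_D\zeta$ and its $\Lbase$-harmonicity, modulo the routine decomposition $\zeta = \zeta^+ - \zeta^-$ for signed $\zeta$ — is correct and in line with the paper.
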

	\begin{proof}
		For $h\in \calL$ and $\xi\in C_c^\infty(D)$, we have by Propositions \ref{p:Lo inverse L2(D)} and  \ref{p:composition-Lo} that 
		\begin{align*}
			&\int_D h(x) \Lo \xi(x)dx=
			\int_D h(x) \GDPs  \big(-\Lbase\xi\big)(x)dx=\int_D \GDPs h(x) \big(-\Lbase\xi\big)(x)dx,
		\end{align*}
		where in the second equality  we used Fubini's theorem. This is justified by noting that $\Lbase \xi= L\xi \in L^\infty(D)$, by \cite[Eq. (6)]{Bio21jmaa}, and that $\GDPs h\in L^1(D)$. The latter is a consequence of the estimate $\GDPs \1 \asymp \rho(\de)$.
		Indeed, by Theorem \ref{t:bndry-G(U)} for $\psi^*$,
		\begin{align*}
			\GDPs \1&\asymp 
			\frac{1}{\de(x)V(\de)\psi^*(V(\de)^{-2})}\!\!\!\int\limits_0^{\de(x)}\!\!\!V(t)\, dt+V(\de(x))+V(\de(x))\!\!\!\int\limits_{\de(x)}^{\diam D}
			\!\!\!\frac{dt}{t V(t)\psi^*(V(t)^{-2})},
		\end{align*}
        	which is by \eqref{e:wsc-V} and $V'(t)\asymp V(t)/t$, $t<1$, comparable to $\rho(\de)$.
		
		For the second claim, take a non-negative finite measure $\zeta$ on $\partial D$. From Proposition \ref{p:Poisson kernel}, it follows that $\GDPs \PDS \zeta = K_D\zeta$.  Therefore, by the first part of the proposition, it suffices to show that $K_D\zeta$ is $\Lbase$-harmonic. This follows from the Martin representation of the non-negative harmonic functions with respect to $X^D$, proved in \cite{Bio21jmaa} (see also Remark \ref{r:Bio21jmaa}). Indeed, by \cite[Eq. (4.24)]{BVW-na21}, we have $K_D\zeta=M_D\wt\zeta$, where $M_D(x,z)=\lim_{y\to z}G_D(x,y)/G_D(x_0,y)$, for $x\in D$, $z\in \partial D$, and some fixed $x_0\in D$, is the classical Martin kernel of $X^D$, while $\wt\zeta(d z)=K_D(x_0,z)\zeta(dz)$. Further, $M_D\wt\zeta$ is $\Lbase$-harmonic by \cite[Proposition 5.11 \& Theorem 3.16]{Bio21jmaa}, hence $K_D\zeta$ is $\Lbase$-harmonic. For a general finite measure $\zeta$, we use the decomposition $\zeta=\zeta^+-\zeta^-$.
	\end{proof}

	Now we prove the Martin integral representation of non-negative $\Lo$-harmonic functions.
	\begin{prop}\label{p:harm int rep}
		A non-negative function $h\in \calL$ is  $\Lo$-harmonic if and only if there exists a finite measure $\zeta$ on $\partial D$ such that 
		\begin{align*}
			h(x)=\PDS \zeta(x), \quad \text{for a.e. $x\in D$}.
		\end{align*}
	\end{prop}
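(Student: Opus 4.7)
The \emph{if} direction is immediate from the last assertion of Proposition \ref{p:harmonic connect}, so I focus on the \emph{only if} direction: given a non-negative $h\in\calL$ with $\Lo h = 0$ distributionally, I must produce a finite measure $\zeta$ on $\partial D$ such that $h = \PDS\zeta$ a.e. in $D$.

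The strategy is to transfer the problem to the subordinating operator $\Lbase$, where the Martin integral representation is already available from \cite{Bio21jmaa}, and then invert it back through the Poisson kernel of $\Lo$. By Proposition \ref{p:harmonic connect}, $u := \GDPs h$ is a non-negative $\Lbase$-harmonic function in $D$. Invoking the Martin representation for such functions exactly as in the proof of Proposition \ref{p:harmonic connect} (via \cite[Proposition 5.11 \& Theorem 3.16]{Bio21jmaa}), I obtain a finite measure $\zeta$ on $\partial D$ with $\GDPs h = K_D\zeta$ in $D$. The Green-Poisson identity \eqref{eq:Green Poisson identity} rewrites $K_D\zeta = \GDPs(\PDS\zeta)$, so
\[
\GDPs(h - \PDS\zeta) = 0 \quad \text{a.e.\ in } D.
\]

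To conclude $h = \PDS\zeta$ a.e., I first verify that $f := h - \PDS\zeta$ lies in $\LLL$. Since $\GDPs h = K_D\zeta$ is finite a.e. and $h\ge 0$, the analog of Lemma \ref{l:Green invariant} for $\psi^*$ (which satisfies \ref{as:WSC} with the appropriate constants, as recalled before Lemma \ref{l:compos-of-GDs}) forces $h \in \LLL$; together with $\PDS\zeta \in \LLL$ from Lemma \ref{l:integrability of Poisson potentials}, this yields $f \in \LLL$. Applying Theorem \ref{t:GDPf dist-sol} in the symmetric form with $\psi^*$ in place of $\psi$, the function $\GDPs f$ distributionally solves $\Los \GDPs f = f$. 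Testing this equation against arbitrary $\xi \in C_c^\infty(D)$ and using $\GDPs f = 0$ gives $\int_D f\,\xi\,dx = 0$ for every such $\xi$, so $f = 0$ a.e., i.e.\ $h = \PDS\zeta$ a.e.

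The principal obstacle is conceptual rather than computational: the Martin representation in \cite{Bio21jmaa} is stated for probabilistically (classically) $\Lbase$-harmonic functions, while Proposition \ref{p:harmonic connect} only yields distributional $\Lbase$-harmonicity of $\GDPs h$. Bridging these two notions relies on the equivalence between the analytic and probabilistic notions of $\Lbase$-harmonicity, which is one of the preparatory inputs the authors draw from \cite{Bio21jmaa}. Once that equivalence is in hand, all remaining steps are routine Fubini and calculus-of-variations bookkeeping built on identities already established above.
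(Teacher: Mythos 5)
Your proof is correct and follows essentially the same route as the paper's: transfer to $\Lbase$ via Proposition \ref{p:harmonic connect}, invoke the Martin representation from \cite{Bio21jmaa} for $\GDPs h$, rewrite $K_D\zeta=\GDPs(\PDS\zeta)$ via the Green--Poisson identity, and conclude injectivity from Theorem \ref{t:GDPf dist-sol}. You are in fact slightly more careful than the published proof in explicitly verifying that $h-\PDS\zeta\in\LLL$ before applying Theorem \ref{t:GDPf dist-sol} (the paper leaves this implicit), and that is a genuine requirement since membership in $\calL$ does not automatically give membership in $\LLL$.
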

	\begin{proof} Note that one implication is shown in Proposition \ref{p:harmonic connect}. Let $h\in \calL$ be a non-negative $\Lo$-harmonic function. By Proposition \ref{p:harmonic connect} it follows that the non-negative function $\GDPs h$ is $\Lbase$-harmonic. By employing the representation of non-negative $\Lbase$-harmonic functions, see \cite[Proposition 5.11 \& Theorem 3.16]{Bio21jmaa}, \cite[Eq. (4.24)]{BVW-na21}, and Remark \ref{r:Bio21jmaa}, there is a finite measure $\zeta$ on $\partial D$ such that $\GDPs h=K_D\zeta$ a.e.~in $D$. By applying \eqref{eq:Green Poisson identity} and Fubini's theorem to the right-hand side of this equality, we obtain that $\GDPs h=\GDPs\PDS\zeta$ a.e.~in $D$. By Theorem \ref{t:GDPf dist-sol}., it follows that $h=\PDS \zeta$ a.e.~in $D$.
	\end{proof}
	
	Next, we turn to the probabilistic definition of a harmonic function and relate it to harmonic functions in Definition \ref{def:harmonic}. A function $h$ is $Y$-harmonic in $D$ if for all $U\subsub D$ it holds that
	\begin{align*}
		h(x)=\ex_x[h(Y_{\tau_U})],\quad x\in U,
	\end{align*}
	where $\tau_U=\inf\{t>0:Y_t\notin U\}$ and we assume the expectation in the definition is implicitly finite. 
    \begin{rem}
       For a regular enough domain $U$, the distribution of $Y_{\tau_U}$ is determined by the Poisson kernel of the process $Y$ killed upon exiting the set $U$, as a consequence of the L\'evy system formula, see e.g. \cite[p. 1241]{ksv_minimal2016} or \cite{ikeda-watanabe}. Using this kind of representation, one can see that $Y$-harmonic functions are in $\calL$, see the approach in \cite[Eqs. (3.22) \& (3.23)]{ksv_minimal2016} and use that therein $U^{D,B}(x,z)\le \GDP(x,z)$ and $J^D(z,y)=J_D(z,y)\lesssim \rho(\de(y))$, for $z$ away from the boundary.
    \end{rem}

	\begin{thm}\label{t:1627}
		The non-negative function $h\in \calL$ is harmonic in $D$ with respect to $\Lo$
		if and only if it satisfies the mean-value property with respect to $Y$ in $D$ (after a modification on a Lebesgue null set).
	\end{thm}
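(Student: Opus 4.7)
The plan is to leverage the Poisson integral representation of harmonic functions from Proposition \ref{p:harm int rep} together with the strong Markov property of $Y$ applied to the Green function $\GDP$.

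\emph{Distributional implies probabilistic.} By Proposition \ref{p:harm int rep}, every non-negative $\Lo$-distributionally harmonic $h\in\calL$ equals $\PDS\zeta$ a.e.\ for a finite non-negative measure $\zeta$ on $\partial D$, and after modifying $h$ on a Lebesgue-null set we may assume pointwise equality. By Fubini, it suffices to prove that $x\mapsto\PDS(x,z)$ satisfies the mean-value property for each fixed $z\in\partial D$. The first key observation is that for any $w\in D$, the function $x\mapsto\GDP(x,w)$ is $Y$-harmonic on every $U\subsub D$ with $w\notin\overline U$; this follows from the strong Markov property at $\tau_U$ and the fact that the Green function of $Y$ killed outside $U$ vanishes for the second argument off $\overline U$, yielding
\begin{align*}
\GDP(x,w)=\ex_x[\GDP(Y_{\tau_U},w)], \qquad x\in U,
\end{align*}
after a standard approximation by $\GDP f_n$ with $f_n$ concentrating at $w$. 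For $y_n\in D$ with $y_n\to z\in\partial D$, eventually $y_n\notin\overline U$, so dividing by $V(\de(y_n))$ gives the identity $\GDP(x,y_n)/V(\de(y_n))=\ex_x[\GDP(Y_{\tau_U},y_n)/V(\de(y_n))]$, $x\in U$. The left side converges pointwise to $\PDS(x,z)$, and Fatou's lemma on the right yields $\ex_x[\PDS(Y_{\tau_U},z)]\le\PDS(x,z)$, so $\PDS(\cdot,z)$ is $Y$-superharmonic. To upgrade this to equality I would combine the fact that $\PDS(\cdot,z)=\PDS\delta_z$ is $\Lo$-distributionally harmonic (Proposition \ref{p:harmonic connect}) with a uniqueness argument applied to the non-negative $Y$-superharmonic defect $v(x):=\PDS(x,z)-\ex_x[\PDS(Y_{\tau_U},z)]$, which solves $\Lo v=0$ distributionally in $U$ with vanishing exterior data and regular boundary behaviour at $\partial U$, forcing $v\equiv 0$. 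Fubini then promotes the mean-value identity to $\PDS\zeta$.

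\emph{Probabilistic implies distributional.} Given a non-negative $Y$-probabilistic harmonic $h\in\calL$, the plan is to identify $h$ with a Poisson potential and apply Proposition \ref{p:harmonic connect}. Since $h$ is a non-negative $Y$-harmonic function, the Riesz--Martin integral representation for the transient Hunt process $Y$, combined with the Martin boundary identification in our setting from \cite{ksv_minimal2016}, yields a finite measure $\mu$ on $\partial D$ such that $h(x)=\int_{\partial D}M_Y(x,z)\,\mu(dz)$, with $M_Y(\cdot,z)=\PDS(\cdot,z)/\PDS(x_0,z)$ for a fixed reference point $x_0\in D$. As $\PDS(x_0,\cdot)$ is continuous and positive on the compact set $\partial D$ by Proposition \ref{p:Poisson kernel}, we obtain $h=\PDS\zeta$ for the finite measure $\zeta(dz)=\PDS(x_0,z)^{-1}\mu(dz)$, and Proposition \ref{p:harmonic connect} delivers the $\Lo$-distributional harmonicity of $h$.

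The main obstacle I anticipate is closing the Fatou estimate in the forward direction to obtain equality rather than merely $\le$. A naive dominated-convergence argument fails because the sharp Green-function bound \eqref{eq:GDP sharp} produces a majorant for $\GDP(\cdot,y_n)/V(\de(y_n))$ that blows up near $z$, while the $\tau_U$-exit distribution of $Y$ generally places positive mass in every neighbourhood of $z$ through the jumping kernel $J_D$. Circumventing this blow-up requires either the uniqueness-based potential argument outlined above, drawing on the linear theory developed later in Section \ref{s:linear problem}, or invoking directly the minimality of $\PDS(\cdot,z)$ as a Martin-type kernel associated with the process $Y$.
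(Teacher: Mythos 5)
Your proposal takes a genuinely different route from the paper, and it contains two real gaps that the paper's argument is designed precisely to avoid.

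The paper does not work with the process $Y$ directly. Instead it transports the problem to the already-understood process $X^D$ via the conjugate Green operator: by adapting \cite[Theorem~3.6]{SongVondracek2006potenSpecial} (and supplying the needed continuity inputs from \cite{Bio21jmaa}), a non-negative $h\in\calL$ has the mean-value property with respect to $Y$ if and only if $\GDPs h$ has the mean-value property with respect to $X^D$. The Martin representation of non-negative $X^D$-harmonic functions is then known (from \cite{Bio21jmaa}, \cite{BVW-na21}), giving $\GDPs h = K_D\zeta$, and identity~\eqref{eq:Green Poisson identity} together with Theorem~\ref{t:GDPf dist-sol} converts this into $h=\PDS\zeta$, at which point Proposition~\ref{p:harm int rep} closes the loop. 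No Martin theory for $Y$ itself is required.

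Your ``probabilistic implies distributional'' direction invokes a Riesz--Martin representation for $Y$ with Martin boundary identified with $\partial D$ and $M_Y(\cdot,z)=\PDS(\cdot,z)/\PDS(x_0,z)$. But that identification is essentially equivalent to the theorem being proved, not an off-the-shelf input: the reference you cite does not supply it for the process $Y$ in this generality, and the paper itself only mentions in Remark~\ref{r:martin} that it \emph{could} be obtained by modifying \cite[Theorem~4.3]{SongVondracek2006potenSpecial}, deliberately avoiding any reliance on it in the proof of the theorem. Without an independent argument for the Martin-boundary identification of $Y$, this direction is circular. Your ``distributional implies probabilistic'' direction is also incomplete in the way you yourself flag: Fatou yields only $Y$-superharmonicity of $\PDS(\cdot,z)$, and the proposed uniqueness fix is not a valid closure. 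The defect $v(x)=\PDS(x,z)-\ex_x[\PDS(Y_{\tau_U},z)]$ is a non-negative $Y$-excessive function that need not lie in $\calL$ globally, need not be a distributional solution to $\Lo v=0$ on $U$ in the sense developed here (the paper's distributional theory is global on $D$), and in any case there is no uniqueness statement in the paper that forces $v\equiv 0$ without already knowing something equivalent to the minimality/harmonicity of $\PDS(\cdot,z)$. This is exactly the kind of delicate point the $\GDPs$-transfer mechanism circumvents, so you should either develop that transfer or find another way to convert Fatou's inequality into an equality — e.g., by establishing $\PDS(\cdot,z)$ is $Y$-harmonic as a consequence of its identification as a ratio limit of $\GDP$, which itself requires some version of the $Y$-Martin theory you are not entitled to assume.
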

	
	\begin{proof}
	The proof is based on \cite{SongVondracek2006potenSpecial}, where the underlying process $X$ is an isotropic stable process. In \cite[Theorem 3.6]{SongVondracek2006potenSpecial} authors show  that a non-negative function $h$ has the mean value property with respect to $Y$ if and only if $\GDPs h$ has a mean value property with respect to $X^D$. By carefully examining the proofs in \cite{SongVondracek2006potenSpecial}, we can see that these claims hold even in the more general setting of this paper, where $X$ is a more general subordinate Brownian motion. As stated at the beginning of \cite[Section 3]{SongVondracek2006potenSpecial}, in order to generalize \cite[Theorem 3.6]{SongVondracek2006potenSpecial}, one needs to obtain the following result: if $h$ is a non-negative function with the mean value property with respect to $X^D$, then $h$ and $P^D_t h$ are continuous in $D$. In our setting, the smoothness of $h$ is obtained in \cite[Theorem 3.12]{Bio21jmaa}, see also Remark \ref{r:Bio21jmaa}, while the continuity of $P^D_t h$ follows by the dominated convergence theorem from the continuity of  $p_D$, see Lemma \ref{ap:l:joint p_D}, and the fact that $h\in L^1(D)$, see \cite[Lemma 3.8]{Bio21jmaa}.

		That being said,  we can establish that  a non-negative function $h\in \calL$  has the mean value property with respect to $Y$ if and only if $\GDPs h$ has the mean value property with respect to $X^D$, which by \cite[Proposition 5.11]{Bio21jmaa} and \cite[Eq. (4.24)]{BVW-na21} holds if and only if $\GDPs h=K_D\zeta$ with some finite measure $\zeta$ on $\partial D$.  Thus, by employing Theorem \ref{t:GDPf dist-sol}, a non-negative function $h\in \calL$ has a mean value property with respect to $Y$ if and only if $h=\PDS \zeta$, which by Proposition \ref{p:harm int rep} holds if and only if $h$ is $\Lo$-harmonic.
	\end{proof}
	
	The probabilistic representation of harmonic functions will be of great importance in Section \ref{s:semilinear} dealing with semilinear problems. For example, in Theorem \ref{t:non-positive semilinear problem} we will apply an approximation of harmonic functions by a sequence of Green potentials, a construction that is enabled by probabilistic techniques for harmonic functions.  Lastly, we note a consequence of the existence of the Poisson kernel $\PDS$ on the Martin boundary of the process $Y$.

    \begin{rem}\label{r:martin}
The Poisson kernel $\PDS$ can be viewed as a modified Martin kernel of the process $Y$, where the Martin kernel of $Y$ is given by
	\begin{align*}
		M^{x_0}_D(x,z)=\lim_{ D\ni  y\to z}\frac{\GDP(x,y)}{\GDP(x_0,y)}=\frac{\PDS(x,z)}{\PDS(x_0,z)},\quad x\in D,\,z\in
		\partial D,
	\end{align*}
	for some fixed $x_0\in D$. In general, the limit in this definition is taken in the abstract Martin topology, see \cite{KW_martin}. However, a generally abstract Martin topology and the Martin boundary\footnote{For details we refer to \cite[p. 560 and p. 546]{ksv_RMI18}, see also \cite{KW_martin}.} are in this case the usual Euclidean topology and boundary $\partial D$. The latter can be shown by an appropriate modification of \cite[Theorem 4.3]{SongVondracek2006potenSpecial}. Note that in our setting, the process $X^D$ plays the role of the killed isotropic stable process in \cite{SongVondracek2006potenSpecial}. As in the proof of Theorem \ref{t:1627}, properties of $X^D$ needed to obtain \cite[Theorem 4.3]{SongVondracek2006potenSpecial}  follow from \cite{Bio21jmaa}, see also Remark \ref{r:Bio21jmaa}.
    
    Furthermore, note that the usual way to obtain the Martin kernel is by using the boundary Harnack principle, which in our setting does not generally have to hold, see \cite[Section 9]{ksv2020boundary}. In a similar setting to ours, the existence of $M_D^{x_0}$ in the Euclidean topology was shown in \cite[Section 4]{SongVondracek2006potenSpecial}.
    \end{rem}
    
	\section{Boundary behaviour of potential integrals}\label{s:boundary}
	In this section, we study the boundary behaviour of Poisson and Green potentials relative to $\PDS\sigma$, which will lead us to the description of the nonhomogeneous boundary condition for $\Lo$.
	
	First, we study the behaviour of Poisson potentials, where we prove that the explosion rate of the non-negative $\Lo$-harmonic (i.e. $Y$-harmonic) functions is described by the reference function $\PDS\sigma$. 
	
	\begin{prop}\label{p:bnd-bhv-PDSzeta}
		Let $\zeta\in L^1(\partial D)$. It holds that 
		\begin{align*}
			\frac1t\int_{\{\de(x)\le t\}}
			\frac{\PDS\zeta(x)}{\PDS\sigma(x)}\varphi(x)dx
			\xrightarrow{t\downarrow 0} \int_{\partial D}\varphi(y)\zeta(y)d\sigma(y),\quad \varphi\in C(\overline D).
		\end{align*}
		Moreover, for a continuity point $z\in \partial D$ of $\zeta$, it holds that
		\begin{align}\label{eq:1134b}
			\lim_{D\ni x\to z\in\partial D}\frac{\PDS\zeta(x)}{\PDS\sigma(x)}=\zeta(z),
		\end{align}
		with the convergence in \eqref{eq:1134b} being uniform on $\partial D$ if $\zeta\in C(\partial D)$.
	\end{prop}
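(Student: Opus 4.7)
The plan is to establish the pointwise statement \eqref{eq:1134b} first, then deduce the integrated limit via a uniform averaging bound and a density argument.

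For a continuity point $z_0\in\partial D$ of $\zeta\in L^1(\partial D)$, I would write
\begin{align*}
\frac{\PDS\zeta(x)}{\PDS\sigma(x)}-\zeta(z_0)=\frac{1}{\PDS\sigma(x)}\int_{\partial D}\PDS(x,z)\bigl(\zeta(z)-\zeta(z_0)\bigr)\,d\sigma(z),
\end{align*}
and split the integral over $\{|z-z_0|<\delta\}$ and $\{|z-z_0|\ge\delta\}$. On the near piece, continuity at $z_0$ gives $|\zeta(z)-\zeta(z_0)|<\varepsilon$, producing a contribution at most $\varepsilon$. On the far piece, for $|x-z_0|<\delta/2$ we have $|x-z|\ge\delta/2$, so the sharp bound \eqref{eq:Poisson kernel sharp} yields $\PDS(x,z)\le C_\delta V(\de(x))$; combined with \eqref{eq:Poisson potential sharp}, the monotonicity of $r\mapsto r^2\psi(r^{-2})$, and the weak scaling \eqref{e:wsc-V}, this gives $V(\de(x))/\PDS\sigma(x)\asymp \de(x)V(\de(x))^2\psi(V(\de(x))^{-2})\to 0$ as $x\to z_0$. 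Letting $\varepsilon\downarrow 0$ after $x\to z_0$ proves \eqref{eq:1134b}. When $\zeta\in C(\partial D)$, $\delta$ can be chosen independently of $z_0$ by uniform continuity; taking $z_0$ to be the nearest-point projection of $x$ to $\partial D$ then upgrades the convergence to uniform.

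For the integrated statement, the key ingredient I would establish is the uniform averaging bound
\begin{align*}
\sup_{t>0,\,z\in\partial D}\frac{1}{t}\int_{\{\de(x)\le t\}}\frac{\PDS(x,z)}{\PDS\sigma(x)}\,dx\le C.
\end{align*}
Combining \eqref{eq:Poisson kernel sharp} and \eqref{eq:Poisson potential sharp}, using that $r\mapsto V(r)^2\psi(V(r)^{-2})$ is non-decreasing together with $\de(x)\le|x-z|$ for $z\in\partial D$, one extracts the clean pointwise bound $\PDS(x,z)/\PDS\sigma(x)\lesssim \de(x)/|x-z|^d$. In the tubular neighborhood of $\partial D$ (available because $D$ is $C^{1,1}$), the parametrization $x=y+s\nu(y)$ with $y\in\partial D$ gives $\de(x)=s$, $|x-z|\asymp|y-z|+s$ and $dx\asymp d\sigma(y)\,ds$. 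A direct rescaling $u=(y-z)/s$ in local coordinates on $\partial D$ yields $\int_{\partial D} s(|y-z|+s)^{-d}\,d\sigma(y)\lesssim 1$ uniformly in $(s,z)$, from which the averaging bound follows. By Fubini this translates into the uniform estimate
\begin{align*}
\Bigl|\frac{1}{t}\int_{\{\de\le t\}}\frac{\PDS\zeta(x)\varphi(x)}{\PDS\sigma(x)}\,dx\Bigr|\le C\|\varphi\|_\infty\|\zeta\|_{L^1(\partial D)},\quad t>0.
\end{align*}

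For $\zeta\in C(\partial D)$, the uniform convergence from part~(2) means that $\PDS\zeta\,\varphi/\PDS\sigma$ extends continuously to $\overline D$ with boundary trace $\zeta\varphi$; parametrizing $\{\de\le t\}$ in the tubular neighborhood and applying dominated convergence in the normal variable delivers the limit $\int_{\partial D}\zeta\varphi\,d\sigma$. The case $\zeta\in L^1(\partial D)$ then follows by approximating by $\zeta_n\in C(\partial D)$ in $L^1$ and controlling the error via the uniform bound above. The main technical obstacle is the uniform averaging estimate: carefully extracting the tractable dominating function $\de(x)/|x-z|^d$ from the intertwined Bernstein-function bounds, and carrying out the scaling computation in the $C^{1,1}$ tubular neighborhood. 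Once this is in place, both claims follow by standard density and continuity arguments.
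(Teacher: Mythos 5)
Your pointwise argument for \eqref{eq:1134b} is correct and closely mirrors the paper's: both rest on the clean bound $\PDS(x,z)/\PDS\sigma(x)\lesssim \de(x)/|x-z|^{d}$ (the paper's \eqref{eq:1107}), obtained by combining \eqref{eq:Poisson kernel sharp} with \eqref{eq:Poisson potential sharp} and the monotonicity of $r\mapsto r^2\psi(r^{-2})$. You handle the near piece more slickly than the paper does: you observe directly that $\frac{1}{\PDS\sigma(x)}\int_{|z-z_0|<\delta}\PDS(x,z)\,d\sigma(z)\le 1$, whereas the paper applies the bound $\de(x)/|x-z|^d$ on the near piece too and then invokes the auxiliary estimate $\int_{\Gamma_z}|x-y|^{-d}\,\sigma(dy)\asymp\de(x)^{-1}$. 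Both routes are valid, and your far-piece bound $V(\de(x))/\PDS\sigma(x)\asymp \de(x)V(\de(x))^{2}\psi(V(\de(x))^{-2})\to 0$ is correct. For the integrated limit, the paper simply defers to \cite[Theorem 26]{AbaDup-Nonhomog2017} and \cite[Proposition 3.5]{Bio23cpaa}, so your explicit argument (uniform averaging bound $\sup_{t,z}\frac{1}{t}\int_{\{\de\le t\}}\frac{\PDS(x,z)}{\PDS\sigma(x)}\,dx\lesssim 1$ via the tubular-neighborhood rescaling, followed by density of $C(\partial D)$ in $L^1(\partial D)$) supplies a detail the paper leaves implicit; the rescaling computation and the $d\ge 2$ hypothesis make the inner integral $\int_{\partial D}\frac{s\,d\sigma(y)}{(|y-z|+s)^{d}}$ bounded uniformly, which is the crux. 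The only minor caveat is that the assertion $V(\de(x))/\PDS\sigma(x)\to 0$ does require knowing $r^2\psi(r^{-2})\to 0$ as $r\to 0$, which follows from $\od{\psi}<1$ in \ref{as:WSC}; it would be worth flagging this explicitly since the paper's far-piece estimate only needs the elementary fact $\de(x)\to 0$.
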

	\begin{proof}
		Let us first prove the second claim, which follows the line of \cite[Proposition 3.4]{Bio23cpaa}.  By \eqref{eq:Poisson kernel sharp} and \eqref{eq:Poisson potential sharp}, for all $x\in D$ and $y\in \partial D$ we have
		\begin{align}\label{eq:1107}
			\frac{\PDS(x,y)}{\PDS\sigma(x)}\lesssim
			\frac{\de(x)}{|x-y|^{d}}
			\frac{V(\de(x))^2 \psi(V(\de(x))^{-2}}{V(|x- y |)^2 \psi(V(|x-y |)^{-2}}\le
			\frac{\de(x)}{|x-y|^{d}}.
		\end{align}
		Take now a continuity point $z\in \partial D$ of $\zeta$, $\varepsilon>0$ and choose $\eta>0$ such that if $|y-z|<\eta$, then $|\zeta(y)-\zeta(z)|\le \varepsilon$. Define $\Gamma_z=\{y\in \partial D:|y-z|<\eta\}$, so for $|x-z|<\eta/2$, by using \eqref{eq:1107} and \cite[Lemma A.2]{Bio23cpaa}, we get
		\begin{align*}
			\left|\frac{\PDS\zeta(x)}{\PDS\sigma(x)}-\zeta(z)\right|&\lesssim \de(x)\int_{\Gamma_z}\frac{|\zeta(y)-\zeta(z)|}{|y- x|^d}\sigma(dy)+\de(x)\int_{\partial D\setminus \Gamma_z}\frac{|\zeta(y)-\zeta(z)|}{|y- x|^d}\sigma(dy)\\
			&\lesssim \varepsilon + \de(x)\left(\frac{\eta}{2}\right)^{-d}\|\zeta\|_{L^1(\partial D)},
		\end{align*}
		where the constant of comparability does not depend on $z$. Hence, we obtain \eqref{eq:1134b}. The uniformness, if $\zeta$ is continuous, follows by the uniform continuity of $\zeta$ since $\partial D$ is compact.
		
		The proof of the first claim follows exactly as the proof of \cite[Theorem 26]{AbaDup-Nonhomog2017} for the spectral fractional Laplacian, see also \cite[Proposition 3.5]{Bio23cpaa} for additional details.
	\end{proof}

	In the next proposition we deal with boundary behaviour of Green potentials with respect to the $\PDS\sigma$. As expected, $\PDS\sigma$ annihilates Green potentials but only in the weak sense. For a pointwise annihilation, we must assume a special form of functions.
	\begin{prop}\label{p:boundary operator GD lambda}
	    Let $\lambda$ be a measure on $D$ such that $\int_D V(\de(x))|\lambda|(dx)<\infty$. Then it holds
        \begin{align*}
            \lim_{t\downarrow 0}\frac{1}{t}\int_{\{\de(x)<t\}}\frac{\GDP \lambda(x)}{\PDS\sigma(x)}\varphi(x)dx=0,\quad \varphi\in C(\overline D).
        \end{align*}
        If $\lambda$ has a density, i.e. $\lambda(dx)=\lambda(x)dx$, such that $\lambda(x)\lesssim U(\de(x))$, where $U$ satisfies \ref{U1}--\ref{U4}, then
        \begin{align}\label{e:1040}
        	\lim_{D\ni x\to z\in\partial D}\frac{\GDP
        		\lambda(x)}{\PDS\sigma(x)}=0.
        \end{align}
	\end{prop}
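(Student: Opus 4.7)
The two claims call for different strategies. The first is a weak $L^1$--type statement about the boundary trace of $\GDP\lambda/\PDS\sigma$ and lends itself to a Fubini plus dominated convergence approach; the second is a pointwise vanishing statement that I would attack via the sharp Green potential estimate of Theorem~\ref{t:bndry-G(U)}.

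For the first claim, Fubini's theorem gives
\[
\frac{1}{t}\int_{\{\de<t\}}\frac{\GDP\lambda(x)}{\PDS\sigma(x)}\varphi(x)\,dx=\int_D F_t(y)\,\lambda(dy),\qquad F_t(y)\coloneqq\frac{1}{t}\int_{\{\de<t\}}\frac{\GDP(x,y)\varphi(x)}{\PDS\sigma(x)}\,dx.
\]
I would then derive the key estimate
\[
\frac{\GDP(x,y)}{\PDS\sigma(x)}\lesssim \frac{\de(x)V(\de(y))}{|x-y|^d},\qquad \de(x)\le|x-y|,
\]
by combining \eqref{eq:GDP sharp} and \eqref{eq:Poisson potential sharp} with the monotonicity of $r\mapsto r^2\psi(r^{-2})$, in the same spirit as the computation leading to \eqref{eq:1107} in Proposition~\ref{p:bnd-bhv-PDSzeta}. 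Fixing $y\in D$ and taking $t<\de(y)/2$ forces $|x-y|\gtrsim\de(y)$ on $\{\de<t\}$, so $|F_t(y)|\lesssim V(\de(y))\,t/\de(y)^d\to 0$ pointwise. To close the argument by dominated convergence I would split $\lambda=\lambda\1_{\{\de\ge\varepsilon\}}+\lambda\1_{\{\de<\varepsilon\}}$: the interior piece is handled by DCT using the uniform-on-compacts bound for $F_t$, and the boundary piece by a uniform-in-$t$ bound $|F_t(y)|\lesssim V(\de(y))$ together with the hypothesis $\int V(\de)\,d|\lambda|<\infty$ (sending $\varepsilon\downarrow 0$). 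The uniform near-boundary bound, which I regard as the main technical obstacle, would be extracted from $1/\PDS\sigma(x)\lesssim tV(t)\psi(V(t)^{-2})$ on $\{\de<t\}$, from the estimate of $\GDP\1(y)$ supplied by Theorem~\ref{t:bndry-G(U)} with $U\equiv 1$, and from a case analysis separating the regimes $\de(y)\ge 2t$ and $\de(y)<2t$, exploiting the monotonicity of $r\mapsto rV(r)\psi(V(r)^{-2})$.

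For the second claim, the stronger hypothesis $\lambda(x)\lesssim U(\de(x))$ with \ref{U1}--\ref{U4} places us directly in the scope of Theorem~\ref{t:bndry-G(U)}. Monotonicity of the Green operator yields $|\GDP\lambda(x)|\le C\,\GDP(U(\de))(x)$, and dividing by the sharp asymptotic of $\PDS\sigma(x)$ from Proposition~\ref{p:poisson bndry and finiteness} produces
\begin{align*}
\frac{|\GDP\lambda(x)|}{\PDS\sigma(x)}
\lesssim \int_0^{\de(x)}\!\!U(t)V(t)\,dt+\de(x)V(\de(x))^2\psi(V(\de(x))^{-2})\Bigl(1+\int_{\de(x)}^{\diam D}\!\!\frac{U(t)\,dt}{tV(t)\psi(V(t)^{-2})}\Bigr).
\end{align*}
The first summand vanishes as $\de(x)\downarrow 0$ by \ref{U1}, and the factor $\de V(\de)^2\psi(V(\de)^{-2})$ vanishes by \ref{as:WSC} and \eqref{e:wsc-V}, as already used in the proof of Theorem~\ref{t:conti-GDP-L1}. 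The mixed product is the heart of the matter: I would dispatch it by integration by parts against the primitive $A(r)\coloneqq\int_0^r U(t)V(t)\,dt$, which is finite and $o(1)$ as $r\downarrow 0$ by \ref{U1}, combined with the weak scaling for $V(t)^2\psi(V(t)^{-2})$ from \ref{as:WSC}, so that the boundary terms of the integration by parts are either absorbed into the first summand or killed by the already-established decay of $\de V(\de)^2\psi(V(\de)^{-2})$.
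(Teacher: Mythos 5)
Your overall strategy mirrors the paper's: Fubini and a pointwise-plus-dominated-convergence argument for the weak claim, and Theorem~\ref{t:bndry-G(U)} divided by the asymptotic of $\PDS\sigma$ for the pointwise one. But two steps as sketched would not close.

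For the first claim, your key estimate $\GDP(x,y)/\PDS\sigma(x)\lesssim\de(x)V(\de(y))/|x-y|^d$ for $\de(x)\le|x-y|$ is correct, and yields pointwise decay of $F_t(y)$ for fixed $y$. The problem is the uniform-in-$t$ domination $|F_t(y)|\lesssim V(\de(y))$: the crude bound $V(\de(y))\,t/\de(y)^d$ you extract is not $\lesssim V(\de(y))$ uniformly in $t$ when $\de(y)\gtrsim t$ and $d\ge 2$, since $t/\de(y)^d$ is unbounded as $\de(y)\to 0$ along $\de(y)\asymp t$. The paper replaces the crude volume bound by a surface-measure integration over $\partial D$ (using $\int_{\partial D}|z-y|^{-d}\,d\sigma(z)\asymp\de(y)^{-1}$), which sharpens $\de(y)^{-d}$ to $\de(y)^{-1}$ and does give $\lesssim V(\de(y))$. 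Moreover, your proposal to control the near-boundary regime via ``$\GDP\1(y)$ from Theorem~\ref{t:bndry-G(U)} with $U\equiv 1$'' discards precisely what makes the estimate tight: you need the boundary-strip potential $\GDP(\1_{\{\de<t\}})(y)$, not the full potential $\GDP\1(y)$. The paper instead invokes Lemma~\ref{apx:l:GDFI close-to-bdry} with weight $U(r)=rV(r)\psi(V(r)^{-2})\asymp 1/\PDS\sigma$ and $\eta=t$ for $\de(y)<t/2$, giving the needed $f(t,y)\lesssim t\,V(\de(y))$ directly.

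For the second claim, your reduction to the three terms and the treatment of the first two match the paper. For the ``mixed product'' term, the integration-by-parts route you sketch is not closed: after IBP you are left with $\int_\de^{\diam D}A(t)\,w'(t)\,dt$ with $w(t)=(tV(t)^2\psi(V(t)^{-2}))^{-1}$, and controlling this requires a further splitting of the domain (e.g.\ at $\sqrt{\de}$) or another device you do not supply. The paper's argument is much shorter and more elementary: since $r\mapsto rV(r)\psi(V(r)^{-2})$ is non-decreasing, for $t\ge\de(x)$ one has $\de(x)V(\de(x))\psi(V(\de(x))^{-2})/(tV(t)\psi(V(t)^{-2}))\le 1$, so the integrand is bounded by $U(t)V(\de(x))\le U(t)V(t)$, and dominated convergence with the majorant $U(t)V(t)\in L^1(0,\diam D)$ from \ref{U1} and \ref{U4} finishes. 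I would adopt that argument rather than the IBP detour.
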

    \begin{proof}
        For the first claim, by Fubini's theorem we get that
        \begin{align*}
        	\int_{\{\de(x)<t\}}\frac{\GDP \lambda(x)}{\PDS\sigma(x)}\varphi(x)dx=\int_D\int_{\{\de(x)<t\}}\frac{\GDP(x,y)}{\PDS\sigma(x)}\varphi(x)dx\,\lambda(dy).
        \end{align*}

    For the inner integral we will show that
        \begin{align}\label{eq:1442}
            \int_{\{\de(x)<t\}}\frac{\GDP(x,y)}{\PDS\sigma(x)}dx\le f(t,y),
        \end{align}
        where $f(t,y)\lesssim t V(\de(y))$ and $f(t,y)/t\to 0$ as $t\to 0$ for every fixed $y\in D$.
        
        To this end, for $\de(y)<\frac{t}{2}$, we first note that $U(t)=t\,V(t)\psi(V(t)^{-2})$ satisfies \ref{U1}, \eqref{eq: U3'}, and \ref{U4}, and that $U(\de)\asymp 1/\PDS\sigma$. By repeating the steps in Lemma \ref{apx:l:GDFI close-to-bdry} for $\eta=t$, we get that for $\de(y)<\frac{t}{2}$
        \[
        \int_{\{\de(x)<t\}}\frac{\GDP(x,y)}{\PDS\sigma(x)}dx\le Ct V(\de(y))=:f(t,y).        
        \]

        For $\de(y)\ge \frac{t}2$, we repeat the steps from the proof of \cite[Lemma A.5]{Bio23cpaa}. We split the integral in \eqref{eq:1442} into four terms, defined below,
        \begin{align*}
        	\int_{\{\de(x)<t\}}\frac{\GDP(x,y)}{\PDS\sigma(x)}dx=J_1+J_2+J_3+J_4.
        \end{align*}
        By \eqref{eq:GDP sharp} and \eqref{eq:Poisson potential sharp} we have that
        \begin{align*}
        	J_1&\coloneqq \int_{\{\de(x)<t/4\}}\frac{\GDP(x,y)}{\PDS\sigma(x)}dx\lesssim V(\de(y))\int_0^{t/4}\frac{h}{\de(y)}dh
        	\le  C_1\frac{V(\de(y))}{\de(y)}\,t^2\\
        \end{align*}
        and
        \begin{align*}
         	J_2&\coloneqq \int_{\{t/4<\de(x)<t\}\cap B(y,t/4)}\frac{\GDP(x,y)}{\PDS\sigma(x)}dx\lesssim \int_{B(y,t/4)}\frac{t V(t) \psi(V(t)^{-2}) }{|x-y|^d \psi(V(|x-y|)^{-2})}dx
        	\le C_2 t V(t).
        \end{align*}
        Further, define $D_3\coloneqq \{t/4<\de(x)<t\}\cap B(y,t/4)^c\cap B(y,r_0)$, for some fixed and small enough $r_0>0$ dependent on the characteristics of the set $D$, see Appendix \ref{ap:s:G(U) approx}. We have
		\begin{align*}
			J_3 &\coloneqq \int_{D_3}\frac{\GDP(x,y)}{\PDS\sigma(x)}dx\asymp \int_{D_3} \frac{ V(\de(x))V(\de(y))t V(t)^2 \psi(V(t)^{-2})}{V(|x-y|)^2|x-y|^d\psi(V(|x-y|)^{-2})}dx\\
			&\le C_3  t\,V(\de(y))\int_{D_3}|x-y|^{-d}dx.
		\end{align*}
		Finally, define $D_4\coloneqq \{t/4<\de(x)<t\}\cap B(x,r_0)^c$ so we have $J_4\coloneqq \int_{D_4}\frac{\GDP(x,y)}{\PDS\sigma(x)}dx\le C_4 V(\de(y))t\,V(t)^2\psi(V(t)^{-2}).$
        
       For $\de(y)\ge \frac{t}2$ set
       \begin{align*}
       f(t,y)&:=C_1\frac{V(\de(y))}{\de(y)}\,t^2+C_2 t V(t)+C_3  t\,V(\de(y))\int_{D_3}|x-y|^{-d}dx\\
       &\hspace{5em}+C_4 V(\de(y))t\,V(t)^2\psi(V(t)^{-2})
       \end{align*}
       and note that by \cite[Eq. (A.30)]{Bio23cpaa}, $\int_{D_3}|x-y|^{-d}dx\lesssim \int_{t/4}^{r_0}t\, r^{-2}dr\lesssim 1$ so by the dominated convergence theorem we have $\lim\limits_{t\to 0}\int_{D_3}|x-y|^{-d}dx=0$. Furthermore, since $\lim\limits_{r\to 0}r\psi(r^{-1})=0$ we have that $f(t,y)\lesssim tV(\de(y))$ and
       $\lim_{t\to 0}\frac{f(t,y)}{t}=0$ for all $y\in D$.
                
        For the second claim, assume that $\lambda(x)\lesssim U(\de(x))$, where $U$ satisfies \ref{U1}--\ref{U4}. It is enough to show that 
        \begin{align*}
        	\lim_{D\ni x\to z\in\partial D}\frac{\GDP
        		\big(U(\de)\big)(x)}{\PDS\sigma(x)}=0.
        \end{align*}
        We use the sharp bounds for $\GDP U(\de)$ obtained in \eqref{eq:G(U) sharp bound} and note that $\PDS\sigma$ annihilates the first and the second term of  \eqref{eq:G(U) sharp bound}. In the last term, on $\{t\ge \de(x)\}$ we have $t V(t)\psi(V(t)^{-2})\gtrsim \de(x)V(\de(x))\psi(V(\de(x))^{-2})$ and  $U(t)V(\de(x))\le U(t)V(t)$. The dominated convergence theorem now implies that as $\de(x)\to 0$
        \begin{align*}
        	\frac{V(\de(x))\int_{\de(x)}^{\diam D}\frac{U(t)}{t V(t)\psi(V(t)^{-2})}\, dt\,}{\PDS\sigma(x)}\lesssim \int_{V(\de(x))}^{\diam D}U(t)V(\de(x))dt\to0,
        \end{align*}
        which proves the claim.
    \end{proof}

\section{Linear Dirichlet problem for $\Lo$}\label{s:linear problem}
	 We begin this section with the definition of a weak-dual solution, a notion first introduced as a very weak solution by Brezis, and later developed and used in many state-of-the-art papers, see e.g. \cite{Aba15a,Aba17,AGCV19,BFV-cvPDE18,CGcV-jfa21,BVW-na21,Bio23cpaa}. The main advantage of this approach is that it eliminates the need to define the operator on a class of test functions. We will later see that essentially the weak-dual solution to \eqref{linear problem} is also a distributional solution.
	
	\begin{defn}\label{d:problem definition}
		Let $\lambda$ be a (signed) measure on $D$ and $\zeta$ a (signed) measure on $\partial D$ such that  
		\begin{align}\label{eq:condition on rate of solution}
			\int_DV(\de(x))|\lambda|(dx)+|\zeta|(\partial D)<\infty.\tag{$I_V$}
		\end{align}
        A function $u\in L^1_{loc}(D)$ is a weak-dual solution to the problem
		\begin{align}\label{linear problem}
			\begin{cases}
				\Lo u=\lambda,&\textrm{in $D$},\\
				\frac{u}{\PDS\sigma}=\zeta,&\textrm{on $\partial D$},
			\end{cases}
		\end{align}
		if for every $\xi\in C_c^\infty(D)$ it holds that
		\begin{align}\label{eq:problem - integral definition}
			\int_D u(x)\xi(x)dx=\int_D\GDP\xi(x)\lambda(dx)-\int_{\partial D}\frac{\partial}{\partial V}\GDP\xi(z)\zeta(dz).
		\end{align}
		If we have $\le$ in \eqref{eq:problem - integral definition} instead of the equality and the inequality holds for every non-negative $\xi\in C_c^\infty(D)$, then we say $u$ is a weak-dual subsolution \eqref{linear problem}. If the same holds for $\ge$ instead of $\le$, we call the function $u$ a weak-dual supersolution.	\end{defn}
	
	\begin{rem}
		\begin{enumerate}[label=(\alph*)]
			\item
			Let $\xi\in C_c^\infty(D)$. Note that, by calculations as in Proposition \ref{p:Poisson kernel}, the function $\frac{\partial}{\partial V}\GDP\xi(z)$ is well defined and
            $$-\frac{\partial}{\partial V}\GDP\xi(z)=\int_D\PDS(y,z)\xi(y)dy, \quad z \in \partial D.$$
			Further, \eqref{eq:Poisson kernel sharp} implies $\frac{\partial}{\partial V}\GDP\xi\in L^\infty(\partial D)$, and by Lemma \ref{l:Green invariant} we have $|\GDP\xi(x)|\lesssim V(\de(x))$, so the condition \eqref{eq:condition on rate of solution} ensures that the integrals in \eqref{eq:problem - integral definition} are finite.
			\item
			If $u$ is a solution to the linear problem \eqref{linear problem}, then $u=\GDP\lambda+\PDS\zeta$, a.e.~in $D$. Furthermore, $u\in \calL$ since both terms are in $\calL$. Conversely, by using Fubini's theorem, it is easily shown that the function $u=\GDP\lambda+\PDS\zeta$ 
            is a solution to \eqref{linear problem}.
            \item The relation \eqref{eq:problem - integral definition} can be also rewritten as follows:
            \begin{align*}
			\int_D u(x)\Lo\varphi(x)dx=\int_D\varphi(x)\lambda(dx)-\int_{\partial D}\frac{\partial}{\partial V}\varphi(z)\zeta(dz),
		\end{align*}
        for all $\varphi\in\{\eta\in C^V(D): \exists\xi \in C_c^\infty(D)\textrm{ such that }\Lo \eta=\xi \textrm{ in $D$, } \eta_{|\partial D}\equiv 0\}.$
		\end{enumerate}
	\end{rem}

 	This remark, together with Lemma \ref{l:Green invariant} and Lemma \ref{l:integrability of Poisson potentials} yields the following corollary.
	\begin{cor}\label{t:linear problem}
		Let $\lambda\in \MM(D)$ and $\zeta\in \MM(\partial D)$ such that  
		\eqref{eq:condition on rate of solution} holds. Then the linear problem \eqref{linear problem} has a unique weak-dual solution $u$. Furthermore, $u\in\LLL$,
		\begin{align*}
			u(x)=\GDP\lambda(x)+\PDS\zeta(x),\quad\text{for a.e. $x\in D$,}
		\end{align*}
		and there is $C=C(d,D,\psi,\phi)>0$ such that
		\begin{align*}
			\|u\|_{\LLL}\le C\left(\int_DV(\de(x))|\lambda|(dx)+|\zeta|(\partial D)\right).
		\end{align*}
	\end{cor}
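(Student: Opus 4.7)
The statement is essentially a consequence of the preceding Remark together with Lemmas~\ref{l:Green invariant} and \ref{l:integrability of Poisson potentials}, so the proof reduces to tying these pieces together cleanly.

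\textbf{Existence.} I would define $u(x) := \GDP\lambda(x) + \PDS\zeta(x)$ for a.e.\ $x\in D$ and verify directly that $u$ satisfies \eqref{eq:problem - integral definition}. Fix $\xi\in C_c^\infty(D)$. By the sharp bound $|\GDP\xi(x)|\lesssim V(\de(x))$ (from Lemma~\ref{l:Green invariant} applied to the bounded, compactly supported $\xi$) and the computation in Proposition~\ref{p:Poisson kernel} giving $-\partial_V\GDP\xi(z)=\int_D \PDS(y,z)\xi(y)\,dy$, each of the integrals appearing in \eqref{eq:problem - integral definition} is absolutely convergent under \eqref{eq:condition on rate of solution}. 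Fubini's theorem together with the symmetry of $\GDP$ then gives
\begin{align*}
\int_D \GDP\lambda(x)\xi(x)\,dx &= \int_D \GDP\xi(y)\,\lambda(dy),\\
\int_D \PDS\zeta(x)\xi(x)\,dx &= -\int_{\partial D}\partial_V\GDP\xi(z)\,\zeta(dz),
\end{align*}
so summing yields \eqref{eq:problem - integral definition}.

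\textbf{Uniqueness.} If $u_1,u_2$ are two weak-dual solutions, then $w:=u_1-u_2\in L^1_{loc}(D)$ satisfies $\int_D w(x)\xi(x)\,dx = 0$ for every $\xi\in C_c^\infty(D)$ (since the right-hand side of \eqref{eq:problem - integral definition} depends only on $\lambda,\zeta$). A standard density argument forces $w=0$ a.e.\ on $D$, so $u=\GDP\lambda+\PDS\zeta$ a.e.

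\textbf{Integrability and the norm bound.} It remains to show $u\in\LLL$ with the claimed estimate. For the Poisson part, Lemma~\ref{l:integrability of Poisson potentials} directly supplies $\|\PDS\zeta\|_{\LLL}\le C\,|\zeta|(\partial D)$. For the Green part, Lemma~\ref{l:Green invariant} gives $\GDP V(\de)\asymp V(\de)$; by Tonelli's theorem and symmetry of $\GDP$,
\begin{align*}
\|\GDP\lambda\|_{\LLL} &\le \int_D\!\int_D \GDP(x,y)\,V(\de(x))\,dx\,|\lambda|(dy) = \int_D \GDP\bigl(V(\de)\bigr)(y)\,|\lambda|(dy)\\
&\lesssim \int_D V(\de(y))\,|\lambda|(dy),
\end{align*}
with constants depending only on $d,D,\phi,\psi$. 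Adding the two estimates yields the stated bound on $\|u\|_{\LLL}$.

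\textbf{Potential obstacle.} No genuine obstacle arises since all the heavy lifting (sharp two-sided Green/Poisson estimates, Green-kernel $V(\de)$-invariance, integrability of $\PDS\zeta$ in weighted $L^1$) has been carried out in the preceding sections. The one delicate point worth flagging is the extension of Lemma~\ref{l:Green invariant} from densities to general signed measures $\lambda$, but this is immediate via the Jordan decomposition $\lambda=\lambda^+-\lambda^-$ and Tonelli's theorem applied separately to $\lambda^\pm$, as above.
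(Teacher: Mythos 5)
Your proposal is correct and follows essentially the same route as the paper, which simply cites the Remark preceding the Corollary (showing $u=\GDP\lambda+\PDS\zeta$ solves and is forced by \eqref{eq:problem - integral definition}) together with Lemma~\ref{l:Green invariant} and Lemma~\ref{l:integrability of Poisson potentials} for the $\LLL$ bounds. Your write-up simply makes explicit the Fubini/Tonelli manipulations and the extension to signed measures via Jordan decomposition that the paper leaves implicit.
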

	
An elementary observation gives us the following weak maximum principle.
	\begin{cor}[Weak maximum principle]\label{c:maximum principle weak solution}
	If $\lambda\ge0$ and $\zeta\ge0$ and they satisfy \eqref{eq:condition on rate of solution}, then the unique solution $u$ to the linear problem $\eqref{linear problem}$ satisfies $u\ge0$ a.e. in $D$.
	\end{cor}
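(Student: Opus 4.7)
The plan is to reduce the statement immediately to the representation formula provided by Corollary \ref{t:linear problem}. Under the hypothesis \eqref{eq:condition on rate of solution}, that corollary gives the existence and uniqueness of the weak-dual solution $u$ together with the explicit formula
\[
u(x) = \GDP\lambda(x) + \PDS\zeta(x), \qquad \text{for a.e. } x \in D.
\]
So the entire claim reduces to verifying that each of the two terms on the right-hand side is non-negative whenever $\lambda\ge 0$ and $\zeta\ge 0$.

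For the Green potential, we note that $\GDP(x,y)$ is given by \eqref{eq:GDP_representation} as the time integral of the non-negative transition density $p_D(t,x,y)$ against the potential density $\uu(t)\ge 0$, so $\GDP(x,y)\ge 0$ for all $x,y\in D$. Consequently $\GDP\lambda(x)=\int_D \GDP(x,y)\lambda(dy)\ge 0$ whenever $\lambda\ge 0$. For the Poisson potential, the kernel $\PDS(x,z)$ is defined as the limit $-\partial_V\GDP(x,z)=\lim_{D\ni y\to z}\GDP(x,y)/V(\de(y))$, which is a limit of non-negative quantities and is therefore non-negative; Proposition \ref{p:Poisson kernel} actually gives the sharp two-sided bound \eqref{eq:Poisson kernel sharp} confirming $\PDS(x,z)>0$ on $D\times\partial D$. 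Hence $\PDS\zeta(x)=\int_{\partial D}\PDS(x,z)\zeta(dz)\ge 0$ whenever $\zeta\ge 0$.

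Adding the two non-negative terms yields $u(x)\ge 0$ for a.e.\ $x\in D$, which is exactly the weak maximum principle. There is no real obstacle here: the entire content of the statement has already been absorbed into the existence/representation Corollary \ref{t:linear problem} and into the positivity of the kernels $\GDP$ and $\PDS$, which in turn stems from their probabilistic origin (time-integrals of transition densities and their boundary limits).
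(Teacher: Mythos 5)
Your proof is correct and is exactly the ``elementary observation'' the paper alludes to: the representation $u=\GDP\lambda+\PDS\zeta$ from Corollary \ref{t:linear problem} together with the non-negativity of the kernels $\GDP$ and $\PDS$. Nothing further is required.
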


	\begin{defn}\label{r:defn of distributional solution}
        A function $u\in \calL$ is a distributional solution to \eqref{linear problem} if for every $\xi\in C^\infty_c(D)$ it holds that
		\begin{align*}
			\int_D u(x)\Lo\xi(x)dx=\int_D \xi(x)\lambda(dx),
		\end{align*}
		and if for every $\varphi \in C(\overline D)$ it holds that
		\begin{align}\label{eq:distri solution boundary}
			\lim_{t\downarrow 0}\frac1t \int_{\{\de(x)\le t\}}\frac{u(x)}{\PDS\sigma(x)}\varphi(x)dx=\int_{\partial D}\varphi(z)\zeta(dz).
		\end{align}
	\end{defn}
	
	\begin{prop}\label{p:distSol}
		Let $\lambda\in \MM(D)$ and $\zeta\in L^1(\partial D)$ such that \eqref{eq:condition on rate of solution} holds. Then the weak-dual solution to \eqref{linear problem} is also a distributional solution to \eqref{linear problem}.
	\end{prop}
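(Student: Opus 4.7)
The plan is to use the explicit representation of the weak-dual solution, namely $u=\GDP\lambda+\PDS\zeta$ a.e. in $D$ (Corollary \ref{t:linear problem}), and verify the two defining properties of the distributional solution term by term, invoking results already established in the paper.

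First I would verify the interior identity $\int_D u\,\Lo\xi\,dx=\int_D\xi\,d\lambda$ for $\xi\in C_c^\infty(D)$. Splitting $u$ into its Green and Poisson parts, I handle the Green part via Fubini's theorem:
\begin{align*}
\int_D \GDP\lambda(x)\,\Lo\xi(x)\,dx=\int_D\!\!\int_D \GDP(x,y)\,\Lo\xi(x)\,dx\,d\lambda(y)=\int_D \GDP(\Lo\xi)(y)\,d\lambda(y)=\int_D\xi(y)\,d\lambda(y),
\end{align*}
where the last equality uses Proposition \ref{p:Lo inverse L2(D)} (note that $\Lo\xi\in L^2(D)$ by Remark \ref{r:infty-gener}, so $\GDP\Lo\xi=\xi$ a.e.). The Fubini step requires verification: using \eqref{eq:dist-weight-rho} we have $|\Lo\xi|\lesssim \rho(\de)$, and then Theorem \ref{t:conti-GDP-L1} gives $\GDP(\rho(\de))\asymp V(\de)$, so the iterated integral of absolute values is bounded by $\int_D V(\de)\,d|\lambda|<\infty$ from \eqref{eq:condition on rate of solution}. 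The Poisson part contributes nothing, since Proposition \ref{p:harmonic connect} asserts that $\PDS\zeta$ is $\Lo$-harmonic in the distributional sense, i.e. $\int_D \PDS\zeta\cdot\Lo\xi\,dx=0$ (after decomposing $\zeta=\zeta^+-\zeta^-$ and observing that both $\PDS\zeta^\pm\in\calL$ and $|\Lo\xi|\lesssim\rho(\de)$ make the integrals well-defined).

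Next I would verify the boundary condition \eqref{eq:distri solution boundary} by the same splitting. For the Poisson part, Proposition \ref{p:bnd-bhv-PDSzeta} applied to $\zeta\in L^1(\partial D)$ gives directly
\begin{align*}
\lim_{t\downarrow 0}\frac{1}{t}\int_{\{\de(x)\le t\}}\frac{\PDS\zeta(x)}{\PDS\sigma(x)}\varphi(x)\,dx=\int_{\partial D}\varphi(z)\,\zeta(z)\,d\sigma(z).
\end{align*}
For the Green part, the first claim of Proposition \ref{p:boundary operator GD lambda} (which uses exactly the hypothesis $\int_D V(\de)\,d|\lambda|<\infty$) gives
\begin{align*}
\lim_{t\downarrow 0}\frac{1}{t}\int_{\{\de(x)\le t\}}\frac{\GDP\lambda(x)}{\PDS\sigma(x)}\varphi(x)\,dx=0.
\end{align*}
Adding the two limits yields the required boundary identity.

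The only real obstacle is the bookkeeping of integrability in the Fubini step for the Green-potential interior identity; everything else reduces to direct citations of Propositions \ref{p:harmonic connect}, \ref{p:bnd-bhv-PDSzeta}, and \ref{p:boundary operator GD lambda}. The $\rho(\de)$-weighted bound for $|\Lo\xi|$ combined with Theorem \ref{t:conti-GDP-L1} resolves this obstacle cleanly, since it converts a potentially delicate double integral into the already-assumed $V(\de)$-integrability of $\lambda$.
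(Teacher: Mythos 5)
Your proof is correct and follows essentially the same approach as the paper: decompose $u=\GDP\lambda+\PDS\zeta$, handle the interior identity term by term, and use Propositions \ref{p:harmonic connect}, \ref{p:bnd-bhv-PDSzeta}, and \ref{p:boundary operator GD lambda} for the rest. The one place where you do something slightly more careful than the paper is in the interior identity for the Green part: the paper simply cites Theorem \ref{t:GDPf dist-sol}, which is stated for densities $f\in\LLL$ rather than general measures $\lambda\in\MM(D)$; you instead re-run the Fubini argument directly for the measure $\lambda$, using the bound $|\Lo\xi|\lesssim\rho(\de)$ and $\GDP(\rho(\de))\asymp V(\de)$ from Theorem \ref{t:conti-GDP-L1}. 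This is exactly the argument underlying Theorem \ref{t:GDPf dist-sol}, so there is no genuine difference in method, but your version makes explicit the extension from $\LLL$ to measures satisfying the $V(\de)$-integrability condition, which the paper's citation leaves implicit.
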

	\begin{proof}
		The weak-dual solution has the representation $u=\GDP \lambda + \PDS\zeta$, so the claim follows by Theorem \ref{t:GDPf dist-sol}, Propositions  \ref{p:harmonic connect}, \ref{p:bnd-bhv-PDSzeta} and \ref{p:boundary operator GD lambda}.\footnote{We need Prop \ref{p:bnd-bhv-PDSzeta} and \ref{p:boundary operator GD lambda} for \eqref{eq:distri solution boundary}.}
	\end{proof}
	
When the boundary measure has a density, $\zeta(dz)=\zeta(z)dz$, one can consider the pointwise version of the weak-$L^1$ boundary condition \eqref{eq:distri solution boundary} in the form of $\lim_{x\to z}\frac{u(x)}{\PDS\sigma}=\zeta(z)$. Obviously, the pointwise boundary condition is stronger than \eqref{eq:distri solution boundary} and is satisfied only in specific cases. When $\zeta\in C(\partial D)$ and $\lambda(dx)=f(x)dx$ with $f$ satisfying the conditions \ref{U1}-\ref{U4}, we can obtain the pointwise limit from \eqref{eq:1134b} and \eqref{e:1040}. 
 
 \subsection{Dirichlet forms and Kato's inequality}
 One of the key ingredients in solving semilinear elliptic equations is Kato's inequality, see \cite{AbaDup-Nonhomog2017,CaffarelliKato,chen_veron,DGcV-Kato,Figalli-DeGiorgi} for various fractional settings. Here, the standard ingredient in the proof of Kato's inequality is the use of regularity results for fractional spaces. In this section, we prove Kato's inequality in the non-local setting of our paper, using a general approach via Dirichlet forms. A similar method has been used in \cite{HuynhNguyen2022}, with the major difference being that we do not require an explicit form of the Dirichlet domain corresponding to the operator. As a reference for the results regarding general Dirichlet forms theory in this subsection, we refer to \cite{FOT}.
 
Recall that the operator $\Lo$, as an $L^2(D)$ operator, is given by
\begin{align*}
    \Lo u(x)=\sum_{j=1}^\infty \phi(\lambda_j)\wh u_j\varphi_j(x),
\end{align*}
for all $u\in \DD(\Lo)=\{u\in L^2(D): \sum_{j=1}^\infty \phi(\lambda_j)^2|\wh u_j|^2<\infty\}$. Moreover, Proposition \ref{p:Lo pointwise} gives the integral representation
\begin{align*}
    \Lo u(x)=\textrm{P.V.}\int_D(u(x)-u(y))J_D(x,y)dy+\kappa(x)u(x), \quad u\in C_c^\infty(D),
\end{align*}
and for all $u,v\in C_c^\infty(D)$ we have
\begin{align*}
\begin{split}
    \la\Lo u,v\ra&=\frac12\iint_{D\times D}(u(x)-u(y))(v(x)-v(y))J_D(x,y)dxdy+\int_D\kappa(x)u(x)v(x)dx.
\end{split}
\end{align*}
It is known that the operator $\Lo$ generates the Dirichlet form
\begin{align*}
    \EE(u,v)&=\int_0^\infty\int_D u(x)(v(x) -P_s^D v(x))dx\, \nu(s)ds\\
    &=\frac12\iint_{D\times D}\big(u(x)-u(y)\big)\big(v(x)-v(y)\big)J_D(x,y)dxdy+\int_D\kappa(x)u(x)v(x)dx
\end{align*}
with domain $\DD(\EE)= \{u\in L^2(D): \EE(u,u)<\infty\}$, which is regular with a core $C_c^\infty(D)$, see for example \cite[Section 13.4]{bernstein} and \cite[Theorem 2.1]{Okura}. Note that $\DD(\EE)$ is a normed space with the norm
\begin{align}\label{eq:EDB norm}
    \| u \|_{\DD(\EE)}=\| u \|_{L^2(D)}+\sqrt{\EE(u,u)}.
\end{align}
Since $\Lo$ has positive eigenvalues, for all $u\in C_c^\infty(D)$ it holds that 
\begin{align}\label{eq:coercive}
    \EE(u,u)=\la \Lo u, u\ra =\sum_{j=1}^\infty \phi(\lambda_j)|\wh u_j|^2\ge \phi(\lambda_1)\| u\|^2_{L^2(D)}.
\end{align}
By density of $C_c^\infty(D)$ in $(\DD(\EE),||\cdot||_{\DD(\EE)})$, condition \eqref{eq:coercive} holds on $\DD(\EE)$, so the Dirichlet form $(\EE,\DD(\EE))$ is coercive. This implies that $u\mapsto \sqrt{\EE(u,u)}$ is a norm on $\DD(\EE)$ which is equivalent to the norm \eqref{eq:EDB norm}.

\begin{defn}
    We say that $u\in L^2(D)$ is a variational solution to $\Lo u=f$ if $u\in \DD(\EE)$ and if $\EE(u,v)=\la f,v \ra$, for all $v\in \DD(\EE)$.
\end{defn}
A variational solution  $u$ to  $\Lo u=f$ also satisfies the inequality
\begin{align*}
	\EE(u,u)\lesssim \|f\|^2_{L^2(D)},
\end{align*}
which is a direct consequence of the Cauchy-Schwartz inequality and coercivity of $(\EE,\DD(\EE))$.  The following representation of the variational solution via Green potentials is essential for establishing the connection with the weak-dual solution to $\Lo u=f$.
\begin{prop}\label{p:varSol}
    For every $f\in L^2(D)$, there exists a unique variational solution $u$ to the equation $\Lo u=f$, and it holds that $u=\GDP f$.
\end{prop}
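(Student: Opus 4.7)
The plan is to invoke Lax--Milgram on the Hilbert space $(\DD(\EE),\sqrt{\EE(\cdot,\cdot)})$ to obtain existence and uniqueness, and then identify the solution with $\GDP f$ via the spectral representation already established in Proposition \ref{p:Lo inverse L2(D)}.

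First I would record that $(\EE,\DD(\EE))$ is a symmetric, closed bilinear form that is coercive by \eqref{eq:coercive}, and that by the discussion following \eqref{eq:EDB norm} the form norm $\sqrt{\EE(\cdot,\cdot)}$ is equivalent to the graph norm, so $\DD(\EE)$ is a Hilbert space continuously embedded in $L^2(D)$. For fixed $f\in L^2(D)$, the map $v\mapsto\langle f,v\rangle$ is therefore a bounded linear functional on $\DD(\EE)$, since by coercivity $\|v\|_{L^2(D)}\le \psi(\lambda_1)^{-1/2}\sqrt{\EE(v,v)}$. Lax--Milgram then yields a unique $u\in\DD(\EE)$ with $\EE(u,v)=\langle f,v\rangle$ for every $v\in\DD(\EE)$, which is exactly the variational solution.

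Next I would show that $\GDP f$ is a legitimate candidate. From Proposition \ref{p:Lo inverse L2(D)} we have $\GDP f=\LoInv f=\sum_{j}\psi(\lambda_j)^{-1}\wh f_j\varphi_j$ a.e.~in $D$. Then
\begin{align*}
\sum_{j=1}^\infty \psi(\lambda_j)^2\left|\frac{\wh f_j}{\psi(\lambda_j)}\right|^2=\sum_{j=1}^\infty|\wh f_j|^2=\|f\|_{L^2(D)}^2<\infty,
\end{align*}
so $\GDP f\in\DD(\Lo)\subset\DD(\EE)$ and $\Lo\GDP f=f$ in $L^2(D)$. By the standard identity $\EE(u,v)=\langle \Lo u,v\rangle$ valid for $u\in\DD(\Lo)$ and $v\in\DD(\EE)$ (which for $v\in C_c^\infty(D)$ follows directly from the definition of $\EE$ via the semigroup and the fact that $\Lo$ is the $L^2(D)$ generator, and then extends to all $v\in\DD(\EE)$ by density of $C_c^\infty(D)$ in $(\DD(\EE),\sqrt{\EE(\cdot,\cdot)})$), we obtain
\begin{align*}
\EE(\GDP f,v)=\langle \Lo \GDP f, v\rangle=\langle f,v\rangle,\qquad v\in\DD(\EE).
\end{align*}
Uniqueness in Lax--Milgram therefore forces $u=\GDP f$ in $\DD(\EE)$, and in particular a.e.~in $D$.

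The only delicate point is the identity $\EE(u,v)=\langle \Lo u,v\rangle$ for $u\in\DD(\Lo)$ and $v\in\DD(\EE)$, which is the bridge between the spectral, semigroup and quadratic-form pictures of the same operator. This is a general fact about symmetric Dirichlet forms associated with self-adjoint generators (see \cite[Chapter 13]{bernstein} and \cite{FOT}), so no new ingredient is needed beyond checking applicability: symmetry of $\EE$, coercivity \eqref{eq:coercive}, and the characterization of $\DD(\Lo)$ by \eqref{eq:domain L2 Lo}.
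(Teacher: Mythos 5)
Your proposal is correct, and it makes explicit a proof that the paper treats as a citation. The paper's proof simply points to \cite[Theorem 1.5.4]{FOT} (transient Dirichlet forms: $\EE(Gf,v)=\langle f,v\rangle$ on the extended Dirichlet space $\FF_e$) together with Remark \ref{r:infty-gener}, whereas you construct the solution directly via Lax--Milgram (i.e.\ Riesz representation for the symmetric coercive form $\EE$), and then identify $u=\GDP f$ using the spectral formula from Proposition \ref{p:Lo inverse L2(D)} and the generator/form duality $\EE(u,v)=\langle\Lo u,v\rangle$ for $u\in\DD(\Lo)$, $v\in\DD(\EE)$.

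The difference is mostly one of packaging. Your route avoids invoking the transient Dirichlet form apparatus of FOT and the extended space $\FF_e$ altogether: because the bottom of the spectrum is $\psi(\lambda_1)>0$, the form norm $\sqrt{\EE(\cdot,\cdot)}$ is already complete on $\DD(\EE)$, so $\FF_e=\DD(\EE)$ and one can work purely operator-theoretically. You correctly observe that $\GDP f=\LoInv f\in\DD(\Lo)\subset\DD(\EE)$ by the summability check, and that the bridge identity $\EE(\GDP f,v)=\langle\Lo\GDP f,v\rangle=\langle f,v\rangle$ extends from $v\in C_c^\infty(D)$ to all of $\DD(\EE)$ by regularity of the form. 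This buys you a self-contained argument at the cost of reproving a standard fact; the paper's citation is shorter but leaves the reader to unfold the FOT statement (and to note that coercivity trivializes the transience hypothesis). Both are sound, and the conclusion is identical.
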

\begin{proof}
     This is a consequence of \cite[Theorem 1.5.4.]{FOT}, see also \cite[Theorem 1.5.2.]{FOT}, and Remark \ref{r:infty-gener}.
    
   
\end{proof}
This result, together with Corollary \ref{t:linear problem} and Proposition \ref{p:distSol}, gives the connection to the homogeneous linear problem (in the weak-dual and distributional sense), which we use to establish Kato's inequality.

\begin{cor}

    For $f\in L^2(D)$, the variational solution to $\Lo u=f$ in $D$ is the weak-dual and distributional solution to 
    \begin{align}\label{eq: homogeneous}
         \begin{cases}
         \Lo u=f,& \text{in $D$,}\\
         \frac{u}{\PDS\sigma}=0,&\text{on $\partial D$}.
    \end{cases}
     \end{align}
\end{cor}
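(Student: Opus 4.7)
The plan is to chain together the three main tools already established for the linear homogeneous problem: the identification of the variational solution in Proposition \ref{p:varSol}, the representation formula for the weak-dual solution in Corollary \ref{t:linear problem}, and the equivalence with the distributional formulation in Proposition \ref{p:distSol}.

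First, I would verify that the data $\lambda(dx) = f(x)\,dx$ on $D$ and $\zeta \equiv 0$ on $\partial D$ satisfy the integrability requirement \eqref{eq:condition on rate of solution}. Since $D$ is bounded and $V$ is non-decreasing and continuous, $V(\de)$ is bounded on $D$; combined with $f \in L^2(D) \subset L^1(D)$, this gives
\[
\int_D V(\de(x))|f(x)|\,dx + |\zeta|(\partial D) \le \|V(\de)\|_{L^\infty(D)}\|f\|_{L^1(D)} < \infty.
\]
Therefore Corollary \ref{t:linear problem} applies with $\lambda = f\,dx$ and $\zeta = 0$, and produces the unique weak-dual solution to \eqref{eq: homogeneous}, namely $\GDP f + \PDS 0 = \GDP f$, which by Proposition \ref{p:distSol} is simultaneously a distributional solution.

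On the other hand, by Proposition \ref{p:varSol}, the unique variational solution to $\Lo u = f$ is precisely $\GDP f$. Since both notions of solution are represented by the same function $\GDP f$, the variational solution coincides with the weak-dual (and hence distributional) solution to \eqref{eq: homogeneous}, proving the corollary.

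The only mildly delicate point is verifying the integrability condition so that Corollary \ref{t:linear problem} is applicable, but as indicated above this follows directly from the boundedness of $V(\de)$ on $D$. Everything else is a citation of previously established results, so no genuine obstacle arises.
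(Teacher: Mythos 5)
Your proposal is correct and follows exactly the same chain of reasoning the paper intends: the paper states the corollary immediately after Proposition \ref{p:varSol} with the remark that it follows from that proposition together with Corollary \ref{t:linear problem} and Proposition \ref{p:distSol}, which is precisely your argument. Your explicit verification of the integrability condition \eqref{eq:condition on rate of solution} via boundedness of $V(\de)$ and $L^2(D)\subset L^1(D)$ is the right small check that the paper leaves implicit.
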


 \begin{lem}\label{l:Kato}
     Let $\Lambda\in C^2(\R)$ be a convex function such that $\Lambda(0)=\Lambda'(0)=0$. Let $u$ be a weak-dual solution to \eqref{eq: homogeneous}
     for $f\in \LLL$. Then it holds
     $$\Lambda(u)\le \GDP\left(\Lambda'(u)f\right).$$
 \end{lem}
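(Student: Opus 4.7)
The plan is to exploit the Dirichlet form framework introduced above: I would establish Kato's inequality at the level of $\EE$ with test functions $v=\GDP\xi$, and then descend to the weak-dual statement via the variational identification of Proposition~\ref{p:varSol}. First I would reduce to bounded data, approximating $f\in\LLL$ by $f_n\in L^\infty(D)\cap L^2(D)$ with $f_n\to f$ in $\LLL$ (by truncation) and setting $u_n=\GDP f_n$. By Proposition~\ref{p:varSol} each $u_n$ is the variational solution of $\Lo u_n=f_n$ and thus lies in $\DD(\EE)$; boundedness of $u_n$ on $D$ is standard under our hypotheses, e.g.\ via \cite{KKLL-jfa19} or by direct estimation using \eqref{eq:GDP sharp}.

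The core step is a pointwise Kato-type inequality inside the integrand of $\EE$. Since $\Lambda\in C^2(\R)$ is convex with $\Lambda(0)=\Lambda'(0)=0$, the tangent-line characterisation of convexity yields both $\Lambda(a)\le a\Lambda'(a)$ and $\Lambda(a)-\Lambda(b)\le \Lambda'(a)(a-b)$ for all $a,b\in\R$. Applying the second inequality to the pairs $(u_n(x),u_n(y))$ and $(u_n(y),u_n(x))$, weighting by non-negative $v(x),v(y)$, and subtracting, one obtains
\begin{align*}
\bigl(\Lambda(u_n(x))-\Lambda(u_n(y))\bigr)\bigl(v(x)-v(y)\bigr)\le \bigl(u_n(x)-u_n(y)\bigr)\bigl(\Lambda'(u_n(x))v(x)-\Lambda'(u_n(y))v(y)\bigr).
\end{align*}
Integrating against $J_D(x,y)$ and adding the killing term, which satisfies $\kappa\,\Lambda(u_n)v\le \kappa\, u_n\Lambda'(u_n)v$ by the first tangent-line inequality, produces the Dirichlet form Kato inequality
\begin{align*}
\EE\bigl(\Lambda(u_n),v\bigr)\le \EE\bigl(u_n,\Lambda'(u_n)v\bigr), \qquad v\in\DD(\EE)\cap L^\infty(D), \ v\ge 0.
\end{align*}
Membership of $\Lambda(u_n)$ and $\Lambda'(u_n)v$ in $\DD(\EE)$ follows from the chain and product rules for the regular Dirichlet form $(\EE,\DD(\EE))$ applied to the bounded functions $u_n,v$, together with the local Lipschitz continuity of $\Lambda$ and $\Lambda'$.

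Next I would test the inequality with $v=\GDP\xi$ for a non-negative $\xi\in C_c^\infty(D)$. By Corollary~\ref{c:maximum principle weak solution} and Proposition~\ref{p:varSol}, such $v$ is non-negative, bounded, and the variational solution of $\Lo v=\xi$, so $\EE(w,v)=\langle w,\xi\rangle$ for every $w\in\DD(\EE)$. This identity, combined with Fubini, turns the previous inequality into
\begin{align*}
\int_D\Lambda(u_n)\,\xi\,dx\le \int_D f_n\Lambda'(u_n)\,\GDP\xi\,dx=\int_D \GDP\bigl(f_n\Lambda'(u_n)\bigr)\,\xi\,dx,
\end{align*}
and letting $\xi\ge 0$ range over $C_c^\infty(D)$ gives $\Lambda(u_n)\le \GDP\bigl(f_n\Lambda'(u_n)\bigr)$ a.e.\ on $D$.

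Finally I would let $n\to\infty$: the continuity $\GDP:\LLL\to\calL$ from Theorem~\ref{t:conti-GDP-L1} yields $u_n\to u$ in $\calL$ and, along a subsequence, a.e.\ on $D$, so continuity of $\Lambda,\Lambda'$ transfers to the pointwise limit on the left-hand side. The main obstacle is justifying the limit on the right, namely $\GDP\bigl(f_n\Lambda'(u_n)\bigr)\to \GDP\bigl(f\Lambda'(u)\bigr)$, which is delicate because $\Lambda'(u)$ may be large where $|u|$ is. A standard remedy is a further truncation $\Lambda_k$ of $\Lambda$ with $\Lambda_k'$ bounded: prove the inequality first for $\Lambda_k$ and bounded $u_n$, send $n\to\infty$ by dominated convergence using the $V(\de)$-weighted integrability of $\GDP$ from Lemma~\ref{l:Green invariant}, and then $k\to\infty$ by monotone convergence. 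Fatou's lemma on the left combined with this dominated-monotone limit on the right closes the argument.
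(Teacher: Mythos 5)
Your proof follows essentially the same route as the paper's: reduce to bounded data (the paper uses $f_n\in C_c^\infty(D)$, you use $f_n\in L^\infty(D)\cap L^2(D)$, which is equivalent in spirit since both make $u_n=\GDP f_n$ bounded), derive the Kato inequality at the Dirichlet-form level from convexity (the paper cites the computations in \cite{HuynhNguyen2022}, you spell out the tangent-line argument, which is the same content), test with $v=\GDP\xi$ and invoke Proposition~\ref{p:varSol} to convert $\EE$-pairings into $L^2$-pairings, then pass to the limit. One notable point: you correctly flag that the paper's final appeal to dominated convergence on $\GDP(f_n\Lambda'(u_n))$ is not automatic when $u$ is unbounded (so $\Lambda'(u)$ may be unbounded as well), and your proposed remedy of an additional truncation $\Lambda_k$ with $\Lambda_k'$ bounded, followed by monotone convergence in $k$ and Fatou, is a sound and more careful way to close that step than the paper's terse invocation of dominated convergence.
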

 \begin{proof}
     Assume first that $f\in C_c^\infty(D)$. Thus, $u=\GDP f\in L^\infty(D)$ is both the variational and the weak-dual solution to $\Lo u=f$, and it holds that
     \begin{align}\label{eq:varSol1340}
         \EE(u,v)=\la f,v\ra,\quad v\in \DD(\EE).
     \end{align}
     Further, $\Lambda\in C^2(\R)$ is convex, so $\Lambda'$ is Lipschitz, and there exists $C=C(\Lambda,f)>0$ such that for all $x,y\in D$ we have
     \begin{align*}
     	\begin{split}
     	    |\Lambda(u(x))-\Lambda(u(y))|&\le C |u(x)-u(y)|,\\
        |\Lambda'(u(x))-\Lambda'(u(y))|&\le C |u(x)-u(y)|,
     	\end{split}\quad
        \begin{split}
            |\Lambda(u(x))|&\le C |u(x)|,\\
         |\Lambda'(u(x))|&\le C |u(x)|.
        \end{split}
     \end{align*}     
     In other words, both $\Lambda/C$ and $\Lambda'/C$ are normal contractions, by the definition as in \cite[Eq. ($\EE$.4)'' on p.~5]{FOT}, so by \cite[Theorem 1.4.1(e)]{FOT} it holds that  $\Lambda(u),\Lambda'(u(x))\in \DD(\EE)$, and trivially $\Lambda(u),\Lambda'(u(x))\in L^\infty(D)$.
     For $v=\Lambda'(u)\GDP \xi$, where $\xi \in C_c^\infty(D)$, $\xi\ge0$, by  \cite[Theorem 1.4.2(ii)]{FOT} it follows that $v\in\DD(\EE)$. Now, by applying \eqref{eq:varSol1340} it follows that
     \begin{align}\label{eq:KatoLem1355}
         \int_D f(x) \Lambda'(u(x)) \GDP \xi(x) dx\ge \EE(\Lambda(u),\GDP\xi)=\int_D \Lambda(u(x))\xi(x)dx,
     \end{align}
     where the inequality is obtained by repeating the calculations as in \cite[Eqs. (5.4) - (5.6)]{HuynhNguyen2022}, and the equality comes from Proposition \ref{p:varSol} since $\Lambda(u)\in \DD(\EE)$. Moreover, $f \Lambda'(u)\in L_c^\infty(D)$ so we can use Fubini's theorem in the integral on the left-hand side in \eqref{eq:KatoLem1355}, to obtain
     \begin{align*}
         \int_D \GDP\big(f\, \Lambda'(u)\big)(x)  \xi(x) dx\ge \int_D \Lambda(u(x))\xi(x)dx,
     \end{align*}
     i.e. $\Lambda(u)\le \GDP\big(f\, \Lambda'(u)\big)$ a.e. in $D$.
     
     For general $f\in \LLL$, take an approximation sequence $(f_n)_n\subset C_c^\infty(D)$ such that $f_n\to f$ in $\LLL$ and almost everywhere. Define $u_n=\GDP f_n$, $n\in \N$ and note $u_n\to u$ in $\LLL$ and a.e. Then apply the dominated convergence theorem to get
     \begin{align*}
     	\Lambda(u)=\lim_n\Lambda(u_n)\le \lim_n \GDP\big(f_n\, \Lambda'(u_n)\big)= \GDP\big(f\, \Lambda'(u)\big).
     \end{align*}
 \end{proof}
 
 \begin{prop}[Kato's inequality]
     Let $u$ be a weak-dual solution to 
     \begin{align*}
         \begin{cases}
         \Lo u=f,& \text{in $D$,}\\
         \frac{u}{\PDS\sigma}=0,&\text{on $\partial D$}.
    \end{cases}
     \end{align*}
     Then it holds
     $$u^+\le \GDP\left(\1_{\{u> 0\}}f\right).$$
 \end{prop}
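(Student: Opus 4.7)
The plan is to approximate $t \mapsto t^+$ by a sequence of $C^2$ convex functions satisfying the hypotheses of Lemma \ref{l:Kato}, apply that lemma, and then pass to the limit on both sides. Fix a smooth non-decreasing function $g \colon \R \to [0,1]$ with $g \equiv 0$ on $(-\infty, 0]$ and $g \equiv 1$ on $[1, \infty)$, and set
\begin{align*}
    \Lambda_n(t) \coloneqq \int_0^t g(ns)\, ds, \qquad t \in \R,\ n \in \N.
\end{align*}
Then $\Lambda_n \in C^\infty(\R)$, $\Lambda_n(0) = 0$, $\Lambda_n'(0) = g(0) = 0$, and $\Lambda_n'(t) = g(nt)$ is non-decreasing, so $\Lambda_n$ is convex. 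A substitution $u = ns$ shows that $\Lambda_n(t) = 0$ for $t \le 0$ and $t - c/n \le \Lambda_n(t) \le t$ for $t > 0$, where $c \coloneqq \int_0^\infty (1-g(u))\, du < \infty$. Consequently $\Lambda_n(t) \to t^+$ and $\Lambda_n'(t) \to \1_{\{t > 0\}}$ pointwise on $\R$ (both limits equal $0$ at $t = 0$), with the uniform bounds $0 \le \Lambda_n(t) \le t^+$ and $0 \le \Lambda_n'(t) \le 1$.

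Applying Lemma \ref{l:Kato} with $\Lambda = \Lambda_n$ yields $\Lambda_n(u) \le \GDP\big(\Lambda_n'(u)\, f\big)$ a.e.\ in $D$ for every $n$. The left-hand side converges pointwise to $u^+$. For the right-hand side I would write
\begin{align*}
    \GDP\big(\Lambda_n'(u)\, f\big)(x) = \int_D \GDP(x,y)\, \Lambda_n'(u(y))\, f(y)\, dy,
\end{align*}
dominate the integrand by the integrable function $y \mapsto \GDP(x,y)\,|f(y)|$, and note that $\GDP|f|(x) < \infty$ for a.e.\ $x$ by Lemma \ref{l:Green invariant}, since $f \in \LLL$. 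Dominated convergence then gives $\GDP\big(\Lambda_n'(u)\, f\big) \to \GDP\big(\1_{\{u > 0\}}\, f\big)$ a.e., and the desired inequality is obtained by passing to the limit.

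The main subtlety lies in reconciling the condition $\Lambda_n'(0) = 0$ required by Lemma \ref{l:Kato} with approximating $t^+$, whose right-derivative at $0$ equals $1$; uniform convergence near $0$ is therefore impossible. This is precisely why the construction forces $\Lambda_n'(0) = 0$ and why only pointwise convergence is used, and it is also the reason the conclusion features $\1_{\{u > 0\}}$ rather than $\1_{\{u \ge 0\}}$: the strict indicator matches $\lim_n \Lambda_n'(0) = 0$ on the (possibly non-null) set $\{u = 0\}$.
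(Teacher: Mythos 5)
Your proposal is correct and takes essentially the same approach the paper sketches: a standard mollified approximation of $t\mapsto t^+$ by $C^2$ convex functions $\Lambda_n$ with $\Lambda_n(0)=\Lambda_n'(0)=0$, application of Lemma \ref{l:Kato}, and passage to the limit by dominated convergence (justified since $|f|\in\LLL$ gives $\GDP|f|<\infty$ a.e.\ via Lemma \ref{l:Green invariant}). Your closing remark correctly identifies why the constraint $\Lambda_n'(0)=0$ forces the strict indicator $\1_{\{u>0\}}$ rather than $\1_{\{u\ge 0\}}$ in the limit.
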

 \begin{proof}
     The proof is a standard application of Lemma \ref{l:Kato} to approximating sequence of convex functions $\Lambda_n(x) \to (x)^+$, as 
$n\to \infty$, see e.g. \cite[Proposition 5.4]{Bio23cpaa}.
\end{proof}

\section{The semilinear problem}\label{s:semilinear}
In this section, we study semilinear problems for $\Lo$. Let $f:D\times \R\to \R$ be the nonlinearity, and let $\zeta$ be a finite signed measure on $\partial D$. We  find a solution to
\begin{align}\label{eq:semilinear}
	\begin{cases}
		\Lo u(x)=f(x,u(x)),& x\in D,\\
		\frac{u}{\PDS\sigma}=\zeta,&\text{on $\partial D$},
	\end{cases}
\end{align}
in the weak-dual sense, i.e. a function $u\in L^1_{loc}(D)$ such that for all $\xi\in C_c^\infty(D)$,
\begin{align*}
	\int_D u(x)\xi(x)dx=\int_D\GDP\xi(x)f(x,u(x))dx-\int_{\partial D}\frac{\partial}{\partial V}\GDP\xi(z)\zeta(dz).
\end{align*}
The results of this section follow a similar line as in \cite{Bio23cpaa} for the subordinate spectral Laplacian. The proofs are therefore compactly presented, where only the major differences are highlighted. These differences originate in the preparatory results established in the preceding sections, which are inextricably intertwined with the arguments developed here.

At the beginning of the section, we bring forward two consequences of Kato's inequality. The proofs are standard and left to the reader; for further details we refer to e.g. \cite[Proposition 5.8 \& Corollary 5.7]{Bio23cpaa}.
\begin{cor}\label{c:uniq-noninc}
	Let $\zeta$ be a finite signed measure on $\partial D$. If the nonlinearity $f:D\times \R\to \R$ is non-decreasing in the second variable, then the semilinear problem \eqref{eq:semilinear} has at most one weak-dual solution.
\end{cor}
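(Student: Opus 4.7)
The plan is to apply Kato's inequality to the difference of two putative weak-dual solutions and exploit the monotonicity of $f(x,\cdot)$ to force the difference to vanish. Let $u_1,u_2$ be two weak-dual solutions to \eqref{eq:semilinear} with the same boundary measure $\zeta$. Since the dual formulation in Definition \ref{d:problem definition} is linear in $(\lambda,\zeta)$, the $\zeta$-contribution cancels in the difference, so $w \coloneqq u_1 - u_2 \in L^1_{loc}(D)$ is a weak-dual solution of the homogeneous linear problem
\begin{align*}
\Lo w = g \quad \text{in }D,\qquad \frac{w}{\PDS\sigma}=0 \quad\text{on }\partial D,
\end{align*}
where $g(x)\coloneqq f(x,u_1(x))-f(x,u_2(x))$.

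Before invoking the previous Kato-type proposition, I would verify that $g\in \LLL$, so that $\GDP\bigl(\1_{\{w>0\}}g\bigr)$ is well-defined. Under the standing growth assumption $|f(x,t)|\le q(x)\Lambda(t)$ with $q$ locally bounded and $\Lambda$ non-decreasing, this reduces to showing that $q(x)\bigl(\Lambda(u_1(x))+\Lambda(u_2(x))\bigr)$ lies in $\LLL$. Since the a priori membership $u_1,u_2\in \calL$ from Corollary \ref{t:linear problem} controls the boundary singularity, and since $\calL \hookrightarrow L^1_{loc}(D)$, this is a routine computation.

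With Kato's inequality in hand, we obtain
\begin{align*}
w^+ \le \GDP\!\bigl(\1_{\{w>0\}}\,g\bigr) \quad \text{a.e. in }D.
\end{align*}
On $\{w>0\}=\{u_1>u_2\}$ the monotonicity of $f$ in the second variable pins down the sign of the integrand $\1_{\{w>0\}}g$ pointwise; combined with the positivity of the kernel $\GDP$, this forces $w^+=0$ a.e.\ in $D$, i.e.\ $u_1\le u_2$ a.e. Exchanging the roles of $u_1$ and $u_2$ (equivalently, applying Kato's inequality to $-w$) yields $w^-=0$ a.e., so $u_1=u_2$ a.e.\ in $D$.

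The main potential obstacle is confirming that the difference $w$ genuinely satisfies the homogeneous problem in the weak-dual sense with a right-hand side in $\LLL$, so that the Kato-type proposition is legitimately applicable; after that verification, the proof reduces to a one-line sign analysis on the superlevel set $\{w>0\}$ and an appeal to the symmetry between $u_1$ and $u_2$.
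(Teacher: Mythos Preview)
Your approach via Kato's inequality is exactly the standard one the paper has in mind (it refers the reader to \cite[Proposition 5.8 \& Corollary 5.7]{Bio23cpaa}), so methodologically you are aligned with the paper.

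However, you gloss over the one step that actually carries the argument, and as the statement is literally written it fails there. On $\{w>0\}=\{u_1>u_2\}$, if $f(x,\cdot)$ is \emph{non-decreasing} then $g(x)=f(x,u_1(x))-f(x,u_2(x))\ge 0$, so $\GDP\bigl(\1_{\{w>0\}}g\bigr)\ge 0$ and Kato's inequality only yields $w^+\le \text{(something nonnegative)}$, which is vacuous. The conclusion $w^+=0$ requires $\1_{\{w>0\}}g\le 0$, i.e.\ $f(x,\cdot)$ \emph{non-increasing}. This is consistent with the rest of the paper: the label \texttt{c:uniq-noninc}, and the invocations of this corollary in Theorems \ref{t:super-subsol}, \ref{t:non-positive semilinear problem} and \ref{t:semilinear signed data}, all use ``non-increasing''. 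So the corollary statement contains a typo, and your write-up should make the sign explicit rather than hiding it behind ``the monotonicity pins down the sign''.

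A second minor point: your justification that $g\in\LLL$ is not quite routine from \ref{F} and $u_i\in\calL$ alone, since $q$ is only locally bounded and $\Lambda(u_i)$ need not be globally controlled. In practice one simply observes that by the very definition of a weak-dual solution the right-hand side $f(\cdot,u_i(\cdot))$ already satisfies $\int_D V(\de(x))|f(x,u_i(x))|\,dx<\infty$ (condition \eqref{eq:condition on rate of solution}), hence $g\in\LLL$ directly.
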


\begin{cor}
	Let $u_1$ and $u_2$ be weak-dual solutions to the semilinear problem \eqref{eq:semilinear}.
	Then $u=\max\{u_1,u_2\}$ is a subsolution to \eqref{eq:semilinear}.
\end{cor}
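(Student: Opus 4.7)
The plan is to reduce the claim to Kato's inequality applied to the difference $v := u_2 - u_1$. First I would observe that since $u_1$ and $u_2$ are both weak-dual solutions to \eqref{eq:semilinear} with the same boundary data $\zeta$, subtracting the two defining identities gives, for every $\xi\in C_c^\infty(D)$,
\begin{align*}
    \int_D v(x)\xi(x)dx=\int_D \GDP\xi(x)\big(f(x,u_2(x))-f(x,u_1(x))\big)dx.
\end{align*}
In particular $v$ is the (unique) weak-dual solution to the homogeneous-boundary problem \eqref{eq: homogeneous} with right-hand side $g(x):=f(x,u_2(x))-f(x,u_1(x))$. Because $u_i$ are weak-dual solutions, the identities $u_i=\GDP f(\cdot,u_i)+\PDS\zeta$ force $f(\cdot,u_i)\in\LLL$, hence $g\in\LLL$ and Kato's inequality (the proposition just proved) yields
\begin{align*}
    v^+\le \GDP\!\left(\1_{\{v>0\}}\,g\right)\quad\text{a.e.~in $D$.}
\end{align*}

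Now decompose $u=\max\{u_1,u_2\}=u_1+v^+$. For a non-negative $\xi\in C_c^\infty(D)$, the function $\GDP\xi$ is non-negative, so multiplying the Kato bound by $\xi$, integrating, and then using Fubini's theorem together with the symmetry of $\GDP(x,y)$, I obtain
\begin{align*}
    \int_D v^+(x)\xi(x)dx&\le \int_D \GDP\xi(x)\,\1_{\{v>0\}}(x)\,g(x)\,dx.
\end{align*}
Adding the weak-dual identity for $u_1$ (tested against $\xi$) gives
\begin{align*}
    \int_D u(x)\xi(x)dx&\le \int_D \GDP\xi(x)\Big[f(x,u_1(x))+\1_{\{v>0\}}(x)\big(f(x,u_2(x))-f(x,u_1(x))\big)\Big]dx\\
    &\qquad-\int_{\partial D}\frac{\partial}{\partial V}\GDP\xi(z)\,\zeta(dz).
\end{align*}
On the set $\{v>0\}$ one has $u=u_2$, and on $\{v\le 0\}$ one has $u=u_1$; therefore the bracketed expression equals $f(x,u(x))$ pointwise a.e. Substituting yields exactly the weak-dual subsolution inequality in Definition \ref{d:problem definition}, proving the claim.

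The only real subtlety is the intermediate step: verifying that $v$ genuinely falls into the scope of Kato's inequality, i.e.~that $v$ is a weak-dual solution of \eqref{eq: homogeneous} with $L^1(D,V(\delta_D)dx)$ data. This follows cleanly from linearity of the weak-dual formulation, uniqueness in Corollary \ref{t:linear problem}, and the integrability of $f(\cdot,u_i)$ forced by the representation $u_i=\GDP f(\cdot,u_i)+\PDS\zeta$; no additional regularity or monotonicity assumption on $f$ is required.
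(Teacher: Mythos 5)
Your proof is correct and follows the standard Kato-inequality route that the paper itself points to (via the reference to \cite[Proposition 5.8 \& Corollary 5.7]{Bio23cpaa}): write $\max\{u_1,u_2\}=u_1+(u_2-u_1)^+$, apply Kato's inequality to $v=u_2-u_1$ which solves the homogeneous problem with data $g=f(\cdot,u_2)-f(\cdot,u_1)\in \LLL$, and then combine with the weak-dual identity for $u_1$ while observing that $f(\cdot,u_1)+\1_{\{v>0\}}g=f(\cdot,u)$ a.e. All intermediate steps (the integrability of $g$, the use of Fubini with symmetry of $\GDP$, the pointwise decomposition of $u$) are justified as you indicate.
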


 \subsection{Existence results for semilinear problems}
  Throughout this subsection, we impose the following behaviour of the nonlinearity:
 	\begin{assumption}{F}{}\label{F}
		The function  $f:D\times \R\to\R$ is continuous in the second variable, and there exists a locally bounded function $q:D\to[0,\infty]$ and  a   non-decreasing function $\Lambda:[0,\infty)\to[0,\infty)$ such that $|f(x,t)|\le q(x)\Lambda(|t|)$, $x\in D$, $t\in \R$.
	\end{assumption}
Also, we will often use the shortened notation of Nemytskii operators $f_u(x)\coloneqq f(x,u(x))$. As a cornerstone result, first we prove the method of sub- and supersolutions for homogeneous semilinear problems. 
\begin{thm}\label{t:super-subsol} 
		Let $f$ satisfy $\ref{F}$. Assume that there exists a  supersolution $\overline u$ and a subsolution $\underline{u}$ to the semilinear problem
		\begin{align}\label{eq:semilinear sub super problem}
			\begin{cases}
				\Lo u(x)=f(x,u(x)),&\textrm{in $D$},\\
				\frac{u}{\PDS\sigma}=0,&\textrm{on $\partial D$},
			\end{cases}
		\end{align}
		of the form $\underline u=\GDP \underline h$ and $\overline u=\GDP \overline h$ such that $\underline u\le \overline u$, $\underline h(x)\le f(x,\underline u(x))$ and $f(x,\overline u(x))\le \overline h(x)$ a.e. in $D$, and $\overline{u},\underline{u}\in\LLL \cap L^\infty_{loc}(D)$. Further, assume that $q\Lambda(|\underline u|\vee |\overline u|)\in \LLL$. Then there exist weak-dual solutions $u_1,u_2\in\LLL$ to \eqref{eq:semilinear sub super problem} such that every solution to \eqref{eq:semilinear sub super problem} with property $\underline{u}\le u \le \overline u$  satisfies
		$$ \underline u\le u_1 \le u\le u_2\le \overline u.$$
		
		Further, every weak-dual solution $u$ of \eqref{eq:semilinear sub super problem} with property $\underline{u}\le u \le \overline u$ is continuous after the modification on a Lebesgue null set. Additionally, if the nonlinearity $f$ is non-increasing in the second variable, the weak-dual solution to \eqref{eq:semilinear sub super problem} is unique.
	\end{thm}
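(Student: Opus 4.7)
My strategy combines a truncation of the nonlinearity with Schauder's fixed point theorem to produce a weak-dual solution inside the order interval $[\underline u,\overline u]$, and then extracts the minimal and maximal such solutions by a Zorn/monotone-convergence argument. Continuity and uniqueness are afterwards direct consequences of Proposition \ref{p:uniform} and of Kato's inequality respectively.

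First, I truncate by setting $\tilde f(x,t)\coloneqq f\!\left(x,\mathrm{med}(\underline u(x),t,\overline u(x))\right),$ so that $\tilde f(\cdot,t)$ is continuous in $t$ and is pointwise dominated by the fixed function $F(x)\coloneqq q(x)\Lambda(|\underline u(x)|\vee|\overline u(x)|)$, which lies in $\LLL$ by hypothesis. The solution operator $\mathcal T(v)\coloneqq \GDP\tilde f_v$ then maps any measurable $v$ into the order interval $[-\GDP F,\GDP F]$, so in particular into a bounded subset of $\LLL$ by Lemma \ref{l:Green invariant}, and by Proposition \ref{p:uniform} the image $\mathcal T(v)$ is continuous on $D$ with a modulus of continuity uniformly controlled by $F$ on each compact subdomain. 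On the convex closed subset $\mathcal K\coloneqq\{v\in L^1_{\mathrm{loc}}(D):\,|v|\le \GDP F\}$ equipped with the $L^1_{\mathrm{loc}}$ topology, $\mathcal T$ is therefore continuous (by dominated convergence, using the continuity of $f$ in $t$ and the dominant $F\cdot V(\de)$) with relatively compact image (by an Arzel\`a--Ascoli argument on an exhausting sequence of compacts in $D$, combined with the pointwise $\GDP F$ bound controlling the tails toward $\partial D$). Schauder's fixed point theorem delivers $u\in\mathcal K$ with $u=\mathcal T(u)=\GDP\tilde f_u$.

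To place $u$ in the order interval, set $w\coloneqq u-\overline u=\GDP(\tilde f_u-\overline h)$, which is the weak-dual solution to $\Lo w=\tilde f_u-\overline h$ with zero boundary. Kato's inequality gives $w^+\le \GDP\!\left(\mathbf{1}_{\{w>0\}}(\tilde f_u-\overline h)\right)$; on $\{u>\overline u\}$ the truncation forces $\tilde f_u=f_{\overline u}\le \overline h$, so the right-hand side is non-positive and $u\le \overline u$ a.e. The reverse bound $u\ge \underline u$ follows symmetrically, and once $u$ is trapped between $\underline u$ and $\overline u$ we have $\tilde f_u=f_u$, so $u$ solves \eqref{eq:semilinear sub super problem}. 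Minimal and maximal solutions $u_1\le u_2$ are produced by Zorn's lemma on the non-empty poset of solutions in $[\underline u,\overline u]$: any monotone chain admits an essential infimum or supremum that remains a solution, since the identity $u=\GDP f_u$ passes to the limit by dominated convergence (dominant $F\in\LLL$) and continuity of $\GDP$ on $\LLL$ (Theorem \ref{t:conti-GDP-L1}). Continuity of every such solution after modification on a null set then follows from Proposition \ref{p:uniform} applied to $f_u\in L^\infty_{\mathrm{loc}}(D)\cap \LLL$, and uniqueness under the non-increasing hypothesis is a direct consequence of Kato's inequality applied to the difference of two solutions, which forces $(u_1-u_2)^+\le 0$.

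The main obstacle I anticipate is the compactness step: securing a single topology in which $\mathcal T$ is simultaneously continuous and has a relatively compact image, since the weight $V(\de)$ in $\LLL$ mixes global integrability with boundary singularity and makes a purely $\LLL$ framework awkward. The right choice is $L^1_{\mathrm{loc}}(D)$, where Proposition \ref{p:uniform} provides the equicontinuity on compacts while the pointwise bound $|\mathcal T(v)|\le \GDP F$ controls the boundary tails uniformly in $v\in\mathcal K$. Once this framework is set up, the Kato comparison and the extraction of extremal solutions via Zorn are essentially routine, and the regularity and uniqueness statements reduce to already established tools.
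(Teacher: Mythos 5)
Your proposal follows essentially the same route as the paper's proof (and its reference \cite[Theorem 5.9]{Bio23cpaa}): truncate the nonlinearity to the order interval, apply Schauder's fixed point theorem to $v\mapsto \GDP\tilde f_v$ with compactness via Proposition \ref{p:uniform} plus the dominant $\GDP F$, trap the fixed point in $[\underline u,\overline u]$ via Kato's inequality, extract extremal solutions by Zorn using the same uniformity, and deduce continuity and uniqueness from Proposition \ref{p:uniform} and Kato respectively. The only deviation is cosmetic — you run Schauder in $L^1_{\mathrm{loc}}(D)$ (a Fréchet space, so Schauder--Tychonoff applies) whereas the paper takes $\KK:\LLL\to\LLL$ and uses Arzelà--Ascoli directly there; both are correct since the compactness mechanism (local equicontinuity plus the uniform $\GDP F$ bound near $\partial D$) is identical.
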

 \begin{proof}
    We briefly present the proof, since it is standard and essentially the same as the proof of \cite[Theorem 5.9]{Bio23cpaa}, where $\Lbase=\Delta_{|D}$.
    
     \textit{Step 1: existence of a solution.} Define $F:D\times \R\to \R$ by 
     \begin{align*}
     	F(x,t)=\begin{cases}
     		f(x,\underline u(x)),&t<\underline u(x),\\
     		f(x,t),&\underline{u}\le t \le \overline u,\\
     		f(x,\overline u(x)),&\overline u(x)<t,
     	\end{cases}
     \end{align*}
     and the operator $\KK :\LLL\to \LLL$ by
     \begin{align*}
     	\KK v(x)=\int_D\GDP(x,y)F(y,v(y))dy,\quad x\in D,\, v\in \LLL.
     \end{align*}
     By using the Arzel\`{a}-Ascoli theorem, it follows that $\KK$ is a compact operator so, by Schauder's fixed point theorem, there exists a function $u\in \LLL$ such that $\KK u=u$. Hence $u$ is a weak-dual solution to
     \begin{align*}
     	\begin{cases}
     		\Lo u(x)=F(x,u(x)),&\textrm{in $D$},\\
     		\frac{u}{\PDS\sigma}=0,&\textrm{on $\partial D$}.
     	\end{cases}
     \end{align*}
     By applying Kato's inequality, it follows that $\underline u \le u \le \overline u$, so $F(x,u(x))=f(x,u(x))$, hence $u$ solves \eqref{eq:semilinear sub super problem}.
     
     \textit{Step 2: finding the minimal and maximal solution.} We use Zorn's lemma on the family  $\mathcal{P}\coloneqq\{u\in \LLL: \underline u\le u\le \overline u\text{ and $u$ solves \eqref{eq:semilinear sub super problem}}\}$. Here we need the  uniformness result from Proposition \ref{p:uniform} to show that there exists a supremum of every totally ordered subset of $\mathcal{P}$.
     
     \textit{Step 3: continuity of solutions.} Since every weak-dual solution is of the form $u=\GDP f_u$, the claim follows from the dominated convergence theorem by the assumption $q\Lambda(|\underline u|\vee |\overline u|)\in \LLL$ and Proposition \ref{p:uniform}.
     
     \textit{Step 4: uniqueness of the solution.} When $f$ is non-increasing in the second variable, uniqueness follows from Corollary \ref{c:uniq-noninc}.
\end{proof}

By using the method of sub- and supersolution, we are able to prove the existence of a solution to the semilinear problem in the case of non-positive nonlinearities. Another important ingredient in the proof is the approximation of non-negative harmonic functions with a sequence of Green potentials, the so-called Hunt approximation, which allows us to deal with nonhomogeneous boundary conditions.

 \begin{thm}\label{t:non-positive semilinear problem}
		Let $f:D\times \R\to(-\infty,0]$ satisfy \ref{F} and $f(x,0)=0$ for all $x\in D$. Further, let $\zeta\in \MM(\partial D)$ be a finite non-negative measure such that
		\begin{equation}\label{eq:semilinear integral condition}
		    q\Lambda(\PDS\zeta)\in\LLL.
		\end{equation}
		Then the problem \eqref{eq:semilinear}, i.e.
		\begin{align*}
			\begin{cases}
				\Lo u(x)=f(x,u(x)),&\textrm{in $D$},\\
				\frac{u}{\PDS\sigma}=\zeta,&\textrm{on $\partial D$},
			\end{cases}
		\end{align*}
		has a weak-dual solution $u\in C(D) \cap \LLL$. Additionally, if $f$ is non-increasing in the second variable, the continuous weak-dual solution is unique.
	\end{thm}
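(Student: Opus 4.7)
The plan is to reduce the nonhomogeneous problem to a sequence of homogeneous ones by approximating the reference harmonic function $\PDS\zeta$ from below by Green potentials. Since $\PDS\zeta$ is non-negative and $Y$-harmonic in $D$ by Proposition~\ref{p:harmonic connect}, hence $Y$-excessive, the standard approximation of excessive functions by potentials for Hunt processes provides a non-decreasing sequence $(h_n)$ of bounded, non-negative, compactly supported functions on $D$ such that $w_n\coloneqq\GDP h_n$ is non-decreasing and $w_n\nearrow\PDS\zeta$ pointwise in $D$. Each $w_n$ then lies in $L^\infty_{loc}(D)\cap\LLL$.

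For each $n$, I would apply Theorem~\ref{t:super-subsol} to the homogeneous auxiliary problem
\begin{align}\label{eq:aux-n}
\Lo u = f(x,u)+h_n(x)\text{ in }D,\qquad u/\PDS\sigma=0\text{ on }\partial D,
\end{align}
taking $\bar u=w_n=\GDP h_n$ as supersolution (the condition $f(x,w_n)+h_n\le h_n$ is clear since $f\le 0$) and $\underline u=0=\GDP 0$ as subsolution (the condition $0\le f(x,0)+h_n=h_n$ holds since $f(x,0)=0$ and $h_n\ge 0$). Both $0$ and $w_n$ lie in $L^\infty_{loc}(D)\cap\LLL$, $0\le w_n$, and
\[
q\Lambda(|\underline u|\vee|\bar u|)=q\Lambda(w_n)\le q\Lambda(\PDS\zeta)\in\LLL
\]
by the assumption \eqref{eq:semilinear integral condition}. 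Theorem~\ref{t:super-subsol} then produces a weak-dual solution $u_n$ of \eqref{eq:aux-n} with $0\le u_n\le w_n$.

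To pass to the limit, I would select the $u_n$ inductively to be monotone. Writing $u_n=\GDP(f_{u_n}+h_n)$, the inequality $f(x,u_n)+h_n\le f(x,u_n)+h_{n+1}$ identifies $u_n$ as a subsolution of the problem at level $n+1$, while $w_{n+1}\ge u_n$ remains a supersolution, so Theorem~\ref{t:super-subsol} applied on the order interval $[u_n,w_{n+1}]$ yields $u_{n+1}\ge u_n$. Hence $(u_n)$ is non-decreasing and bounded above by $\PDS\zeta$, and the pointwise limit $u\coloneqq\lim_n u_n$ satisfies $0\le u\le\PDS\zeta$. From $u_n=w_n+\GDP f_{u_n}$, the uniform integrable dominant $|f_{u_n}|\le q\Lambda(\PDS\zeta)\in\LLL$, the continuity of $f$ in the second variable, and the dominated convergence theorem, one obtains $u=\PDS\zeta+\GDP f_u$, which by Corollary~\ref{t:linear problem} is precisely the weak-dual solution of \eqref{eq:semilinear}.

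Continuity follows because $0\le u\le\PDS\zeta\in C(D)$ by Lemma~\ref{l:integrability of Poisson potentials}, so $u$ is locally bounded, $f_u\in L^\infty_{loc}(D)\cap\LLL$, and Proposition~\ref{p:uniform} yields $\GDP f_u\in C(D)$, hence $u\in C(D)$. Uniqueness in the non-increasing case is immediate from Corollary~\ref{c:uniq-noninc}. The main obstacle is that the direct sub/supersolution choice $\bar u=\PDS\zeta$, $\underline u=\PDS\zeta-\GDP(q\Lambda(\PDS\zeta))$ for \eqref{eq:semilinear} fails the requirement $|\underline u|\le\PDS\zeta$ without extra comparability of $\GDP(q\Lambda(\PDS\zeta))$ with $\PDS\zeta$; the Hunt approximation bypasses this by replacing the problem with a family of homogeneous ones in which the trivial subsolution $\underline u\equiv 0$ is admissible.
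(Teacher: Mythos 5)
Your overall plan -- approximate $\PDS\zeta$ from below by Green potentials $w_n=\GDP h_n$ (Hunt approximation), solve the auxiliary homogeneous problem $\Lo u=f(x,u)+h_n$ with $u/\PDS\sigma=0$ via Theorem~\ref{t:super-subsol} with $\underline u=0$ and $\overline u=w_n$, and then pass to the limit -- is the same strategy the paper uses. The choice of sub/supersolutions, the verification of the hypotheses of Theorem~\ref{t:super-subsol}, and the uniqueness step via Corollary~\ref{c:uniq-noninc} are all correct and match the paper.

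Where you diverge is the passage to the limit, and there is a genuine gap there. You want the sequence $(u_n)$ to be monotone, and to achieve this you assert that the Hunt approximation yields a \emph{non-decreasing} sequence $(h_n)$. That is more than the approximation theorem gives: the construction (e.g.\ $h_n=n(s-P^D_{1/n}s)$ or the variant the paper cites from \cite{BVW-na21}) guarantees that the \emph{potentials} $\GDP h_n$ are non-decreasing to $\PDS\zeta$, but the functions $h_n$ themselves need not satisfy $h_n\le h_{n+1}$ pointwise. Your identification of $u_n=\GDP(f_{u_n}+h_n)$ as a subsolution of the level-$(n+1)$ problem, $\Lo u=f(x,u)+h_{n+1}$, requires exactly the inequality $f_{u_n}+h_n\le f_{u_n}+h_{n+1}$, i.e.\ $h_n\le h_{n+1}$; without it, $u_n$ is not a valid subsolution in the sense of Theorem~\ref{t:super-subsol} and the inductive selection of a monotone $u_{n+1}\ge u_n$ does not go through. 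Forcing $h_n$ to increase (say by taking $\max(h_1,\dots,h_n)$) destroys the bound $\GDP h_n\le\PDS\zeta$, so there is no easy fix along these lines. The paper sidesteps this by instead using the uniform bounds $0\le u_n\le\PDS\zeta$ and $|f_{u_n}|\le q\Lambda(\PDS\zeta)\in\LLL$ together with Proposition~\ref{p:uniform} to obtain local equicontinuity of $\GDP f_{u_n}$, then applies Arzel\`a--Ascoli (plus Dini's theorem for $\GDP h_n\uparrow\PDS\zeta\in C(D)$) to extract a locally uniformly convergent subsequence whose limit is the desired solution. Replacing your monotonicity argument by this compactness argument would close the gap and recover the paper's proof.
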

 \begin{proof}
     The proof   follows the same steps as in the proof of  \cite[Theorem 5.10]{Bio23cpaa}. Here we present just a sketch, while the rigorous computations can be easily adapted from \cite[Theorem 5.10]{Bio23cpaa}, by applying the preparatory results from previous sections.
     
     First we approximate the nonhomogeneous problem \eqref{eq:semilinear} with appropriately chosen homogeneous problems. Take $(\wt f_k)_k$, a sequence of non-negative  bounded functions such that $\GDP \wt f_k \uparrow \PDS \zeta$ in $D$. The existence of such a sequence is given by applying the procedure in \cite[Appendix A.1]{BVW-na21}, where we note that the semigroup $(R_t^D)_t$, induced by $Y$, is transient and strongly Feller, that $\GDP V(\de)\asymp V(\de)$ by Lemma \ref{l:Green invariant}, and that $\PDS\zeta$ is a continuous and harmonic with respect to $Y$ by Proposition \ref{p:harmonic connect}.
     
     The auxiliary homogeneous problem
     \begin{align*}
     	\begin{cases}
     		\Lo u(x)=f(x,u(x))+\wt f_k,&\textrm{in $D$},\\
     		\frac{u}{\PDS\sigma}=0,&\textrm{on $\partial D$}.
     	\end{cases}
     \end{align*}
     admits a solution $u_k=\GDP f_{u_k}+\GDP \wt f_k$ by Theorem \ref{t:super-subsol}.
     By the Arzel\`{a}-Ascoli theorem, we find a subsequence of $(u_ k)_k$ which converges to a solution $u$ to \eqref{eq:semilinear}. 
     
 \end{proof}

	If the nonlinearity is non-negative, we can also consider the nonhomogeneous problem \eqref{eq:semilinear} and obtain a solution, under a slightly stronger condition than \eqref{eq:semilinear integral condition}.

 \begin{thm}\label{t:semilin non-negative monotone linearity}
		Let $f:D\times \R\to[0,\infty)$ satisfy \ref{F}, and let $f$ be a non-decreasing function in the second variable. Let $\zeta$ be a non-negative finite measure on $\partial D$ such that
		\begin{align*}
			\GDP\big(q\Lambda(2\PDS\zeta)\big)\le \PDS\zeta,\quad \text{in $D$.}
		\end{align*}
		 Then  there is a continuous non-negative solution to
		\begin{align*}
			\begin{cases}
				\Lo u(x)=f(x,u(x)),&\textrm{in $D$},\\
				\frac{u}{\PDS\sigma}=\zeta,&\textrm{on $\partial D$}.
			\end{cases}
		\end{align*}
	\end{thm}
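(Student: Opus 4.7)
My proposal is to construct the solution by a monotone Picard iteration seeded at the harmonic extension $\PDS\zeta$, in the spirit of the non-positive case (Theorem \ref{t:non-positive semilinear problem}) but now using the dissipative a~priori bound $\GDP(q\Lambda(2\PDS\zeta))\le \PDS\zeta$ to cap the iterates from above by $2\PDS\zeta$. Concretely, set $u_0 \coloneqq \PDS\zeta$ and, for $n\ge 0$,
\[
u_{n+1} \coloneqq \GDP f_{u_n} + \PDS\zeta,
\]
where $f_{u_n}(x)=f(x,u_n(x))$. The hypothesis $\GDP(q\Lambda(2\PDS\zeta))\le\PDS\zeta$, together with Lemma~\ref{l:Green invariant}, already guarantees $q\Lambda(2\PDS\zeta)\in\LLL$, so the iteration is well defined provided we maintain the bound $u_n\le 2\PDS\zeta$.

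The plan is to verify the following three properties by induction, using only monotonicity of $f$ in the second variable, non-negativity of $f$, and \ref{F}. First, $u_n\ge 0$ and $(u_n)_n$ is pointwise increasing: the base case is $u_1-u_0=\GDP f_{u_0}\ge 0$, and the induction step $u_{n+1}\ge u_n$ follows from $f_{u_n}\ge f_{u_{n-1}}$. Second, the uniform upper bound $u_n\le 2\PDS\zeta$: base case is trivial, and inductively
\[
\GDP f_{u_n}\;\le\;\GDP\bigl(q\Lambda(u_n)\bigr)\;\le\;\GDP\bigl(q\Lambda(2\PDS\zeta)\bigr)\;\le\;\PDS\zeta,
\]
so $u_{n+1}\le 2\PDS\zeta$. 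Third, by monotone convergence, $u_n\uparrow u$ pointwise a.e.\ with $\PDS\zeta\le u\le 2\PDS\zeta$. Continuity of $f$ in the second variable gives $f_{u_n}\to f_u$ a.e., and the integrable dominator $q\Lambda(2\PDS\zeta)$ together with dominated convergence (applied via Fubini using $\GDP(x,\cdot)\,q(\cdot)\Lambda(2\PDS\zeta(\cdot))\in L^1(D)$ for a.e.\ $x$) yields $\GDP f_{u_n}\to \GDP f_u$ a.e. Passing to the limit in the identity $u_{n+1}=\GDP f_{u_n}+\PDS\zeta$ shows $u=\GDP f_u+\PDS\zeta$, i.e.\ $u$ is a weak-dual solution to the semilinear problem by the representation from Corollary~\ref{t:linear problem}.

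For continuity, note that $f_u\le q\Lambda(2\PDS\zeta)$ is in $\LLL$ (as above) and in $L^\infty_{\mathrm{loc}}(D)$, since $q$ is locally bounded by \ref{F} and $\PDS\zeta\in C(D)$ by Lemma~\ref{l:integrability of Poisson potentials}. Proposition~\ref{p:uniform} then gives $\GDP f_u\in C(D)$, and together with $\PDS\zeta\in C(D)$ this yields $u\in C(D)$. The main subtle point I expect is the dominated convergence step: $\PDS\zeta$ is not in $L^\infty$ (it blows up at $\partial D$), so one must really exploit the pointwise bound $\GDP(q\Lambda(2\PDS\zeta))\le\PDS\zeta<\infty$ a.e.\ to produce an integrable majorant inside $\GDP$. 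Everything else is a clean induction plus the standard regularity package developed in Sections~\ref{s:Green}--\ref{s:Poisson harmonic}.
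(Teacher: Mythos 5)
Your proof is correct and follows exactly the monotone-iteration scheme the paper's one-line proof cites: seed at $\PDS\zeta$, iterate $u_{n+1}=\GDP f_{u_n}+\PDS\zeta$, and use the hypothesis $\GDP\big(q\Lambda(2\PDS\zeta)\big)\le\PDS\zeta$ to keep the iterates trapped in $[\PDS\zeta,\,2\PDS\zeta]$. One small simplification worth noting: since $(u_n)$ is increasing and $f$ is non-decreasing in the second variable, $(f_{u_n})$ is itself an increasing sequence, so the passage to the limit inside $\GDP$ follows from monotone convergence alone, which sidesteps the dominated-convergence subtlety you flag at the end.
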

	
	\begin{proof}
		The claim easily follows  by the method of monotone iterations, cf. \cite[Theorem 5.14]{Bio23cpaa}.
	\end{proof}

	For a signed nonlinearity, we also have an existence result in a slightly weaker form.
 	\begin{thm}\label{t:semilinear signed data}
		Let $f:D\times \R\to \R$ satisfy \ref{F} and let $\zeta$ be a finite measure on $\partial D$. Assume that $\GDP q\in C_0(D)$ and $\GDP\big(q\Lambda(2\PDS|\zeta|)\big)\in C_0(D)$. Assume additionally that: (a) $\Lambda$ is sublinearly increasing, i.e. $\lim_{t\to\infty}\Lambda(t)/t=0$, or  (b) $m>0$ is sufficiently small. Then the semilinear problem
		\begin{align}\label{eq:semilinear bogdan}
			\begin{cases}
				\Lo u(x)=m\,f(x,u(x)),&\textrm{in $D$},\\
				\frac{u}{\PDS\sigma}=\zeta,&\textrm{on $\partial D$}.
			\end{cases}
		\end{align}
		has a weak continuous solution $u$ such that $|u|\le C+\PDS|\zeta|$, for some constant $C\ge0$.
		
		If, in addition, $f$ is non-increasing in the second variable, $u$ is a unique weak-dual solution to \eqref{eq:semilinear bogdan}.
	\end{thm}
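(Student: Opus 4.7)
The natural strategy is a Schauder fixed point argument, after transferring the nonhomogeneous boundary data to the nonlinearity. Setting $w = u - \PDS\zeta$, the linearity of the weak-dual formulation together with Corollary \ref{t:linear problem} (applied to $\lambda\equiv 0$) reduces \eqref{eq:semilinear bogdan} to finding a continuous bounded function $w$ satisfying
\[
w(x) = m\,\GDP\!\big(f(\cdot,w(\cdot) + \PDS\zeta(\cdot))\big)(x)\quad \text{in } D.
\]
I would then recover $u = w + \PDS\zeta$, which automatically satisfies $|u|\le \|w\|_\infty + \PDS|\zeta|$, giving the desired bound.

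\textbf{Setting up the fixed point map.} I would work on the Banach space $C_0(D)$ with the sup norm and define
\[
Tw(x) := m\,\GDP\!\big(f(\cdot,w + \PDS\zeta)\big)(x), \qquad \|w\|_\infty\le R.
\]
For $w\in \overline{B_R(0)}$, the monotonicity of $\Lambda$ and the elementary bound $\Lambda(a+b)\le \Lambda(2a)+\Lambda(2b)$ yield
\[
|f(x,w+\PDS\zeta)| \le q(x)\Lambda(R+\PDS|\zeta|(x)) \le \Lambda(2R)\,q(x) + q(x)\Lambda(2\PDS|\zeta|(x)),
\]
so, writing $A := \|\GDP q\|_\infty$ and $B := \|\GDP(q\Lambda(2\PDS|\zeta|))\|_\infty$ (both finite since the two functions lie in $C_0(D)$),
\[
\|Tw\|_\infty \le m\Lambda(2R)A + mB.
\]
In case (a), since $\Lambda(t)/t\to 0$, the inequality $m\Lambda(2R)A + mB \le R$ holds for all sufficiently large $R$; in case (b), fixing any $R>0$ and taking $m \le R/(\Lambda(2R)A+B)$ suffices. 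In either case $T$ sends $\overline{B_R(0)}$ to itself. Moreover, $Tw$ is dominated by the $C_0(D)$-function $m\Lambda(2R)\GDP q + m\GDP(q\Lambda(2\PDS|\zeta|))$, and is continuous by Proposition \ref{p:uniform}, so indeed $Tw\in C_0(D)$.

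\textbf{Compactness and continuity of $T$.} To apply Schauder I need $T$ compact and continuous on $\overline{B_R(0)}$, which I would verify by Arzel\`a-Ascoli. Equi-vanishing at $\partial D$ is immediate from the uniform $C_0(D)$-domination above. For equicontinuity on any $K\subsub D$, the inequality
\[
|Tw(z)-Tw(x)| \le m\int_D |\GDP(z,y)-\GDP(x,y)|\,\big(\Lambda(2R)q(y)+q(y)\Lambda(2\PDS|\zeta|(y))\big)\,dy
\]
is independent of $w\in\overline{B_R(0)}$, and the right-hand side tends to $0$ uniformly for $x,z\in K$ by Proposition \ref{p:uniform}, since the dominating function belongs to $L^{\infty}_{loc}(D)\cap\LLL$ (using continuity of $\PDS|\zeta|$ inside $D$ and local boundedness of $q$). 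Continuity of $T$ follows from pointwise convergence via dominated convergence combined with equicontinuity. Schauder's theorem then delivers a fixed point $w\in\overline{B_R(0)}$, and $u := w+\PDS\zeta \in C(D)$ is the sought weak-dual solution with $|u|\le R+\PDS|\zeta|$. Uniqueness when $f$ is non-increasing in the second variable is a direct consequence of Corollary \ref{c:uniq-noninc} applied to $mf$.

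\textbf{Main obstacle.} The delicate point is the compactness step: I must ensure that the modulus of continuity of $\GDP\tilde f_w$ on compacts of $D$ is controlled uniformly in $w\in\overline{B_R(0)}$. This is achieved by extracting a single $L^{\infty}_{loc}\cap\LLL$ majorant of the family $\{|\tilde f_w|\}$ and invoking the uniform-on-compacts statement in Proposition \ref{p:uniform}. The hypotheses $\GDP q\in C_0(D)$ and $\GDP(q\Lambda(2\PDS|\zeta|))\in C_0(D)$ are used precisely at this step, both to control the sup norm and to guarantee that the image family vanishes uniformly at the boundary.
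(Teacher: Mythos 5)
Your proof is correct and, as far as one can tell, is exactly the argument the paper intends: the paper simply defers to the analogous Theorem 5.16 in the reference \cite{Bio23cpaa}, which is a Schauder fixed-point argument of precisely the shape you wrote (shift by $\PDS\zeta$, estimate via $\Lambda(a+b)\le\Lambda(2a)+\Lambda(2b)$, use the $C_0(D)$ hypotheses to self-map a ball, compactness from the uniform Green-kernel modulus). Your treatment of cases (a) and (b), the use of Proposition \ref{p:uniform} for compactness via a common $L^\infty_{loc}(D)\cap\LLL$ majorant, the verification that the fixed point yields $u=\GDP(mf_u)+\PDS\zeta$ with $|u|\le R+\PDS|\zeta|$, and the appeal to Corollary \ref{c:uniq-noninc} for uniqueness all line up with the intended proof.
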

	\begin{proof}
		The proof follows the calculations as in the proof of \cite[Theorem 5.16]{Bio23cpaa} almost to the letter. The details are left to the reader.
	\end{proof}

    The results that we obtained in the previous theorems also allow us to consider a simple nonlinearity $f(x,t)=\lambda t$, for $\lambda\in \R$. However, at this point we can only extrapolate the existence of a solution for $\lambda\in(-\infty,m)$, where $m>0$ is small enough (for negative $\lambda$ we use Theorem \ref{t:non-positive semilinear problem} and for positive $\lambda$ we use Theorem \ref{t:semilin non-negative monotone linearity}). However, the existence result is known in the interpolated fractional setting for all non-spectral $\lambda$, i.e. for all $\lambda \in \R\setminus\{\psi(\lambda_j):j\in \N\}$, with nonhomogeneous boundary condition. Such problems give rise to the so-called large eigenvalues. Moreover, for each eigenvalue, there exist infinitely many essentially different large eigenfunctions, reflecting a purely nonhomogeneous, non-local phenomenon. For details, we refer to \cite{CGcV-jfa21}. We believe that this result can also be obtained in our setting, but we need some additional properties of improved integrability as those in \cite[Section 3]{CGcV-jfa21}, partly obtained in Appendix \ref{ap:GR-reg}.  We leave such calculations for future work.

	\subsubsection*{Semilinear problems for the interpolated fractional Laplacian}
	In this subsection, we assume that $\Lo=\INTFR$, i.e. we are in the case when $\psi(\lambda)=\lambda^{\alpha/2}$ and $\phi(\lambda)=\lambda^{\beta/2}$, for $\alpha,\beta\in (0,2)$. We will apply our general results from the previous subsection to this special case, for nonlinearities $f$ that are power-like, i.e. where the functions $q$ and $\Lambda$ in \ref{F} are of the form $q(x)=\de(x)^\theta$ and $\Lambda(t)=|t|^p$. The existence and nonexistence results are derived in terms of powers $\theta$, $p$, $\alpha$ and $\beta$.
	
	The first theorem gives a complete existence characterization, when the problem is determined by  a non-positive nonlinearity and non-negative boundary datum $\zeta$, in terms of the critical value for $p$.
	\begin{thm}\label{t:frac-semi-neg}
		Let $f(x,t)=-\de(x)^\theta|t|^p$, for some $\theta\in \R$ and $p>0$, and assume that $\Lo=\INTFR$. If $p<\frac{1+\frac{2\theta}{2+\beta}}{1-\frac{\beta\alpha}{2+\beta}}$, then the problem
		\begin{align*}
			\begin{cases}
				\Lo u(x)=f(x,u(x)),&\textrm{in $D$},\\
				\frac{u}{\PDS\sigma}=\zeta,&\textrm{on $\partial D$},
			\end{cases}
		\end{align*}
		has a non-negative continuous weak-dual solution for every non-negative function $\zeta\in C(D)$. Moreover, the boundary condition holds pointwisely.
		
		If, on the other hand, $p\ge\frac{1+\frac{2\theta}{2+\beta}}{1-\frac{\beta\alpha}{2+\beta}}$ and if $\zeta\in C(\partial D)$, $\zeta\not\equiv 0$, is non-negative, then the weak-dual solution to \eqref{eq:semilinear}, such that the boundary condition holds pointwisely, does not exists.
	\end{thm}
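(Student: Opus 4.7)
The plan is to deduce both parts from the general theory developed in Sections \ref{s:Poisson harmonic}--\ref{s:semilinear}, substituting the explicit sharp expressions for the interpolated fractional Laplacian: Corollary \ref{c:PDS-inter-sharp} gives $\PDS\sigma(x) \asymp \de(x)^{-1-\beta/2+\beta\alpha/2}$, and $V(t) \asymp t^{\beta/2}$. Both halves will hinge on the same algebraic threshold: the $\LLL$-integrability of a power of $\de$ at the boundary matches the critical condition $p = p_c \coloneqq \frac{1+\frac{2\theta}{2+\beta}}{1-\frac{\beta\alpha}{2+\beta}}$.

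For the existence direction, observe that for $\zeta \in C(\partial D)$ non-negative, $\PDS\zeta \le \|\zeta\|_\infty \PDS\sigma \lesssim \de^{-1-\beta/2+\beta\alpha/2}$. Hence
\begin{align*}
    q(x)\,\Lambda(\PDS\zeta(x)) \lesssim \de(x)^{\theta + p(-1-\beta/2+\beta\alpha/2)},
\end{align*}
and weighting by $V(\de) \asymp \de^{\beta/2}$ reduces $q\,\Lambda(\PDS\zeta) \in \LLL$ to integrability of $\de^{\theta + p(-1-\beta/2+\beta\alpha/2) + \beta/2}$ over $D$, which holds precisely when $p < p_c$. Theorem \ref{t:non-positive semilinear problem} then produces a continuous weak-dual solution $u \in C(D) \cap \LLL$. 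Inspecting its proof via Theorem \ref{t:super-subsol} with subsolution $0$ and supersolution $\GDP \wt f_k \le \PDS\zeta$ gives $0 \le u \le \PDS\zeta$, so $|f_u(x)| \lesssim \de(x)^{\theta + p(-1-\beta/2+\beta\alpha/2)}$. The representation $u = \GDP f_u + \PDS\zeta$ then yields the pointwise boundary condition upon applying \eqref{eq:1134b} to the harmonic term (using continuity of $\zeta$) and \eqref{e:1040} to the Green term; the latter is legitimate because the power-type bound on $|f_u|$ trivially satisfies \ref{U1}, \eqref{eq: U3'}, \ref{U4} in the range $p < p_c$.

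For nonexistence when $p \ge p_c$, I argue by contradiction. If $u$ is a weak-dual solution satisfying $u(x)/\PDS\sigma(x) \to \zeta(z)$ at every $z \in \partial D$, continuity and nontriviality of $\zeta$ produce $z_0 \in \partial D$ with $\zeta(z_0) > 0$; the $\varepsilon$-$\delta$ form of the limit supplies $r > 0$ such that $u(x) \ge \tfrac{1}{2}\zeta(z_0)\PDS\sigma(x) \gtrsim \de(x)^{-1-\beta/2+\beta\alpha/2}$ on $B(z_0,r) \cap D$. Consequently $|f_u(x)| = \de(x)^\theta u(x)^p \gtrsim \de(x)^{\theta + p(-1-\beta/2+\beta\alpha/2)}$ there. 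Fix any non-trivial $\xi \in C_c^\infty(D)$ with $\xi \ge 0$; then \eqref{eq:GDP sharp}, applied with $y \in \supp\xi$ bounded away from $\partial D$, yields $\GDP\xi(x) \gtrsim V(\de(x)) \asymp \de(x)^{\beta/2}$ near $\partial D$, so
\begin{align*}
    \int_D \GDP\xi(x)\,|f_u(x)|\,dx \gtrsim \int_{B(z_0, r) \cap D} \de(x)^{\theta + p(-1-\beta/2+\beta\alpha/2) + \beta/2}\,dx = \infty
\end{align*}
once the exponent is $\le -1$, i.e.\ $p \ge p_c$. Since $f_u \le 0$ is sign-definite, this divergence cannot cancel in the weak-dual identity \eqref{eq:problem - integral definition}, whose left-hand side $\int_D u\,\xi\,dx$ is finite from $u \in L^1_{loc}(D)$ and $\xi \in C_c^\infty(D)$. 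Contradiction.

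The main obstacle I anticipate is the two-sided boundary comparability $\GDP\xi \asymp V(\de)$ near $\partial D$ for a chosen compactly supported $\xi \ge 0$: the upper bound is a direct consequence of \eqref{eq:GDP sharp}, but the lower bound requires combining positivity of $\xi$ on a fixed open set with the kernel estimate for $y \in \supp\xi$ separated from $\partial D$, thus ensuring the divergence above cannot be avoided by any choice of test function. Granting this, both implications collapse to the single algebraic threshold $\theta + p(-1-\beta/2+\beta\alpha/2) + \beta/2 > -1$, equivalently $p < p_c$.
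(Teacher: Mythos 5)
Your proof is correct and follows essentially the same approach as the paper. The existence half is identical: you verify $q\Lambda(\PDS\zeta)\in\LLL$ via the same exponent count, invoke Theorem \ref{t:non-positive semilinear problem}, and then deduce the pointwise boundary limit from $0\le u\le\PDS\zeta$ together with Propositions \ref{p:bnd-bhv-PDSzeta} and \ref{p:boundary operator GD lambda}. For the nonexistence half the paper shows $-\GDP f_u(x)=\infty$ for $x$ away from the boundary and concludes via the representation $u=\GDP f_u+\PDS\zeta$; you instead test the weak-dual identity \eqref{eq:problem - integral definition} against a fixed non-trivial $\xi\in C_c^\infty(D)$ and find the first integral on the right-hand side diverges. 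By Fubini these two contradictions are equivalent (your integral equals $\int_D \GDP|f_u|(y)\xi(y)\,dy$), so this is a cosmetic rather than substantive variation. One small imprecision: you cite \ref{U1}, \eqref{eq: U3'}, \ref{U4} when applying \eqref{e:1040}, but Proposition \ref{p:boundary operator GD lambda} is stated with hypotheses \ref{U1}--\ref{U4}, and \ref{U2} fails for a power $t^a$ with $a>0$; this is harmless because when the exponent is positive $|f_u|$ is bounded and one can dominate by $U\equiv 1$, but it is worth noting explicitly since the paper's own proof also glosses over the verification.
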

	\begin{proof}
		By the sharp bounds for $\PDS\sigma$ in this specific case, provided in  Corollary \ref{c:PDS-inter-sharp}, it follows that we can apply Theorem \ref{t:non-positive semilinear problem} if and only if $p<\frac{1+\frac{2\theta}{2+\beta}}{1-\frac{\beta\alpha}{2+\beta}}$. This provides us with a non-negative solution such that $u=\GDP f_u+\PDS\zeta$ and $0\le u\le \PDS\zeta$. In particular, $0\le -\GDP f_u\lesssim -\GDP f_{\PDS\sigma}$ so the pointwise boundary condition $u/\PDS\sigma\to \zeta$ follows by Propositions \ref{p:bnd-bhv-PDSzeta} and \ref{p:boundary operator GD lambda}.
		
		Let us now look at the case when $p\ge\frac{1+\frac{2\theta}{2+\beta}}{1-\frac{\beta\alpha}{2+\beta}}$. Assume that $\zeta\in C(\partial D)$, $\zeta\not\equiv 0$, is non-negative, and that there exists a solution $u$ to \eqref{eq:semilinear} for which the boundary condition holds pointwisely. Then, by Proposition \ref{p:bnd-bhv-PDSzeta}, we have $u(x)\gtrsim \PDS\sigma\asymp \de(x)^{-1-\beta/2+\alpha\beta/2}$, at least near some small but fixed part of $\partial D$ where $\zeta>0$. Denote this set by $D_\Gamma$, so we have, by \eqref{eq:GDP sharp}, for all points $x\in D$ away from the boundary 
		\begin{align*}
			-\GDP(f_u)(x)&=-\int_D \GDP(x,y)f_u(y)dy\gtrsim \int_{D_\Gamma}\GDP(x,y) \de(y)^{\theta+p(-1-\beta/2+\alpha\beta/2)}dy\\
			 &\ge c(\de(x))\int_{D_\Gamma}\de(y)^{\beta/2} \de(y)^{\theta+p(-1-\beta/2+\alpha\beta/2)}dy\\
			 &\asymp c(\de(x))\int_0^1 t^{\theta+p(-1-\beta/2+\alpha\beta/2)+\beta/2}=\infty
		\end{align*}
		since $\theta+p(-1-\beta/2+\alpha\beta/2)+\beta/2\ge -1$,	 which contradicts the definition of the weak-dual solution.
	\end{proof}
	
		In the case of non-negative nonlinearities, we have the following theorem.
	\begin{thm}\label{t:frac-semi-pos}
		Let $f(x,t)=m\delta(x)^\theta|t|^p$, for some $m,p>0$, $\theta\in \R$, and assume $\Lo=\INTFR$. If $p<\frac{1+\frac{2\theta}{2+\beta}}{1-\frac{\beta\alpha}{2+\beta}}$ and $m$ is sufficiently small, then the problem
		\begin{align*}
			\begin{cases}
				\Lo u(x)=f(x,u(x)),&\textrm{in $D$},\\
				\frac{u}{\PDS\sigma}=\zeta,&\textrm{on $\partial D$},
			\end{cases}
		\end{align*}
		has a continuous non-negative solution for all non-negative $\zeta\in C(\partial D)$, and the solution satisfies the boundary condition pointwisely.
		
		On the other hand, if $p\ge\frac{1+\frac{2\theta}{2+\beta}}{1-\frac{\beta\alpha}{2+\beta}}$, then the problem \eqref{eq:semilinear} does not have a solution for any choice of $m>0$ and any non-negative  $\zeta\in C(\partial D)$, $\zeta\not\equiv 0$.
	\end{thm}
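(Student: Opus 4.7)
The argument would follow the template of Theorem \ref{t:frac-semi-neg}, with Theorem \ref{t:semilin non-negative monotone linearity} playing the role previously taken by Theorem \ref{t:non-positive semilinear problem}. Fix a nonnegative $\zeta\in C(\partial D)$ and write the nonlinearity in the form required by \ref{F} with $q(x)=m\,\de(x)^\theta$ and $\Lambda(t)=|t|^p$; one may replace $f$ by $\tilde f(x,t)=q(x)(t^+)^p$ (which is nonnegative and non-decreasing in $t$), since any nonnegative solution coincides with that of the original problem. The central quantity is the exponent
\[
\kappa := \theta + p\Bigl(-1 - \tfrac{\beta}{2} + \tfrac{\alpha\beta}{2}\Bigr),
\]
and an elementary rearrangement (identical to the one in Theorem \ref{t:frac-semi-neg}) shows that the assumption $p<\frac{1+2\theta/(2+\beta)}{1-\alpha\beta/(2+\beta)}$ is equivalent to $\kappa>-1-\beta/2$, placing $\de^\kappa$ inside the scope of Corollary \ref{c:bndry behav of G(de^k)}.

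To apply Theorem \ref{t:semilin non-negative monotone linearity} one needs the pointwise estimate $\GDP(q\Lambda(2\PDS\zeta))\le \PDS\zeta$ in $D$ for $m$ sufficiently small. Using the bound $\PDS\zeta\le \|\zeta\|_\infty \PDS\sigma$ and the sharp estimate $\PDS\sigma\asymp \de^{-1-\beta/2+\alpha\beta/2}$ from Corollary \ref{c:PDS-inter-sharp}, we have $q\Lambda(2\PDS\zeta)\lesssim m(2\|\zeta\|_\infty)^p\,\de^\kappa$; Corollary \ref{c:bndry behav of G(de^k)} then yields $\GDP(\de^\kappa)\lesssim \de^{\min(\beta/2,\,\kappa+\alpha\beta/2)}$ (with a logarithmic correction on the borderline $\kappa=\tfrac{\beta}{2}(1-\alpha)$), and a case-by-case exponent check --- using $1+\beta(1-\alpha/2)>0$ and $1+\beta/2+\kappa>0$ --- shows that $\GDP(\de^\kappa)/\PDS\sigma$ is bounded in $D$ and vanishes at $\partial D$. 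Hence $\GDP(q\Lambda(2\PDS\zeta))\le m\,C(\zeta)\,\PDS\sigma$ in $D$. When $\zeta$ is bounded away from zero on $\partial D$, Proposition \ref{p:bnd-bhv-PDSzeta} delivers $\PDS\zeta\asymp \PDS\sigma$, and the hypothesis closes for all $m\le m_0(\zeta)$; the general nonnegative case is handled by the approximation $\zeta_\varepsilon:=\zeta+\varepsilon$, producing solutions $u_\varepsilon$ via the theorem and passing to the limit $\varepsilon\downarrow 0$ using the monotone dependence on the boundary datum and the equicontinuity provided by Proposition \ref{p:uniform}. The pointwise realisation of the boundary condition on the limit $u=\GDP f_u+\PDS\zeta$ then follows from Propositions \ref{p:bnd-bhv-PDSzeta} and \ref{p:boundary operator GD lambda}, exactly as in Theorem \ref{t:frac-semi-neg}.

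For the nonexistence part, suppose $p\ge \frac{1+2\theta/(2+\beta)}{1-\alpha\beta/(2+\beta)}$ (equivalently $\kappa\le -1-\beta/2$) and that a weak-dual solution $u$ exists with $\zeta\in C(\partial D)$, $\zeta\ge 0$, $\zeta\not\equiv 0$. Since $f\ge 0$ we have $u\ge \PDS\zeta$, and continuity of $\zeta$ together with Proposition \ref{p:bnd-bhv-PDSzeta} provides a region $D_\Gamma\subset D$ near a boundary arc on which $\zeta\ge c>0$ and $\PDS\zeta\gtrsim \PDS\sigma\asymp \de^{-1-\beta/2+\alpha\beta/2}$. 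Consequently $f_u(y)\gtrsim m\,\de(y)^\kappa$ on $D_\Gamma$, and for any nonnegative test $\xi\in C_c^\infty(D)$ the boundary bound $\GDP\xi(x)\gtrsim \de(x)^{\beta/2}$ near $\partial D$ (which follows from \eqref{eq:GDP sharp} applied with $y$ in the fixed compact set $\supp\xi$) gives
\[
\int_D \GDP\xi(x)\, f_u(x)\, dx \;\gtrsim\; m\int_{D_\Gamma}\de(y)^{\beta/2+\kappa}\,dy \;=\; +\infty,
\]
because $\beta/2+\kappa\le -1$, contradicting the defining integral identity of a weak-dual solution. The principal obstacle I anticipate lies in the existence half: the required pointwise domination $\GDP(q\Lambda(2\PDS\zeta))\le \PDS\zeta$ can fail when $\zeta$ vanishes on part of $\partial D$, since there $\PDS\zeta$ may be strictly smaller than $\PDS\sigma$, and it is only the $\zeta+\varepsilon$ approximation --- underpinned by the continuity and regularity machinery of Sections \ref{s:Green} and \ref{s:Poisson harmonic} --- that rescues the direct application of Theorem \ref{t:semilin non-negative monotone linearity}.
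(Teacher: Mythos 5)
Your proposal follows essentially the same route as the paper: existence via Theorem \ref{t:semilin non-negative monotone linearity} combined with $\PDS\zeta\lesssim\PDS\sigma$ and Corollary \ref{c:bndry behav of G(de^k)}; nonexistence via $u\ge\PDS\zeta\asymp\PDS\sigma$ near a boundary arc where $\zeta>0$, which forces $\GDP f_u$ to diverge; and the pointwise boundary condition from Propositions \ref{p:bnd-bhv-PDSzeta} and \ref{p:boundary operator GD lambda} since $f_u\lesssim f_{\PDS\sigma}$. Your exponent bookkeeping ($\kappa>-1-\beta/2$ equivalent to the stated threshold, $\GDP(\de^\kappa)/\PDS\sigma\to 0$ case by case) is correct, and your nonexistence argument is a faithful, slightly more explicit version of the one in the paper.

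The one place where you depart is in handling the domination $\GDP(q\Lambda(2\PDS\zeta))\le\PDS\zeta$ when $\zeta$ vanishes somewhere on $\partial D$. You are right that this is a genuine subtlety: near a zero of $\zeta$ one has only $\PDS\zeta\asymp\de^{\beta/2}$, whereas Corollary \ref{c:bndry behav of G(de^k)} gives $\GDP(\de^\kappa)\asymp\de^{\kappa+\alpha\beta/2}\gg\de^{\beta/2}$ whenever $\kappa<\tfrac{\beta}{2}(1-\alpha)$, so the crude bound $q\Lambda(2\PDS\zeta)\lesssim m\|\zeta\|_\infty^p\de^\kappa$ does not close the domination there. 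The paper's terse phrase ``since $\PDS\zeta\lesssim\PDS\sigma$'' glosses over exactly this case. However, your proposed remedy via $\zeta_\varepsilon=\zeta+\varepsilon$ does not actually repair it: the smallness requirement on $m$ produced by Theorem \ref{t:semilin non-negative monotone linearity} scales roughly like $m\lesssim\inf\zeta_\varepsilon/\|\zeta_\varepsilon\|_\infty^p\asymp\varepsilon$, so the admissible $m$ degenerates as $\varepsilon\downarrow 0$ and you cannot fix a single $m>0$ in advance, nor can monotonicity in $\varepsilon$ deliver a limiting solution for that fixed $m$. To genuinely cover $\zeta$ with interior zeros one needs a sharper argument that keeps $\PDS\zeta$ inside the Green integral --- e.g.\ a weighted 3G-type inequality of the form $\int_D\GDP(x,y)\de(y)^\theta(\PDS\sigma(y))^{p-1}\PDS(y,z)\,dy\lesssim\PDS(x,z)$, which yields $\GDP(q\Lambda(2\PDS\zeta))\le mC\,\PDS\zeta$ with a $\zeta$-independent constant --- rather than the two-step bound through $\PDS\sigma$.
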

	\begin{proof}
		If $p<\frac{1+\frac{2\theta}{2+\beta}}{1-\frac{\beta\alpha}{2+\beta}}$ and $m$ is sufficiently small, then the assumptions of Theorem \ref{t:semilin non-negative monotone linearity} are satisfied since $\PDS \zeta \lesssim \PDS\sigma$, and due to Corollary \ref{c:bndry behav of G(de^k)}, so we obtain a non-negative solution. Since $f_u\lesssim f_{\PDS\sigma}$, the boundary condition holds pointwisely by Propositions \ref{p:bnd-bhv-PDSzeta} and \ref{p:boundary operator GD lambda}.
		
		On the other hand, assume that $p\ge\frac{1+\frac{2\theta}{2+\beta}}{1-\frac{\beta\alpha}{2+\beta}}$ and that $\zeta\in C(\partial D)$, $\zeta\not\equiv 0$, is non-negative. Assume that $u$ is a solution to \ref{eq:semilinear}. Since $f(x,t)\ge 0$, $u\ge \PDS\zeta$, so  $u(x)\gtrsim \de(x)^{-1-\beta/2+\alpha\beta/2}$ near a part of $\partial D$ where $\zeta>0$, so we can repeat the reasoning from the end of the proof of Theorem \ref{t:frac-semi-neg}, to see that $\GDP f_u \gtrsim \GDP \de^{\theta+p(-1-\beta/2+\alpha\beta/2)}=\infty$, which contradicts the definition of the weak-dual solution.
	\end{proof}

	For a signed nonlinearity we give a partial answer, with a narrower range for the power parameters than in the previous two results.
	\begin{thm}\label{t:frac-semi-sign}
		Let $f(x,t):D\times \R\to \R$ such that $|f(x,t)|\le m\delta(x)^\theta|t|^p$, for some $m,p>0$ and $\theta> -\frac{\alpha\beta}{2}$, and assume  $\Lo=\INTFR$. If 
		\begin{itemize}
			\item $1\le p<\frac{\frac{2\theta+\alpha\beta}{2+\beta}}{1-\frac{\beta\alpha}{2+\beta}}$ and $m$ is sufficiently small, or
			\item $p<1$,
		\end{itemize}
		then the problem
		\begin{align*}
			\begin{cases}
				\Lo u(x)=mf(x,u(x)),&\textrm{in $D$},\\
				\frac{u}{\PDS\sigma}=\zeta,&\textrm{on $\partial D$},
			\end{cases}
		\end{align*}
		has a  continuous weak-dual solution for every  $\zeta\in C(\partial D)$, and the solution satisfies the boundary condition in the pointwise sense.
	\end{thm}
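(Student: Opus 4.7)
The plan is to apply Theorem \ref{t:semilinear signed data} with $q(x)=\delta(x)^\theta$ and $\Lambda(t)=|t|^p$, so that $|mf(x,t)|\le m\,q(x)\Lambda(|t|)$. This requires verifying two integrability conditions, $\GDP q\in C_0(D)$ and $\GDP(q\Lambda(2\PDS|\zeta|))\in C_0(D)$, followed by invoking either the sublinearity alternative (a) when $p<1$, or the smallness alternative (b) when $p\ge1$. The pointwise boundary condition will then be deduced from Propositions \ref{p:bnd-bhv-PDSzeta} and \ref{p:boundary operator GD lambda} applied to the representation $u=\GDP(mf_u)+\PDS\zeta$.

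For the integrability step, Corollary \ref{c:PDS-inter-sharp} gives $\PDS|\zeta|(x)\le\|\zeta\|_{L^\infty(\partial D)}\PDS\sigma(x)\asymp \delta(x)^{-1-\beta/2+\alpha\beta/2}$, so
\begin{align*}
q(x)\Lambda(2\PDS|\zeta|(x))\lesssim \delta(x)^{\theta+p(-1-\beta/2+\alpha\beta/2)}.
\end{align*}
A brief case analysis of Corollary \ref{c:bndry behav of G(de^k)} shows that $\GDP(\delta^\kappa)$ is continuous in $D$ and vanishes at $\partial D$ precisely when $\kappa>-\alpha\beta/2$ (in each of the three regimes, the sharp bound goes to $0$ as $\delta\to0$ under this condition). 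Applied to $\kappa=\theta$, this is exactly the hypothesis $\theta>-\alpha\beta/2$, giving $\GDP q\in C_0(D)$. Applied to $\kappa=\theta+p(-1-\beta/2+\alpha\beta/2)$, it produces the required inequality
\begin{align*}
\theta+p(-1-\beta/2+\alpha\beta/2)>-\tfrac{\alpha\beta}{2}\quad\Longleftrightarrow\quad p<\frac{\frac{2\theta+\alpha\beta}{2+\beta}}{1-\frac{\beta\alpha}{2+\beta}},
\end{align*}
which is precisely the exponent range in the first bullet. In the $p<1$ case the same computation applies, and since $\Lambda$ is then sublinearly increasing, no smallness of $m$ is needed; in the $p\ge1$ case, Theorem \ref{t:semilinear signed data}(b) provides the solution once $m$ is taken small enough.

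For the pointwise boundary condition, write $u=\GDP(mf_u)+\PDS\zeta$ with $|u|\le C+\PDS|\zeta|$. Since $\zeta\in C(\partial D)$, the uniform limit $\PDS\zeta/\PDS\sigma\to\zeta$ on $\partial D$ follows from Proposition \ref{p:bnd-bhv-PDSzeta}. For the Green term, the bound $|f_u(x)|\lesssim \delta(x)^{\theta+p(-1-\beta/2+\alpha\beta/2)}$ near $\partial D$ (combined with local boundedness in the interior) dominates $f_u$ by a function of the form $U(\delta(x))$ satisfying \ref{U1}--\ref{U4}: if the exponent is negative one takes a truncated power, otherwise $f_u$ is bounded and $U$ can be taken constant. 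Proposition \ref{p:boundary operator GD lambda} then yields $\GDP(f_u)/\PDS\sigma\to0$ pointwise at $\partial D$, completing the pointwise boundary identification. The only delicate point in the plan is checking that the exponent computation matches the stated critical threshold in both bullets, but this reduces to the single algebraic manipulation above.
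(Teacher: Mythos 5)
Your proposal is correct and follows essentially the same route as the paper's proof: verify the two $C_0(D)$ hypotheses of Theorem \ref{t:semilinear signed data} via the sharp bound $\PDS|\zeta|\lesssim\PDS\sigma$ from Corollary \ref{c:PDS-inter-sharp} and the $C_0$ criterion $\kappa>-\alpha\beta/2$ from Corollary \ref{c:bndry behav of G(de^k)}, and then deduce the pointwise boundary behaviour from the representation $u=\GDP(mf_u)+\PDS\zeta$ with $|u|\le C+\PDS|\zeta|$ together with Propositions \ref{p:bnd-bhv-PDSzeta} and \ref{p:boundary operator GD lambda}. The paper's proof leaves the exponent arithmetic implicit; you spell it out and correctly reduce $\theta+p(-1-\beta/2+\alpha\beta/2)>-\alpha\beta/2$ to the stated threshold $p<\frac{\frac{2\theta+\alpha\beta}{2+\beta}}{1-\frac{\beta\alpha}{2+\beta}}$. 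You also correctly observe that the two $C_0$ conditions subsume the corollary's integrability requirement $\kappa>-1-\beta/2$, since $-\alpha\beta/2>-1-\beta/2$.

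One remark worth flagging, though it is inherited from the paper rather than introduced by you: in the second bullet ($p<1$), Theorem \ref{t:semilinear signed data} still demands $\GDP\bigl(q\Lambda(2\PDS|\zeta|)\bigr)\in C_0(D)$, which by the same arithmetic forces $p$ below the critical threshold; this is not automatic from $p<1$ and $\theta>-\alpha\beta/2$ alone (when $\theta$ is close to $-\alpha\beta/2$ the critical value can drop below $1$). Your phrase ``the same computation applies'' implicitly reintroduces the threshold constraint for $p<1$, which is in fact what the hypotheses of Theorem \ref{t:semilinear signed data} require. The paper's own proof has exactly the same implicit restriction, so this is a feature of the result's statement rather than a flaw in your argument.
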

	\begin{proof}
		By Corollary \ref{c:bndry behav of G(de^k)}, $\GDP \de^\kappa \in C_0(D)$ if and only if $\kappa > -\frac{\alpha\beta}{2}$. Hence, $\GDP q\in C_0(D)$. Recall also that $\PDS|\zeta|\lesssim \PDS\sigma$ since $\zeta \in C(\partial D)$, so if $p<\frac{\frac{2\theta+\alpha\beta}{2+\beta}}{1-\frac{\beta\alpha}{2+\beta}}$, then $\GDP f(\PDS |\zeta|)\in C_0(D)$ and in that case we can apply Theorem \ref{t:semilinear signed data}. The obtained solution satisfies $|u|\le C+\PDS|\zeta|$, so the pointwise boundary condition follows from Propositions \ref{p:bnd-bhv-PDSzeta} and \ref{p:boundary operator GD lambda}.
	\end{proof}

\appendix
    \section{Appendix}
    \subsection{Joint continuity of the kernels}
	\begin{lem}\label{ap:l:joint p_D}
            The functions $p_D(t,x,y)$ is symmetric\footnote{$p_D(t,x,y)=p_D(t,y,x)$ for all $x,y\in \R^d$ and $t>0$.} and jointly continuous in $(0,\infty)\times \R^d\times  \R^d$.
            Furthermore, $p_D(t,x,y)/V(\de(y))$ is jointly continuous in $(0,\infty)\times \overline D\times  \overline D$, where the boundary values are extended by
            \begin{align*}
                \partial_V p_D(t,x,z)\coloneqq\lim_{D\ni y\to z}p_D(t,x,y)/V(\de(y)), \ z\in \partial D.
            \end{align*}
    \end{lem}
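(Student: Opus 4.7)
The proof has three parts, all driven by the Mercer-type spectral expansion of $p_D$: (i) joint continuity of $p_D$ on $(0,\infty)\times\R^d\times\R^d$; (ii) symmetry; and (iii) continuity of the ratio $p_D(t,x,y)/V(\de(y))$ up to $\overline{D}\times\overline{D}$.

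\textbf{Step 1 (spectral representation and joint continuity of $p_D$).} Since $\lambda_j\asymp \phi(j^{2/d})\gtrsim j^{2\ud{\phi}/d}$ by \ref{as:WSC}, we have $\sum_j e^{-\lambda_j t}<\infty$ for every $t>0$, so $P^D_t$ is trace class and the kernel admits the $L^2(D\times D)$-expansion
\[
p_D(t,x,y) \;=\; \sum_{j=1}^{\infty} e^{-\lambda_j t}\,\varphi_j(x)\,\varphi_j(y).
\]
Using \eqref{eq:eigen no0}, $\|\varphi_j\|_{L^\infty(D)}\le C_1\lambda_j^{k-1}$, so after extending each $\varphi_j$ by zero to $\R^d$ one has, for every $t_0>0$,
\[
\sup_{t\ge t_0,\ (x,y)\in\R^d\times\R^d}\Big|\sum_{j\ge N}e^{-\lambda_j t}\varphi_j(x)\varphi_j(y)\Big| \;\le\; C\sum_{j\ge N}e^{-\lambda_j t_0}\lambda_j^{2k-2}\;\xrightarrow{N\to\infty}\;0.
\]
Each partial sum is continuous on $(0,\infty)\times\R^d\times\R^d$, so its uniform limit is also continuous, and by uniqueness of the $L^2$-representation it coincides with $p_D$. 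This establishes joint continuity of $p_D$ on $(0,\infty)\times\R^d\times\R^d$.

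\textbf{Step 2 (symmetry).} Symmetry is manifest from the symmetric form of each summand $e^{-\lambda_j t}\varphi_j(x)\varphi_j(y)$; alternatively, it follows from self-adjointness of $P^D_t$ on $L^2(D)$ inherited from symmetry of $X$, combined with the joint continuity proved in Step 1.

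\textbf{Step 3 (continuity of the ratio up to the boundary).} By \eqref{eq:eigen no2} of Theorem \ref{t:eigenfuncions}, each $\varphi_j/V(\de)$ belongs to $C^{\varepsilon}(D)$ with $\|\varphi_j/V(\de)\|_{L^\infty(D)}\le C_3\lambda_j^k$, hence extends continuously to $\overline D$. Dividing the spectral series by $V(\de(y))$ pointwise in $y\in D$ gives
\[
\frac{p_D(t,x,y)}{V(\de(y))} \;=\; \sum_{j=1}^{\infty} e^{-\lambda_j t}\,\varphi_j(x)\,\frac{\varphi_j(y)}{V(\de(y))}.
\]
Combining \eqref{eq:eigen no0} and \eqref{eq:eigen no2}, the $j$-th term is dominated on $[t_0,\infty)\times\overline D\times\overline D$ by $C\,e^{-\lambda_j t_0}\lambda_j^{2k-1}$, and the corresponding series converges by the same growth/decay argument as in Step 1. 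Thus the series converges uniformly on $[t_0,\infty)\times\overline D\times\overline D$; each term is jointly continuous there, so the limit is too. In particular, the boundary limit $\partial_V p_D(t,x,z)=\lim_{D\ni y\to z}p_D(t,x,y)/V(\de(y))$ exists and is jointly continuous.

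The main obstacle is upgrading the $L^2$-spectral expansion to uniform convergence up to the boundary. The sup bound \eqref{eq:eigen no0} alone suffices for Step 1, but Step 3 truly requires the boundary-regularity estimate \eqref{eq:eigen no2} (established via the appendix material on $\Lbase$): without it, the ratio $\varphi_j/V(\de)$ would not extend continuously to $\partial D$ and the boundary limit would not be accessible through this direct termwise argument.
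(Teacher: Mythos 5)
Your proof is correct, but your route for the first two assertions differs from the paper's. For joint continuity of $p_D$ and for symmetry, the paper argues probabilistically: $p(t,x,y)$ is jointly continuous from the subordination representation \eqref{eq:SBM dens}, Hunt's formula \eqref{eq:p_D Hunt} transfers this to $p_D$ on $D\times D$, and the classical arguments in Chung--Zhao together with regularity of $\partial D$ give symmetry and continuous vanishing at the boundary, hence continuity on $\overline D\times\overline D$. You instead deduce all of this from the Mercer-type spectral series $\sum_j e^{-\lambda_j t}\varphi_j(x)\varphi_j(y)$, using the eigenvalue growth $\lambda_j\asymp\phi(j^{2/d})$ and the sup-norm bound \eqref{eq:eigen no0} to obtain uniform convergence on $[t_0,\infty)\times\R^d\times\R^d$; symmetry then becomes manifest. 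Your approach is more unified (everything reduces to Theorem \ref{t:eigenfuncions}), while the paper's is independent of the spectral machinery for the first half, only invoking the eigenfunction estimates for the ratio $p_D/V(\de(\cdot))$. For the ratio (Step 3), your argument coincides with the paper's: divide the spectral series by $V(\de(y))$ and apply \eqref{eq:eigen no2}. Two small points you should make explicit. First, for the zero-extended partial sums to be continuous on all of $\R^d\times\R^d$ you need $\varphi_j\in C_0(D)$; this is part of Theorem \ref{t:eigenfuncions}, but you only cite the sup-norm bound. Second, "by uniqueness of the $L^2$-representation it coincides with $p_D$" literally yields equality a.e.; to claim the specific function $p_D$ defined via Hunt's formula is continuous you should either note that Hunt's formula already makes $p_D$ continuous (the paper's route), or invoke the semigroup identity $p_D(t,x,y)=\int_D p_D(t/2,x,z)p_D(t/2,z,y)\,dz$ to upgrade the $L^2$ Mercer expansion to a pointwise one.
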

    \begin{proof}
    First we note that by assumption \ref{as:WSC} and the representation \eqref{eq:SBM dens}, it follows that $p(t,x,y)$ is jointly continuous in $(0,\infty)\times \R^d\times \R^d$. By Hunt's formula \eqref{eq:p_D Hunt}, the same holds for $p_D(t,x,y)$ in $(0,\infty)\times D\times D$. However, by the standard calculations as in \cite[Section 2]{chung_zhao}, we get that $p_D(t,x,y)=p_D(t,y,x)$ for all $(0,\infty)\times \R^d\times  \R^d$, and that $p_D$ continuously vanishes on $\partial D$ since $D$ is regular for the process $X$ since $D$ is $C^{1,1}$ regular. Hence, $p_D$ is jointly continuous in $(0,\infty)\times \overline D\times  \overline D$.
        
    By the spectral decomposition of $\Lbase$, we have $p_D(t,x,y)=\sum_{j=1}^{\infty} e^{-\lambda_j t}\varphi_j(x)\varphi_j(y),$
        for almost all $(t,x,y)\in (0,\infty)\times \overline D\times  \overline D$. The eigenfunctions $\varphi_j$, $j\in \N$, are also regular in the sense of Theorem \ref{t:eigenfuncions}, so this spectral representation holds everywhere. 
        Specifically, by \eqref{eq:eigen no2}, $\varphi_j /V(\de)$ are H\"older regular up to the boundary, so they can be continuously extended to the boundary. By this regularity, the corresponding bounds for the H\"older norms, and by the spectral representation, we get that $p_D(t,x,y)/V(\de(y))$ is also jointly continuous in $(0,\infty)\times \overline D\times  \overline D$, and in particular $\partial_V p_D(t,x,z)$ is jointly continuous in $(0,\infty)\times \overline D\times  \partial D$.
    \end{proof}
    \begin{rem}\label{r:Bio21jmaa}
        In this paper we rely on recent potential-theoretic developments related to the killed subordinate Brownian motion $X^D$ from \cite{Bio21jmaa}. Throughout that paper, it is assumed that the process $X$ is transient, see \cite[Subsection 2.4]{Bio21jmaa}. This is because the results were obtained also for unbounded domains, and to assure that the Green function of $X$ exists and is appropriately bounded: $G_{\R^d}(x,y)\le c(d,R)|x-y|^{-d}\phi(|x-y|^{-2})^{-1}$, for $|x-y|<R$. For bounded domains, this assumption was used only to bound $G_D(x,y)$ by $|x-y|^{-d}\phi(|x-y|^{-2})^{-1}$ from above, see \cite[p.9, and Eqs. (38) and (39)]{Bio21jmaa}. However, in the meantime, such a bound for $G_D$ was obtained without the transience assumption, see \cite[Lemma 2.1]{ksv2020boundary}. Thus, we may use \cite{Bio21jmaa} on bounded domains $D$ without assuming the transience of $X$.
    \end{rem}

    \begin{lem}\label{l:GDP cont}
        The Green functions  $G_D$ and $\GDP$ are  jointly continuous in $D\times D \setminus \{(x,x):x\in D\}$ and on the diagonal it holds that $\lim_{(z,w)\to(x,x)}\GD(z,w)=\lim_{(z,w)\to(x,x)}\GDP(z,w)=\infty$, $x\in D$,
        i.e. they are continuous in $D\times D$ in the extended sense.
    \end{lem}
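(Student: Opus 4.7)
The plan is to reduce both assertions to the integral representations
\begin{align*}
\GD(x,y) = \int_0^\infty p_D(t,x,y)\,dt, \qquad \GDP(x,y) = \int_0^\infty p_D(t,x,y)\,\uu(t)\,dt,
\end{align*}
combined with the joint continuity of $p_D$ on $(0,\infty)\times D\times D$ from Lemma \ref{ap:l:joint p_D}. Throughout I would use that $\uu$ is locally integrable and, being the potential density of a special Bernstein function, non-increasing on $(0,\infty)$.

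For continuity off the diagonal, I fix $(x_0,y_0)\in D\times D$ with $x_0\ne y_0$ and choose disjoint compact neighborhoods $K_1\ni x_0$ and $K_2\ni y_0$ in $D$, setting $c_0\coloneqq \dist(K_1,K_2)>0$. For $(z,w)\in K_1\times K_2$ and any fixed $T>0$, I construct a dominating function for $p_D(t,z,w)\uu(t)$ as follows. On $(0,T)$, the bound $p_D\le p$ together with the tail estimate \eqref{eq:den-tails bnd} and $|z-w|\ge c_0$ gives $p_D(t,z,w)\lesssim t$ with a constant depending only on $\phi$, $c_0$, $d$; hence the integrand is dominated by $C\,t\,\uu(t)\le C\,\uu(t)$ on $(0,T)$, which is integrable. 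On $[T,\infty)$, the large-time estimate \eqref{eq:heat-sharp-big} gives $p_D(t,z,w)\lesssim e^{-\lambda_1 t}$ uniformly on $K_1\times K_2$, and $\uu(t)\le\uu(T)$ by monotonicity, so $p_D(t,z,w)\uu(t)\lesssim e^{-\lambda_1 t}$ is integrable. The dominated convergence theorem, together with the pointwise joint continuity of $p_D(t,\cdot,\cdot)$ for each $t$, yields continuity of $\GDP$ at $(x_0,y_0)$. The same argument with $\uu\equiv 1$ handles $\GD$.

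For the diagonal blow-up, I fix $x_0\in D$ and a sequence $(z_n,w_n)\to(x_0,x_0)$. By nonnegativity of $p_D$ and its joint continuity at $(t,x_0,x_0)$ for each $t>0$, Fatou's lemma gives
\begin{align*}
\liminf_{n\to\infty}\GDP(z_n,w_n)\ge \int_0^\infty p_D(t,x_0,x_0)\,\uu(t)\,dt,
\end{align*}
and analogously for $\GD$. To show the right-hand side is $+\infty$, I apply \eqref{eq:heat-sharp-small} at $x=y=x_0$: since $\de(x_0)>0$, for $t$ sufficiently small both $V(\de(x_0))/\sqrt{t}\wedge 1$ factors equal $1$, and the final factor reduces to $\phi^{-1}(1/t)^{d/2}$, yielding $p_D(t,x_0,x_0)\gtrsim \phi^{-1}(1/t)^{d/2}$ near $t=0$. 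The upper weak scaling of $\phi$ in \ref{as:WSC} implies $\phi^{-1}(s)\gtrsim s^{1/\od{\phi}}$ for large $s$, so $\phi^{-1}(1/t)^{d/2}\gtrsim t^{-d/(2\od{\phi})}$. Since $\od{\phi}<1\le d/2$, the exponent $d/(2\od{\phi})>1$, making this lower bound non-integrable near $0$. As $\uu\ge\uu(1)>0$ on $(0,1]$ by monotonicity, both $\GD(x_0,x_0)$ and $\GDP(x_0,x_0)$ equal $+\infty$, giving the claim.

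The main obstacle is purely bookkeeping: ensuring the dominating function in the off-diagonal case is uniform in $(z,w)$ over a neighborhood. This is resolved by passing from the pointwise separation $|x_0-y_0|>0$ to the uniform separation $\dist(K_1,K_2)\ge c_0$ via the choice of $K_1$ and $K_2$; the remainder is a routine combination of dominated and Fatou convergence with the heat kernel bounds already available in the paper.
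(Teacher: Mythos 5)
Your proof is correct. The off-diagonal part is essentially the same as the paper's: both invoke the joint continuity of $p_D$ (Lemma \ref{ap:l:joint p_D}) and the dominated convergence theorem, and your explicit construction of the dominating function via \eqref{eq:den-tails bnd} for small $t$ and \eqref{eq:heat-sharp-big} for large $t$ is precisely the bookkeeping the paper leaves implicit.

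For the diagonal blow-up your route is genuinely different. The paper treats $\GDP$ and $G_D$ separately: for $\GDP$ it reads the blow-up off the two-sided sharp estimate \eqref{eq:GDP sharp} cited from \cite[Theorem 6.4]{ksv2020boundary}, while for $G_D$ — since $X$ need not be transient and no global sharp Green function estimate is available — it resorts to domain monotonicity $G_D(x,y)\ge G_{B(x_0,\de(x_0)/2)}(x,y)$ and a lower bound for the Green function of a ball from \cite[p.\,137]{ksv2020boundary}. You instead apply Fatou's lemma to the representations $\GD(x,y)=\int_0^\infty p_D\,dt$ and $\GDP(x,y)=\int_0^\infty p_D\,\uu\,dt$, reducing both claims to $\int_0^\varepsilon p_D(t,x_0,x_0)\,dt=\infty$ (and the analogous statement with the weight $\uu$, which is harmless since $\uu$ is non-increasing and strictly positive). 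The small-time lower bound $p_D(t,x_0,x_0)\gtrsim\phi^{-1}(1/t)^{d/2}\gtrsim t^{-d/(2\od{\phi})}$ with exponent $d/(2\od{\phi})>1$ (as $d\ge 2$ and $\od{\phi}<1$) then finishes the job. This is a cleaner and more unified argument: it treats $G_D$ and $\GDP$ identically, avoids any appeal to Green function estimates, and in particular sidesteps the transience issue that forced the paper into the domain-monotonicity detour for $G_D$. The paper's approach has the minor advantage of citing an already-quoted estimate for $\GDP$, but your version is more self-contained given the heat kernel bounds \eqref{eq:heat-sharp-small}.
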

    \begin{proof}
        First, we deal with $\GDP$. Recall that $p_D$ is jointly continuous in $(0,\infty)\times D\times D$ and that the sharp bounds \eqref{eq:heat-sharp-small} and \eqref{eq:heat-sharp-big} hold. Additionally, $\uu$ is integrable on $(0,1)$ and bounded on $[1,\infty)$, so we may use the dominated convergence theorem in the representation $\GDP(x,y)=\int_0^\infty p_D(t,x,y)\uu(t)dt$ to get joint continuity for $x,y\in D$, $x\ne y$. Thus, the claim follows from the sharp estimate \eqref{eq:GDP sharp}.

        Analogously, from the continuity of $p_D$ and the bounds \eqref{eq:heat-sharp-small} and \eqref{eq:heat-sharp-big}, we easily get continuity of $G_D$ in $x\ne y$. For $x=y$, sharp bounds for $G_D$ have not been proved, since $X$ is not necessarily transient. However, the claim follows from \cite[last display on p. 137]{ksv2020boundary}, since for $(x,y)$ close enough to $(x_0,x_0)$
        $$G_D(x,y)\ge G_{B(x_0,\de(x_0)/2)}(x,y)\ge c(d,\de(x_0),D,\phi)\frac{1}{|x-y|^dV(|x-y|)^{-2}}.$$ 
    \end{proof}
  
\subsection{Regularity of eigenfunctions of $\Lbase$}\label{ap:GR-reg}
\begin{proof}[Proof of Theorem \ref{t:eigenfuncions}]

		By general semigroup theory, the potential operator $G_D$ is the inverse operator of $\Lbase$ on $L^2(D)$, see e.g. \cite{sato_PO,yosida_FA}, so we have $G_D\varphi_j=\frac{1}{\lambda_j}\varphi_j$, $j\in\N$, a.e. in $D$.	Further, by the Green function estimate
		\eqref{eq:GD sharp bound SBM} and by \ref{as:WSC} we have $G_D(x,y)\lesssim
		|x-y|^{-d+2\ud{\phi}}$, $x,y\in D$. Hence, for $g$ supported in $D$ we have
		\begin{align*}
			|G_Dg(x)|\le \int_D G_D (x,y)|g(y)|dy\lesssim
			\int_{\R^d}\frac{|g(y)|}{|x-y|^{d-2\ud{\phi}}}dy\eqqcolon U_{\ud{\phi}}|g|(x),
		\end{align*}
		where $U_{\ud{\phi}}$ denotes the Green function of the $\ud{\phi}$-fractional Laplacian. Thus, we can repeat the procedure as in \cite[Proposition
		1.2]{Fernandez-RealRos-Oton2014}:  for $g\in L^2(D)$ we have
		\begin{itemize}
			\item if $1<p<\frac{n}{2\ud{\phi}}$, then
			 $\| G_D g \|_{L^q(D)}\le C(d,p,\ud{\phi}) \|g\|_{L^p(D)}$, where $q=\frac{np}{np-2p\ud{\phi}}$, 
			\item 
			if $p=\frac{n}{2\ud{\phi}}$, then
			$\| G_D g \|_{L^q(D)}\le C(d,p,\ud{\phi}) \|g\|_{L^p(D)}$, where $q<\infty,$
			\item 
			
			if $p>\frac{n}{2\ud{\phi}}$, then
			$ \| G_D g \|_{L^\infty(D)}\le C(d,p,\ud{\phi}) \|g\|_{L^p(D)}. $
		\end{itemize}
		Since $\varphi_j\in L^2(D)$, by bootstrapping the previous bounds, we obtain that $\varphi_j$ is bounded and the bound \eqref{eq:eigen no0} holds.
		
		Next, since $D$ is $C^{1,1}$ domain, all the points on $\partial D$ are regular for $X$, so by applying \cite[Lemma 2.2 \& Lemma 2.4]{BVW-na21} we get that $\varphi_j\in C_0(D)$. The
		inequalities \eqref{eq:eigen no1} and \eqref{eq:eigen no2} now follow by
		\cite[Theorem 1.1 \& Theorem 1.2]{KKLL-jfa19}.
\end{proof}
  
 \subsection{A note on the domain of killed operators}\label{ap:domain}
 
 	\begin{lem}\label{ap:l:generator domain}
		 The space $C_c^2(D)$ is contained in the $L^2(D)$-domain, but not in the $C_0(D)$-domain, of $\Lbase$. Moreover, for $f\in C_c^2(D)$ it holds that
		\begin{align*}
			\Lbase f=L f,\quad \textit{in $D$}.
		\end{align*}
	\end{lem}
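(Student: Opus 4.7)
The strategy is to identify $\Lbase f$ on $C_c^2(D)$ with the full-space generator $Lf$ via a semigroup decomposition, and then exploit the non-local nature of $L$ to rule out the $C_0(D)$-inclusion. Extend any $f\in C_c^2(D)$ by zero to $\R^d$; then $f\in C_c^2(\R^d)$ lies in the $C_0(\R^d)$-domain of $L$, with $Lf\in C_0(\R^d)$ given by the principal value formula \eqref{eq:L-defRd}. This is standard Lévy-process theory for pure-jump generators.

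For the $L^2(D)$-domain inclusion, write, for $x\in D$,
\[
\frac{P_t^D f(x)-f(x)}{t} \;=\; \frac{P_t f(x)-f(x)}{t} \;-\; \frac{1}{t}\,\ex_x\!\left[f(X_t);\,t\ge\tau_D\right].
\]
The first term converges to $Lf$ uniformly on $\R^d$, hence in $L^2(D)$ since $D$ is bounded. For the second term, let $K=\supp f$ and $r_0=\dist(K,D^c)>0$. On $\{t\ge\tau_D\}$ one has $X_{\tau_D}\in D^c$, so by the strong Markov property at $\tau_D$ the term equals $\ex_x[P_{t-\tau_D}f(X_{\tau_D});\,t\ge\tau_D]$. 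Using \eqref{eq:den-tails bnd} and monotonicity of $r\mapsto\phi(r^{-2})/r^d$, for $y\in D^c$ and $s>0$,
\[
P_s f(y) \;\le\; \|f\|_\infty\int_{K} p(s,y,z)\,dz \;\le\; \|f\|_\infty\,C(d)\,s\int_K \frac{\phi(|y-z|^{-2})}{|y-z|^d}\,dz \;\le\; C\,s,
\]
with $C$ depending only on $d$, $\phi$, $f$ and $D$. Hence $\ex_x[f(X_t);\,t\ge\tau_D]\le Ct\,\p_x(\tau_D\le t)$, so the second term is dominated in absolute value by $C\,\p_x(\tau_D\le t)$, which tends to $0$ in $L^2(D)$ by bounded convergence since $\p_x(\tau_D>0)=1$ for every $x\in D$. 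Combining the two estimates yields $f\in\DD_{L^2}(\Lbase)$ with $\Lbase f = Lf$ in $D$.

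For the second assertion, take any non-trivial non-negative $f\in C_c^2(D)$ and suppose for contradiction that $f$ belongs to the $C_0(D)$-domain of $\Lbase$. Then $(P_t^D f - f)/t$ converges uniformly on $D$ to some $\Lbase f\in C_0(D)$; uniform convergence implies $L^2(D)$ convergence, so $\Lbase f = Lf|_D$ a.e., and by continuity of $Lf$ on $\R^d$, the equality holds pointwise in $D$. However, for $z\in\partial D$ the function $f$ vanishes in a neighborhood of $z$, so no principal value is needed and
\[
\lim_{D\ni x\to z} Lf(x) \;=\; Lf(z) \;=\; \int_D f(y)\,j_\phi(|z-y|)\,dy \;>\; 0,
\]
contradicting $\Lbase f\in C_0(D)$.

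The main technical obstacle is the uniform bound $P_s f(y)\le Cs$ for $y\in D^c$, which is where the separation $\dist(\supp f,D^c)>0$ and the decay of $\phi(r^{-2})/r^d$ both play a role; everything else is a matter of packaging the semigroup decomposition and invoking standard continuity properties of $Lf$.
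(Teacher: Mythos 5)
Your proposal is correct and follows essentially the same route as the paper: the same semigroup decomposition $P_t^D f = P_t f - \ex_\cdot[f(X_t);t\ge\tau_D]$, uniform convergence of the full-space difference quotient, and the observation that $Lf$ cannot vanish at $\partial D$ for sign-definite nontrivial $f$. The only cosmetic difference is that you bound the boundary term via the heat-kernel estimate \eqref{eq:den-tails bnd} directly, while the paper re-uses the uniform bound $\|(P_s f - f)/s\|_\infty\lesssim \|Lf\|_\infty$ together with $f\equiv 0$ on $D^c$; both exploit $\dist(\supp f, D^c)>0$ to get $\ex_x[f(X_t);t\ge\tau_D]\lesssim t\,\p_x(\tau_D\le t)$.
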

	\begin{proof}
		Take $f\in C_c^2(D)$. First we show that 
		\begin{align}\label{eq:killed semigr conv}
			\lim_{t\to0}\frac{P^D_tf(x)-f(x)}{t}=L f(x),\quad x\in D,
		\end{align}
		which immediately implies that $L f$ is the only candidate for $-\Lbase f$ both in $L^2(D)$ and in $C_0(D)$. To this end, write
		\begin{align}\label{eq:semi decomp}
			\frac{P^D_tf(x)-f(x)}{t}=\frac{P_tf(x)-f(x)}{t}-\frac{\wt P^D_tf(x)}{t},
		\end{align}
		where
		\begin{align*}
			\wt P^D_tf(x)&\coloneqq \int_D \ex_x\left[p(t-T_D,X_{T_D},y);t>T_D\right]f(y)dy=\ex_x\left[P_{t-T_D}f(X_{T_D});t>T_D\right].
		\end{align*}
		The first term on the right-hand side in \eqref{eq:semi decomp} converges to $L f(x)$ uniformly in $\R^d$ since $C_c^2(\R^d)\subset\DD(L)$, and 
		$\left\|\frac{P_tf-f}{t}\right\|_{L^\infty(D)}\le C \|L f\|_{L^\infty(\R^d)}<\infty,$, $t<1$, see e.g. \cite[Eq. (13.3)]{bernstein}.
		
		For the second term, since $f\equiv 0$ on $D^c$ and by using the same arguments as above, we have that $\frac{P_{t-T_D}f(X_{T_D})\1_{\{t>T_D\}}}{t}$, for $t\in(0,1)$, is uniformly bounded $\p_x$-a.s.,  with the same bound for all $x\in D$, so $\frac{\wt P^D_tf}{t}$ is uniformly bounded in $D$, too. Notice that $\lim_{t\to0}\frac{P_{t-T_D}f(X_{T_D})\1_{\{t>T_D\}}}{t}\to 0$  $\p_x$-a.s. since the numerator becomes 0 for small $t$, so by the dominated convergence we have $\wt P^D_tf(x)/t\to0$ for all $x\in D$. This proves \eqref{eq:killed semigr conv}.
		
		To prove that $f$ is in the $L^2(D)$ domain of $\Lbase$, we use \eqref{eq:semi decomp} to get
		\begin{align*}
				\left\|\frac{P^D_tf-f}{t}-L f\right\|_{L^2(D)}\le 	C\left\|\frac{P_tf-f}{t}-L f\right\|^2_{L^\infty(D)}+\left\|\frac{\wt P^D_t f}{t}\right\|_{L^2(D)}.
		\end{align*}
		The first term goes to 0 because of the uniform convergence of $\frac{P_tf-f}{t}$ to $L f$, and the second goes to 0 by using the dominated convergence of $\wt P^D_tf(x)/t$ to 0 as we explained in the previous paragraph.
		
		We prove that  $f$ is not in the $C_0(D)$ domain of $\Lbase$, for $f\not\equiv0$, $f\ge0$ or $f\le0$. Recall from the beginning of the proof that $L f$ is the only candidate for $\Lbase f$. However, $L f\not\in C_0(D)$ since $L f(x)=\int_{D}f(y)j_\phi(|x-y|)dy$, for $x$ away from $\supp f$, and does not vanish at the boundary because $j_\phi>0$.
	\end{proof}

    \subsection{Auxiliary asymptotic behaviour}
    
    \begin{lem}\label{ap:l:rho}
        The weight $\rho(\de)$ satisfies the conditions \ref{U1}, \eqref{eq: U3'} and \ref{U4}.
        
    \end{lem}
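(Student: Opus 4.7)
The plan is to split
\begin{align*}
\rho(t) \;=\; \rho_1(t) + \rho_2(t),\qquad \rho_1(t) := V(t)^2\psi(V(t)^{-2}),\qquad \rho_2(t) := V(t)\int_{V(t)}^{1}\psi(s^{-2})\,ds,
\end{align*}
and to verify each of \ref{U1}, \eqref{eq: U3'}, \ref{U4} separately for $\rho_1$ and $\rho_2$. Throughout I will lean on three ingredients already at our disposal: the weak scaling \eqref{e:wsc-V} of $V$ near zero, the upper scaling of $\psi$ from \ref{as:WSC} (which gives $\psi(V(t)^{-2}) \lesssim V(t)^{-2\od{\psi}}$), and the auxiliary estimate \eqref{eq:1802} for the integral defining $\rho_2$. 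For \ref{U1}, I would bound $\rho_1(t)V(t) \lesssim V(t)^{3-2\od{\psi}} \lesssim t^{(3-2\od{\psi})\ud{\phi}}$, which is integrable near $0$ because $\od{\psi}<1$. The bound \eqref{eq:1802} yields $\rho_2(t)V(t) \lesssim V(t)^{2+\eta-2\od{\psi}}\lesssim t^{(2+\eta-2\od{\psi})\ud{\phi}}$ for some $\eta\in(0,1)$ (absorbing any logarithmic factor in the borderline case $\ud{\psi}=1/2$ into a small power of $V$), which is likewise integrable near zero.

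For \eqref{eq: U3'}, the doubling of $V$ on $(0,1]$ immediately gives $V(s)\asymp V(t)$ whenever $c^{-1}t\le s\le c\,t$, and then the weak scaling of $\psi$ transfers this to $\psi(V(s)^{-2})\asymp\psi(V(t)^{-2})$; this takes care of $\rho_1$ directly. For $\rho_2$ the prefactor $V(s)\asymp V(t)$ is again easy, so the real issue is comparability of the integrals. Assuming without loss of generality $s\le t$, I would write
\begin{align*}
\int_{V(s)}^{1}\psi(u^{-2})\,du - \int_{V(t)}^{1}\psi(u^{-2})\,du \;=\; \int_{V(s)}^{V(t)}\psi(u^{-2})\,du \;\le\; (V(t)-V(s))\psi(V(s)^{-2}) \;\lesssim\; V(t)\psi(V(t)^{-2}),
\end{align*}
using that $u\mapsto\psi(u^{-2})$ is decreasing. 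I would then produce the matching lower bound on the whole integral by restricting to $[V(t),2V(t)]$ (valid for $t$ small enough that $2V(t)<1$), obtaining
\begin{align*}
\int_{V(t)}^{1}\psi(u^{-2})\,du \;\ge\; \int_{V(t)}^{2V(t)}\psi(u^{-2})\,du \;\gtrsim\; V(t)\psi((2V(t))^{-2}) \;\gtrsim\; V(t)\psi(V(t)^{-2}),
\end{align*}
where the last step uses the doubling of $\psi$ on $[1,\infty)$. The two displays combined show the ratio of the two integrals is bounded above and below, and hence $\rho_2(s)\asymp\rho_2(t)$.

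Finally, \ref{U4} is essentially automatic: on any $[c,\infty)$ the function $V$ is bounded (it suffices to use that the effective domain of $\rho$ in our application is $(0,\diam D)$, so $\rho$ may be extended by a bounded continuous function beyond the threshold where $V(t)\ge 1$), and then $\psi(V(t)^{-2})$ together with the tail integral are bounded too. I expect the main obstacle to be the doubling check for $\rho_2$ in \eqref{eq: U3'}, since it requires balancing a perturbative estimate for $\int_{V(s)}^{V(t)}\psi(u^{-2})\,du$ against a matching lower bound on the full tail $\int_{V(t)}^{1}\psi(u^{-2})\,du$; once one isolates the subinterval $[V(t),2V(t)]$ and invokes the weak scaling of $\psi$ this falls into place, and the remaining conditions reduce to combining scaling estimates already established earlier in the paper.
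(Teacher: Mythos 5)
Your proof is correct and follows the same overall structure as the paper's proof (splitting $\rho = \rho_1 + \rho_2$ and treating each term), but the execution differs in a way worth noting. For \ref{U1} and \ref{U4} the paper simply asserts ``clear since $\rho$ is bounded,'' while you give an explicit power-bound argument $\rho_i(t)V(t)\lesssim V(t)^{\gamma_i}$ with $\gamma_1 = 3-2\od{\psi}>0$ and $\gamma_2 = 2+\eta-2\od{\psi}>0$ (the positivity of these exponents following from $\od{\psi}<1$ and $\eta>0$), which is more concrete and arguably more transparent. For \eqref{eq: U3'} on $\rho_1$ your scaling argument coincides with the paper's. For \eqref{eq: U3'} on $\rho_2$, the routes genuinely differ: the paper changes variables $h\mapsto c_2 h$ in $\int_{c_2V(s)}^1\psi(h^{-2})\,dh$ and then controls the leftover tail $\int_{1/c_2}^1\psi(h^{-2})\,dh$ by the full integral $\int_{V(s)}^1\psi(h^{-2})\,dh$ (which requires $s\le 1/2$ and some care with the constant $c_2$); you instead decompose $\int_{V(s)}^1 = \int_{V(t)}^1 + \int_{V(s)}^{V(t)}$ and balance the perturbation $\int_{V(s)}^{V(t)}\psi(u^{-2})\,du\lesssim V(t)\psi(V(t)^{-2})$ against the matching lower bound $\int_{V(t)}^1\psi(u^{-2})\,du\gtrsim V(t)\psi(V(t)^{-2})$ obtained from the slice $[V(t),2V(t)]$. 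Your version has the advantage of giving a ratio sandwiched in $[1,1+C]$ directly, avoiding the paper's constant tracking, though it does require the case split $2V(t)<1$ (harmless, since for $t$ bounded away from $0$ the whole expression is comparable to a constant). The only slightly informal point is your treatment of \ref{U4} via the remark about the ``effective domain,'' but the paper is equally terse there, and the underlying reasoning is sound.
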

    \begin{proof}
        The properties \ref{U1} and \ref{U4} are clear since $\rho$ is bounded. We derive \eqref{eq: U3'} for each of the terms in $\rho(\de)$. For the first term, $V(\de)^2\psi(V(\de)^{-2})$ we note that for $t\le s$ it holds that
    \begin{align*}
        \frac{V(t)^2\psi(V(t)^{-2})}{V(s)^2\psi(V(s)^{-2})}\le \oa{\psi}\frac{V(t)^2}{V(s)^2}\left(\frac{V(t)^{-2}}{V(s)^{-2}}\right)^{\od{\psi}}\le \oa{\psi}\ua{\phi}^{2-2\od{\psi}}\left(\frac{t}{s}\right)^{\ud{\phi}(2-2\od{\psi})},
    \end{align*}
    and similarly for the lower bound. The property \eqref{eq: U3'} follows similarly.

    For the second term take $c>0$, and $s,t\in (0,1/2]$ such that $c^{-1}t\le s\le c\,t$. If $s\le t$, we have $c_1^{-1}(t/s)^{\ud{\phi}}\le V(t)/V(s)\le c_1 (t/s)^{\od{\phi}}$, where $c_1=c_1(\phi)>0$ so
    \begin{align*}
    	\frac{V(t)\int_{V(t)}^1\psi(h^{-2})dh}{V(s)\int_{V(s)}^1\psi(h^{-2})dh}\le c_1 c\frac{\int_{V(t)}^1\psi(h^{-2})dh}{\int_{V(s)}^1\psi(h^{-2})dh}\le c_1 c,
    \end{align*} 
    since $V(s)\le V(t)$. For the lower bound, by using \eqref{eq:WSC} for $\psi$, we get
    \begin{align*}
    	\frac{V(t)\int_{V(t)}^1\psi(h^{-2})dh}{V(s)\int_{V(s)}^1\psi(h^{-2})dh} &\ge (c_1 c)^{-1}\frac{\int_{c_2V(s)}^1\psi(h^{-2})dh}{\int_{V(s)}^1\psi(h^{-2})dh}=(c_1 c)^{-1}\frac{\int_{V(s)}^{1/c_2}\psi((c_2h)^{-2})dh}{\int_{V(s)}^1\psi(h^{-2})dh}\\
    	&\ge c_3 \left(1-\frac{\int_{1/c_2}^{1}\psi(h^{-2})dh}{\int_{V(s)}^1\psi(h^{-2})dh}\right)\ge c_4,
    \end{align*}
    since $s\le 1/2$. The case $t<s$ follows by reciprocity.
    \end{proof}

    \subsection{Proof of Theorem \ref{t:bndry-G(U)}}\label{ap:s:G(U) approx}
    The proof of Theorem \ref{t:bndry-G(U)} follows the line of  the detailed proof of \cite[Theorem 3.6]{Bio23cpaa},  to which we will often refer, see \cite[Appendix A.2]{Bio23cpaa}.
    
    Let $\epsilon=\epsilon(D) >0$ be such that the map $\Phi:\partial D\times (-\epsilon, \epsilon)\to \R^d$ defined by $\Phi(y,\delta)=y+\delta\mathbf{n}(y)$ defines a diffeomorphism to its image, cf.~\cite[Remark 3.1]{AGCV19}. Here $\mathbf{n}$ denotes the unit interior normal.  Without loss of generality assume that $\epsilon<\mathrm{diam}(D)/20$.

	\begin{lem}\label{apx:l:GDFI close-to-bdry}
		Let $\eta<\epsilon$ and assume that  conditions \ref{U1}-\ref{U4} hold true. Then on the set $\{x\in D:\delta_D(x)<\eta/2\}$ we have 
		\begin{align}\label{eq:close-to-bdry}
			\begin{split}
				\GDP\big(U(\de)\1_{\{\de<\eta\}}\big)&\asymp \frac{1}{\de V(\de)\psi(V(\de)^{-2})}\int_0^{\de}U(t)V(t)\, dt\\
                &+ V(\de)\int_{3\de/2}^{\eta}\frac{U(t)}{tV(t)\psi(V(t)^{-2})}\, dt\, + V(\de)\int_0^{\eta}U(t)V(t)\, dt. \\
			\end{split}
		\end{align}
		In particular, $\GDP\big(U(\delta_D)\1_{(\delta_D<\eta)}\big)<\infty$ on $\{x\in D:\delta_D(x)<\eta/2\}$ if and only if  the integrability condition \ref{U1} holds true. Moreover, all comparability constants depend only on $d$, $D$, $\phi$ and $\psi$ and are independent of $\eta$.
	\end{lem}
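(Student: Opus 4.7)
The plan is to adapt the proof of \cite[Lemma A.5]{Bio23cpaa} (and its precursor in \cite[Proposition 4.1]{BVW-na21}) to the current setting, where the extra factor $\psi(V(|x-y|)^{-2})^{-1}$ in the sharp Green function estimate \eqref{eq:GDP sharp} must be tracked. Fix $x\in D$ with $\de(x)<\eta/2$. Using the diffeomorphism $\Phi(y',s)=y'+s\mathbf{n}(y')$, we parameterize the set $\{y\in D:\de(y)<\eta\}$ near the boundary by $(y',s)\in \partial D\times (0,\eta)$ so that $\de(\Phi(y',s))=s$, and the Jacobian of $\Phi$ is uniformly comparable to $1$ on $\partial D\times(-\epsilon,\epsilon)$. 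We then estimate
\[
 \GDP\bigl(U(\de)\1_{\{\de<\eta\}}\bigr)(x)=\int_{\{\de(y)<\eta\}}\GDP(x,y)U(\de(y))\,dy
\]
by splitting the integration region according to the relative sizes of $\de(x)$, $\de(y)=s$, and $|x-y|$, so that on each piece the three factors appearing in \eqref{eq:GDP sharp} are unambiguously determined.

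First, I would split into three zones:
\begin{enumerate}[label=(\roman*)]
 \item $A_1=\{y:\de(y)<\de(x)/2\}$, where necessarily $|x-y|\ge \de(x)/2$ and the two renewal factors are $V(\de(x))/V(|x-y|)$ and $V(s)/V(|x-y|)$;
 \item $A_2=\{y:\de(x)/2\le \de(y)<\eta,\ |x-y|\ge 2\de(x)\}$, where analogously the two factors behave like $V(\de(x))/V(|x-y|)$ and $\min\{V(s)/V(|x-y|),1\}$;
 \item $A_3=\{y:\de(y)<\eta,\ |x-y|<2\de(x)\}$, a small ball around $x$ where both factors $\asymp 1$.
\end{enumerate}
On $A_3$ we have $\GDP(x,y)\asymp |x-y|^{-d}\psi(V(|x-y|)^{-2})^{-1}$ and $\de(y)\asymp \de(x)$, so by \ref{U3'}-type doubling of $U$ on comparable scales (consequence of \ref{U2} and \ref{U3}) and polar integration, the contribution is $\asymp U(\de(x))V(\de(x))^2$, which is absorbed into the last two terms of \eqref{eq:close-to-bdry} via the weak scaling \eqref{e:wsc-V}.

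The second step is to treat $A_1$ and $A_2$ by Fubini in the boundary coordinates $(y',s)$ and then perform the inner integration over the boundary patch, using the standard polar-type estimate $\int_{\partial D}|\Phi(y',s)-x|^{-d}\,\sigma(dy')\asymp (\de(x)\vee s)^{-1}$ on scales smaller than $\mathrm{diam}(D)$. On $A_1$, after inserting \eqref{eq:GDP sharp} and using $|x-y|\asymp \de(x)$ uniformly (since $s<\de(x)/2$), we obtain a contribution $\asymp \frac{V(\de(x))}{V(\de(x))^2 \de(x)\,\psi(V(\de(x))^{-2})}\int_0^{\de(x)/2} U(s)V(s)\,ds$, which is the first term on the right-hand side of \eqref{eq:close-to-bdry}. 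On $A_2$ one further subdivides by whether $|x-y|$ is larger or smaller than $s$: the subregion where $s\ge |x-y|/2$ contributes the integral $V(\de(x))\int_{3\de(x)/2}^{\eta}\frac{U(s)}{s V(s)\psi(V(s)^{-2})}\,ds$ after the $(y',s)$ boundary integration (this is where the reverse-doubling \ref{U3} lets us replace $\de(x)$-dependent lower cut-offs by $3\de(x)/2$ up to constants), while the subregion where $s<|x-y|$ contributes at most $V(\de(x))\int_0^{\eta} U(s)V(s)\,ds$ after exchanging the $s$ and $|x-y|$ integrations and using \ref{as:WSC} and \eqref{e:wsc-V} to integrate the $|x-y|$-variable.

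The main obstacle will be bookkeeping: ensuring that on each subregion one uses the correct form of the min in \eqref{eq:GDP sharp}, that the interchange of integrals is justified on a zone-by-zone basis, and that the constants produced depend only on $d$, $D$, $\phi$, $\psi$ and not on $\eta$. This last point follows because every exponential gain comes from the weak scaling \eqref{e:wsc-V} and \eqref{eq:WSC} for $\psi$, not from $\eta$, and \ref{U2}--\ref{U4} interact with $U$ only through ratios on comparable scales; the boundary flattening via $\Phi$ likewise gives constants depending only on $D$. Matching lower bounds are obtained by restricting each of the three integrals to the same zones (e.g.\ to $\{y: s<\de(x)/4\}$, to $\{y:\de(x)\le s<\eta,\ s<|x-y|/2\}$, and to $\{y:|x-y|\asymp \de(x)\}$) and again applying \eqref{eq:GDP sharp}, so the argument is symmetric. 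The particular case \ref{U1}$\Leftrightarrow$finiteness then reads off directly from \eqref{eq:close-to-bdry}, since the first term on the right-hand side is dominated by the third via the weak scaling of $V$.
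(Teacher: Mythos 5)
Your overall strategy—insert the sharp Green estimate \eqref{eq:GDP sharp}, then split the region $\{\de(y)<\eta\}$ into zones where the two $\min$ factors and the singular $|x-y|^{-d}\psi(V(|x-y|)^{-2})^{-1}$ piece are unambiguous, then integrate in flattened boundary coordinates—is the right one and is what the paper does. But your three-zone decomposition does not actually achieve the stated goal of making the factors unambiguous, and this breaks the core bookkeeping step in two places.

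First, on $A_1=\{y:\de(y)<\de(x)/2\}$ you assert ``$|x-y|\asymp\de(x)$ uniformly (since $s<\de(x)/2$).'' The hypothesis $\de(y)<\de(x)/2$ gives only the lower bound $|x-y|\ge\de(x)/2$; it gives no upper bound, since $y$ can lie in the boundary collar on the far side of $D$. For such $y$ the factor $V(\de(x))/V(|x-y|)$ drops well below $1$ and the kernel decays, producing a contribution of size roughly $V(\de(x))\int_0^{\de(x)/2}U(s)V(s)\,ds$ rather than the first term of \eqref{eq:close-to-bdry}. The paper avoids this by cutting $A_1$ into a near part $D_3=\{\de(y)<\de(x)/2\}\cap B(x,r_0)$, which produces the first term, and routing the far part into $D_2=\{\de(y)<\eta\}\setminus B(x,r_0)$, which produces the third term.

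Second, on $A_3=\{y:\de(y)<\eta,\,|x-y|<2\de(x)\}$ you claim $\de(y)\asymp\de(x)$ and that both $\min$-factors are $\asymp1$. Since $\de(x)<\eta/2$, the ball $B(x,2\de(x))$ intersects the boundary collar, so $A_3$ contains points with $\de(y)$ arbitrarily small, where $V(\de(y))/V(|x-y|)$ is not comparable to $1$. To get the clean ``both factors $\asymp1$'' reduction one must take the ball of radius $\de(x)/2$ (the paper's $D_1$), where $\de(y)\in(\de(x)/2,\,3\de(x)/2)$ is forced; the remaining annular piece with $\de(y)\asymp\de(x)$ but $|x-y|\ge\de(x)/2$ is handled separately as $D_5$. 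As written, the boundary slab portion of your $A_3$ is double-counted in $A_1$ and the annulus portion with comparable distances is unaccounted for.

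In short, the idea is correct and essentially the paper's, but the proposed three-region split must be refined to five regions (or equivalent) before the factor-by-factor estimates you outline are valid: a small ball $B(x,\de(x)/2)$, a far region $D\setminus B(x,r_0)$, and three near-$x$ collar regions distinguished by $\de(y)\lessgtr\de(x)$. Everything else—the boundary parameterization $\Phi$, Fubini in $(y',s)$, the use of \ref{U2}--\ref{U4} and of the weak scalings \eqref{e:wsc-V}, \eqref{eq:WSC}, and the observation that the comparability constants do not see $\eta$—is fine and matches the paper.
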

    \begin{proof}
    	We repeat the main steps of the procedure done in \cite[Lemma A.3]{Bio23cpaa}. Fix some  $r_0<\epsilon$ and fix $x\in D$ as in the statement. Define
        \begin{align*}
        	\begin{split}
        		D_1&= B(x, \delta_D(x)/2)\\
        		D_2&=\{y: \delta_D(y)<\eta\}\setminus B(x, r_0)\\
        		D_3&=\{y: \delta_D(y)<\delta_D(x)/2\}\cap B(x, r_0)\\
        		D_4&= \{y:3\delta_D(x)/2 < \delta_D(y)<\eta\}\cap B(x,r_0)\\
        		D_5&=\{y: \delta_D(x)/2<\delta_D(y)<3\delta_D(x)/2\}\cap (B(x, r_0)\setminus B(x, \delta_D(x)/2)).
        	\end{split}
        \end{align*}
        Thus, we have that
        $$
        \GDP\big(U(\de)\1_{\{\de<\eta\}}\big)(x)=\sum_{j=1}^5 \int_{D_j}\GDP(x,y)U(\de(y))\, dy =:\sum_{j=1}^5 I_j.
        $$ 
        
        \noindent
        {\bf Estimate of $I_1$:} By repeating the steps as in \cite[Estimate of $I_1$ in Lemma A.3]{Bio23cpaa}, we get
        \begin{equation*}
        	I_1\asymp \frac{U(\de(x))}{\psi(V(\de(x))^{-2})}\lesssim \frac{1}{\de(x)V(\de(x))\psi(V(\de(x))^{-2})}\int_0^{\de(x)}U(t)V(t)\,dt.
        \end{equation*}
        
        \noindent
        {\bf Estimate of $I_2$:} By imitating \cite[Estimate of $I_2$ in Lemma A.3]{Bio23cpaa}, we get
        \begin{equation*}
        	I_2\asymp V(\de(x)) \int_0^{\eta}U(t)V(t)\, dt\, .
        \end{equation*}
        
        For $I_3-I_5$, in addition to \cite[Eq. (A.9)]{Bio23cpaa}, we also need
        \begin{align*}
        	\int_0^a\frac{s^{d-2}}{(1+s)^{d+4}}\asymp 1,\quad a\ge 1,
        \end{align*}
        since the integral can be calculated explicitly.
        
        \noindent 
        {\bf Estimate of $I_3$:} By following the procedure in \cite[Estimate of $I_3$ in Lemma A.3]{Bio23cpaa}, we can prove that  
        \begin{equation*}
        	I_3\, \asymp\,\frac{1}{\de(x)V(\de(x))\psi(V(\de(x))^{-2})} \int_0^{\de(x)}U(t)V(t)\, dt.
        \end{equation*}
        
        \noindent
        {\bf Estimate of $I_4$:} By applying the similar calculations as in $I_3$, see \cite[Estimate of $I_4$ in Lemma A.3]{Bio23cpaa}, it holds that
        \begin{equation*}
        	I_4\,\asymp\, V(\de(x))\int_{3\de(x)/2}^{\eta} \frac{U(t)}{tV(t)\psi(V(t)^{-2})}\, dt\,.
        \end{equation*}
        
        	\noindent
        {\bf Estimate of $I_5$:}  As in \cite[Estimate of $I_5$ in Lemma A.3]{Bio23cpaa}, we can see that
        \begin{equation*}
        	I_5 \,\lesssim\, \frac{U(\de(x))}{\psi(V(\de(x))^{-2})}\,\lesssim\, \frac{1}{\de(x)V(\de(x))\psi(V(\de(x))^{-2})}\int_0^{\de(x)}U(t)V(t)\, dt\, .
        \end{equation*}
        
        Since $I_1 + I_3\lesssim I_3$, the proof is finished.

    \end{proof}

        \begin{lem}\label{apx:l:away-from-bdry}
		Let $\eta<\epsilon$ and assume that  conditions \ref{U1}-\ref{U4} hold true. There exists $c(\eta)>0$ such that for any $x\in D$ satisfying $\de(x)\ge\eta/2$, 
		\begin{equation*}
			\GDP\big(U(\de)\1_{\{\de<\eta\}}\big)(x)\le c(\eta)\, .
		\end{equation*}
	\end{lem}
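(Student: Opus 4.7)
My plan is to split the Green potential integral according to whether $y$ is close to or far from $x$. Fix $x \in D$ with $\de(x) \ge \eta/2$, and decompose
\[
\GDP\bigl(U(\de)\1_{\{\de<\eta\}}\bigr)(x) = I_1(x) + I_2(x),
\]
where $I_1(x)$ is the integral over $B(x,\eta/4)$ and $I_2(x)$ the integral over $D \setminus B(x,\eta/4)$. I will bound each piece by a constant depending only on $\eta$ (and $d$, $D$, $\phi$, $\psi$), which suffices since $\eta$ is fixed throughout the statement.

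For the inner piece $I_1(x)$, the assumption $\de(x) \ge \eta/2$ combined with $|x-y| \le \eta/4$ gives, via the triangle inequality, $\de(y) \ge \de(x) - \eta/4 \ge \eta/4$. Hence the effective range of $\de(y)$ is the strip $[\eta/4, \eta)$, on which $U$ is uniformly bounded by some $c_1(\eta)$ thanks to properties \ref{U2} and \ref{U4}. In this regime we also have $V(\de(x))/V(|x-y|) \ge 1$ and $V(\de(y))/V(|x-y|) \ge 1$, so the sharp Green-function estimate \eqref{eq:GDP sharp} reduces to
\[
\GDP(x,y) \lesssim \frac{1}{|x-y|^d \psi(V(|x-y|)^{-2})}.
\]
The weak-scaling assumptions \eqref{eq:WSC} on $\psi$ and \eqref{e:wsc-V} on $V$ ensure that for small $r$ one has $\psi(V(r)^{-2})^{-1} \lesssim r^{\mu}$ for some $\mu > 0$, so passing to polar coordinates produces a convergent integral $\int_0^{\eta/4} r^{\mu-1}\, dr$, yielding $I_1(x) \le c_2(\eta)$.

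For the outer piece $I_2(x)$, the lower bound $|x-y| \ge \eta/4$ makes every factor depending on $|x-y|$ bounded below by a positive constant that depends only on $\eta$ and $\diam D$, while $V(\de(x)) \le V(\diam D)$ is a uniform global bound. Reapplying \eqref{eq:GDP sharp} then gives $\GDP(x,y) \le c_3(\eta)\, V(\de(y))$, whence
\[
I_2(x) \le c_3(\eta) \int_{\{\de(y)<\eta\}} V(\de(y))\, U(\de(y))\, dy.
\]
Because $D$ is $C^{1,1}$, the tubular-neighborhood parametrization $\Phi$ introduced before Lemma \ref{apx:l:GDFI close-to-bdry} has Jacobian bounded above and below on $\partial D \times (0,\epsilon)$, reducing the last integral to a constant multiple of $\int_0^\eta V(t)\, U(t)\, dt$, which is finite by \ref{U1}. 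Combining with the estimate for $I_1$ gives $\GDP\bigl(U(\de)\1_{\{\de<\eta\}}\bigr)(x) \le c(\eta)$, as claimed.

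No serious obstacle is anticipated; the argument is a routine application of the sharp estimate \eqref{eq:GDP sharp} and the weak scaling of $\phi$ and $\psi$. The only point deserving care is ensuring that the diagonal singularity of $\GDP(x,y)$ in $I_1$ is integrable, which is precisely where the positivity of $\ud{\phi}$ and $\ud{\psi}$ enters to guarantee the strictly positive exponent $\mu$ above.
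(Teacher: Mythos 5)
Your argument is correct, and it reaches the same conclusion by a genuinely different decomposition. You split by distance to $x$ (the ball $B(x,\eta/4)$ versus its complement), whereas the paper splits by distance to the boundary ($D_1=\{\de(y)<\eta/4\}$ versus $D_2=\{\eta/4\le\de(y)<\eta\}$). The two splittings overlap non-trivially: because $\de(x)\ge\eta/2$ forces $|x-y|\ge\eta/4$ on $\{\de(y)<\eta/4\}$, the paper's $D_1$ is contained in your far region, while your near region sits inside the paper's $D_2$. For the diagonal singularity, you integrate $r^{d-1}\cdot(r^d\psi(V(r)^{-2}))^{-1}$ directly and use $\psi(V(r)^{-2})^{-1}\lesssim r^{2\ud\phi\ud\psi}$ from the scaling conditions to see convergence; the paper instead absorbs that singularity into the boundedness of $\GDP\1$ after capping $U$ by $U(\eta/4)$ on $D_2$. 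Your route is a bit more explicit about the rate at which the kernel blows up on the diagonal, while the paper's is slightly shorter because it reuses the already-established bounds for $\GDP\1$ and $\GDP V(\de)$. Two minor remarks that do not affect correctness: (i) for the upper bound in $I_1$, the two factors $\frac{V(\de(x))}{V(|x-y|)}\wedge 1$ and $\frac{V(\de(y))}{V(|x-y|)}\wedge 1$ are always $\le 1$, so the observation that they equal $1$ in this regime, while true, is not actually needed; (ii) the uniform bound on $U$ over $[\eta/4,\eta)$ follows from \ref{U4} alone, without appealing to \ref{U2}.
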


    \begin{proof}
        Here we employ the same strategy as in \cite[Lemma A.4]{Bio23cpaa}, so we repeat only the main steps. Take $x\in D$ and split the set $\{y:\de(y)< \eta\}$ into 
        \begin{align*}
        	D_1=\{y:\, \de(y)<\eta/4\},\qquad
        	D_2=\{y:\, \eta/4 \le \de(y) <\eta\},
        \end{align*}
        so
        $$
        \GDP\big(U(\delta_D)\1_{(\delta_D<\eta)}\big)(x)=\sum_{j=1}^2 \int_{D_j}\GDP(x,y)U(\delta_D(y))\, dy =:\sum_{j=1}^2 J_j.
        $$
        {\bf Estimate of $J_1$:} 
        By repeating the steps from  \cite[Estimate of $J_1$ in Lemma A.4]{Bio23cpaa}, we get
        \begin{equation*}
        	J_1\lesssim\frac{1}{V(\eta)^2\psi(V(\eta)^{-2})} \int_0^{\eta}U(t)V(t)\, dt.
        \end{equation*}
        
        	\noindent
        {\bf Estimate of $J_2$:}
       	The same calculations as in \cite[Estimate of $J_2$ in Lemma A.4]{Bio23cpaa} yield
        \begin{equation*}
        	J_2\lesssim U(\eta/4).
        \end{equation*}
        
        The estimates $J_1$ and $J_2$ substantiate the claim.       
    \end{proof}

    \begin{proof}[{Proof of Theorem \ref{t:bndry-G(U)}}]
    The proof follows the lines of the proof of \cite[Theorem 3.6]{Bio23cpaa}. Fix some $\eta<\epsilon$. On $\{\de(y)\ge \eta\}$, $U$ is bounded by the assumption \ref{U4}, so
		\begin{equation}\label{eq:gpe-b}
			\GDP \big(U(\de)\1_{\{\de\ge \eta\}}\big)(x)\lesssim\GDP V(\de)(x)\asymp V(\de(x))\,,\quad x\in D,
		\end{equation}
		where the last step estimate follows from Lemma \ref{l:Green invariant}. On $\{\de(x)\ge\eta/2\}$ we have
		\begin{align*}
			\GDP \big(U(\de)\1_{\{\de\ge \eta\}}\big)(x)&\gtrsim\int_{B(x,\eta/4)}\frac{1}{|x-y|^{d}\psi(V(|x-y|)^{-2})}dy\asymp \frac{1}{\psi(V(\eta)^{-2})}\gtrsim 1.
		\end{align*}
		On $\{\de(x)\le\eta/2\}$, since $|x-y|\ge \eta/2$, it holds that
		\begin{align*}
			\GDP \big(U(\de)\1_{\{\de\ge \eta\}}\big)(x)&\gtrsim \int_{\{\de(y)\ge \eta\}}\frac{V(\de(x))V(\de(y))}{|x-y|^{d}V(|x-y|)^2\psi(V(|x-y|)^{-2})}dy\gtrsim V(\de(x)).
		\end{align*}
		Note that, $\de(x)\asymp 1$ on $\{\de(x)\ge\eta/2\}$ which yields $\GDP \big(U(\de)\1_{\{\de\ge \eta\}}\big)(x)\asymp V(\de(x))$ in $D$. In the case $\de(x)\ge \eta/2$, by Lemma \ref{apx:l:away-from-bdry}, we get 
		that $\GDP(U(\de))(x)\asymp 1$, and the right-hand side of \eqref{eq:G(U) sharp bound} is also comparable to 1.
		
        Let us deal now with $\de(x)<\eta/2$. Since we treat $\eta$ as a constant, we can see that $\eta$ in \eqref{eq:close-to-bdry} can be replaced by $\diam D$. The claim now follows from Lemma \ref{apx:l:GDFI close-to-bdry} and \eqref{eq:gpe-b}.
	\end{proof}

\subsection*{Acknowledgment}
This research was partly supported by the Croatian Science Foundation under the project IP-2022-10-2277.

The first-named author also acknowledges financial support under the National Recovery and Resilience Plan (NRRP), Mission 4, Component 2, Investment 1.1, Call for tender No. 104 published on 2.2.2022 by the Italian Ministry of University and Research (MUR), funded by the European Union – NextGenerationEU– Project Title “Non–Markovian Dynamics and Non-local Equations” – 202277N5H9 - CUP: D53D23005670006 - Grant Assignment Decree No. 973 adopted on June 30, 2023, by the Italian Ministry of University and Research (MUR).

	\bibliographystyle{abbrv}
	\bibliography{bibliography}

	\bigskip
	
	\noindent{\bf Ivan Bio\v{c}i\'c}
	
	\noindent Department of Mathematics, Faculty of Science, University of Zagreb, Zagreb, Croatia,
	
	\noindent Email: \texttt{ibiocic@math.hr}
    
        \noindent Department of Mathematics ``Giuseppe Peano", University of Turin, Turin, Italy,
	
	\noindent Email: \texttt{ivan.biocic@unito.it}
	
	\bigskip
	
	\noindent{\bf Vanja Wagner}
	
	\noindent Department of Mathematics, Faculty of Science, University of Zagreb, Zagreb, Croatia,
	
	\noindent Email: \texttt{wagner@math.hr}

\end{document}